\numberwithin{equation}{section}
\DeclareMathOperator*{\esssup}{ess\,sup}
\DeclareMathOperator*{\essinf}{ess\,inf}
\DeclareMathOperator*{\argmin}{arg\,min}
\newtheorem{theorem}{Theorem}[section]
\newtheorem{lemma}[theorem]{Lemma}
\newtheorem{proposition}[theorem]{Proposition}
\theoremstyle{definition}
\newtheorem{definition}[theorem]{Definition}
\theoremstyle{remark}
\newtheorem{remark}[theorem]{Remark}
\newcommand{\leref}{Lemma~\ref}
\newcommand{\deref}{Definition~\ref}
\newcommand{\prref}{Proposition~\ref}
\newcommand{\thref}{Theorem~\ref}
\title[]{On controller-stopper problems with jumps and their applications to indifference pricing of American options}\thanks{We would like to thank the anonymous referees and the AE for their insightful comments which helped us improve our paper. }
\author[]{Erhan Bayraktar} \thanks{This research was supported in part by the National Science Foundation under grants DMS 0955463 and DMS 1118673.}  
\address{Department of Mathematics, University of Michigan}
\email{erhan@umich.edu}
\author[]{Zhou Zhou}
\address{Department of Mathematics, University of Michigan}
\email{zhouzhou@umich.edu}
\date{\today}
\keywords{controller-stopper problems, jumps, decomposition, indifference pricing, American options, RBSDEs.}
\begin{document}
\maketitle
\begin{abstract}
We consider controller-stopper problems in which the controlled processes can have jumps. The global filtration is represented by the Brownian filtration, enlarged by the filtration generated by the jump process. We assume that there exists a conditional probability density function for the jump times and marks given the filtration of the Brownian motion and decompose the global controller-stopper problem into controller-stopper problems with respect to the Brownian filtration, which are determined by a backward induction. We apply our decomposition method to indifference pricing of American options under multiple default risk. The backward induction leads to a system of reflected backward stochastic differential equations (RBSDEs). We show that there exists a solution to this RBSDE system and that the solution provides a characterization of the value function.
\end{abstract}
\section{Introduction}
 The problem of pricing American options and the very closely related stochastic control problem of a controller and stopper either cooperating or playing a zero-sum game has been analyzed extensively for continuous processes. In particular,  \cite{Karatzas1} considers the super-hedging problem; \cite{MR2178425}, \cite{MR2172784}, \cite{MR2435857}, and \cite{MR2928345}  consider the controller-stopper problems, and \cite{Sircar1} resolves the  indifference pricing problem using the results of \cite{MR2178425}. We will consider the above problems in the presence of jumps in the state variables. 
 
The stochastic control problems in the above setup can be solved by Hamilton-Jacobi-Bellman integro-differential equations in the Markovian setup, or by Reflected Backward Stochastic Differential Equations (RBSDEs) with jumps, generalizing the results of \cite{2012arXiv1201.2690J}, which we will call the \emph{global approach}. 
We prefer to use an alternative approach in which we convert 
the problem with jumps into a sequence of problems without jumps \`{a} la \cite{erhan3}, which uses this result for  linear pricing of American options, and \cite{pham1} which uses this approach to solve indifference pricing problems for European-style optimal control problems with jumps under a conditional density hypothesis. 

One may wonder what the local approach we propose brings over the global approach in financial applications.
Indeed, in the second part of the paper, where we give an application of the decomposition results of controller-stopper games to indifference pricing of American options, one may use the methods in \cite{Schweizer1} and \cite{Sircar 1} to convert the original problem into a dual problem over martingale measures which could be represented as a solution of an
 RBSDE with jumps or integro-PDEs for a non-linear free boundary problem. Compared to this global approach, what we propose has several advantages:
 
 (a) Our method tells us how to behave optimally between jumps. For instance, our stopping times are not hitting times. They are hitting times of certain levels between jumps. But these levels change as the jumps occur. This tells us how the investor reacts to defaults and changes her stopping strategies. However, the global method can provide little insight into the impact of jumps on the optimal strategies.

(b) Like in \cite{pham2} and \cite{pham1}, our decomposition approach allows us to formulate the optimal investment problems where the portfolio constraint set can be updated after each default time, depending on the past defaults, which is financially relevant. Nevertheless, in the global approach the admissible set of strategies has to be fixed in the beginning.

(c) The decomposition result is useful in the analysis of Backward stochastic differential equations (BSDEs) with jumps. For example, \cite{Kharroubi1} uses the decomposition result of \cite{pham1} to construct a solution to BSDEs with jumps. Similar decomposition results were used earlier by \cite{MR1283589} in understanding the structure of control problems in a piece-wise deterministic setting. Also, see \cite{MR2260062} for example for the application of the decomposition idea to the solution of a quickest change detection problem.


Following the setup in  \cite{pham2} and \cite{pham1} we also assume that there are at most $n$ jumps. Assuming the number of jumps is finite is not restrictive for financial modeling purposes. We think of jumps representing default events. The jumps in our framework have both predictable and totally inaccessible parts. That is, we are in the hybrid default modeling framework considered by \cite{El2}, \cite{Pham4} and \cite{pham1} and following these papers we make the assumption that the joint distribution of jump times and marks has a conditional density. For a more precise formulation see the standing assumption in Section~\ref{sec:expectation}.

In this jump-diffusion model, we give a decomposition of the controller-stopper problem into controller-stopper problems with respect to the Brownian filtration, which are determined by a backward induction. We apply this decomposition method to indifference pricing of American options under multiple jump risk, extending the results of \cite{pham1}. The solution of this problem leads to a system of reflected backward stochastic differential equations (RBSDEs). We show that there exists a solution to this RBSDE system and the solution provides a characterization of the value function, which can be thought of as an extension of \cite{Ying1}.

Our first result, see Theorem~\ref{theorem 3-1} and Proposition~\ref{prop 3-3}, is a decomposition result for stopping times of the global filtration (the filtration generated by the Brownian motion and jump times and marks).
 Next, in Section~\ref{sec:expectation}, 
 we show that the expectation of an optional process with jumps can be computed by a backward induction, where each step is an expectation with respect to the Brownian filtration. In Section~\ref{sec:control}, we consider the controller-stopper problems with jumps and decompose the original problem into controller-stopper problems with respect to the Brownian filtration. Finally, we apply our decomposition result to obtain the indifference buying/selling price of American options with jump/default risk in Section~\ref{sec:indifference-pricing} and characterize the optimal trading strategies and the optimal stopping times in Theorem~\ref{theorem 4-4} and Theorem~\ref{theorem 4-6}, which resolves a saddle point problem, which is an important and difficult problem in the controller-stopper games.

Since we work with optional processes (because our optimization problem contains a state variable with unpredictable jumps), we can not directly rely on the decomposition result of \cite{Jeulin1} in Lemma 4.4 and Remark 4.5, or the corresponding result in\cite{MR519998} (which is for predictable processes and the filtrations involved are right-continuous) from the classical theory of enlargement of filtrations. (See also Chapter 6 of \cite{MR2020294} for an exposition of this theory in English.)  It is well known in the theory of enlargement of filtrations that for a right-continuous enlargement, a decomposition for optional process is not true in general; the remark on page 318 of \cite{MR509204} gives a counter example. See also the introduction of the recent paper by \cite{Song1}. This is because in the case of optional processes the monotone class argument used in the proof of Lemma 4.4 in  \cite{Jeulin1} does not work for the right-continuously enlarged filtration. The phenomenon described here is in fact a classical example demonstrating the well-known exchangeability problem between intersection and the supremum of $\sigma$-algebras.  In our problem we work in an enlarged filtration which is not right-continuous. This allows to get optional decomposition results with respect to the enlarged filtration. On the other hand, since the enlargement is not right-continuous, no classical stochastic calculus tools can be used to solve the problem anymore. Therefore, our approach gives an important contribution to the stochastic optimization literature.  Also, as opposed to  \cite{Jeulin1} we consider a progressive enlargement with several jumps and jump marks. On the other hand, our decomposition of the controller-stopper problems into control-stopper problems in the smaller filtration can be viewed as a non-linear extension of the classical decomposition formulas due to Jeulin \cite{Jeulin1}.

In the rest of this section we will introduce the probabilistic setup and notation that we will use in the rest of the paper.

\subsection{Probabilistic setup}
As in \cite{pham1}, we start with $(\Omega,\mathbb{F},\mathbb{P})$ corresponding to the jump-free probability space, where $\mathbb{F}=(\mathcal{F}_t)_{t=0}^\infty$ is the filtration generated by the Brownian motion, satisfying the usual conditions. We assume that there are at most $n$ jumps. Define $\Delta_0=\emptyset$ and
$$\Delta_k=\left\{(\theta_1, \dotso,\theta_k): \ 0\leq\theta_1\dotso\leq\theta_k\right\},\ \ \ k=1,\dotso, n,$$
which represents the space of first $k$ jump times. For $k=1,\dotso, n$, let $e_k$ be the $k$-th jump mark taking values in some Borel subset $E$ of $\mathbb{R}^{\hat d}$.  For $k=0,\dotso,n$, let $\mathcal{D}^{k}$ be the filtration generated by the first $k$ jump times and marks, i.e.,
$$\mathcal{D}^{k}_t=\vee_{i=1}^{k}\big(\sigma(1_{\{\zeta_i\leq s\}},\ s\leq t)\vee\sigma(\ell_i1_{\{\zeta_i\leq s\}},\  s\leq t)\big).$$
Let
$$\mathcal{G}^k=\mathcal{F}\vee\mathcal{D}^k, \ \ k=0,\dotso,n.$$
Denote by $\mathbb{G}^k=(\mathcal{G}_t^k)_{t=0}^\infty$ for $k=0,\dotso,n$, and $\mathbb{G}=\mathbb{G}^n$. (One should note that these filtrations are not necessarily right continuous. When we look at the supremum of two $\sigma$ algebras, the resulting $\sigma$ algebra does not have to be right continuous. This is due to the famous exchangeability problem between the intersection and the supremum of two $\sigma$ algebras.) Then $(\Omega, \mathbb{G}^k, \mathbb{P})$ is the probability space including at most the first $k$ jumps, $k=0,\dotso,n$. Let $(\Omega, \mathbb{G}, \mathbb{P})=(\Omega, \mathbb{G}^n, \mathbb{P})$ which we refer to as the global probability space. Note that for $k=0,\dotso,n$, we may characterize each element in $\Omega$ as $(\omega_1,\theta_1,\dotso,\theta_k,e_1,\dotso,e_k)$, when the random variable we consider is $\mathcal{G}_{\infty}^k$-measurable, where $\omega_1$ is viewed as the Brownian motion argument and $\mathcal{G}_{\infty}^k=\cup_{t=0}^{\infty}\mathcal{G}_t^k$, see page 76 in \cite{texterhan}.

Next we will introduce some notation that will be used in the rest of the paper. 

\subsection{Notation}

\begin{itemize}
\item For any $(\theta_1,\dotso,\theta_k)\in \Delta_k, \ (\ell_1,\dotso,\ell_k)\in E^k$, we denote by
$$\pmb\theta_k=(\theta_1,\dotso,\theta_k),\ \ \ \pmb\ell_k=(\ell_1,\dotso,\ell_k),\ \ \ \ \  k=1,\dotso,n.$$
We also denote by $\pmb\zeta_k=(\zeta_1,\dotso,\zeta_k)$, and $\pmb\ell_k=(\ell_1,\dotso, \ell_k)$. From now on, for $k=1,\dotso,n$, we use $\theta_k,\pmb\theta_k,e_k,\pmb e_k$ to represent given fixed numbers or vectors, and $\zeta_k,\pmb\zeta_k,\ell_k,\pmb\ell_k$ to represent random jump times or marks.  
\item $\mathcal{P}_\mathbb{F}$ is the $\sigma$-algebra of $\mathbb{F}$-predictable measurable subsets on $\mathbb{R}_+\times\Omega$, i.e., the $\sigma$-algebra generated by the left-continuous $\mathbb{F}$-adapted processes. 
\item $\mathcal{P}_{\mathbb{F}}(\Delta_k,E^k)$ is the set of indexed $\mathbb{F}$-predictable processes $Z^k(\cdot)$, i.e.,  the map $(t,\omega,\pmb\theta_k,\pmb\ell_k)\rightarrow Z_t^k(\omega,\pmb\theta_k,\pmb\ell_k)$ is $\mathcal{P}_\mathbb{F}\otimes\mathcal{B}(\Delta_k)\otimes\mathcal{B}(E^k)$-measurable, for $k=1,\dotso,n$. We also denote $\mathcal{P}_\mathbb{F}$ as $\mathcal{P}_{\mathbb{F}}(\Delta_0,E^0)$.
\item $\mathcal{O}_\mathbb{F}$(resp.$\mathcal{O}_\mathbb{G}$) is the $\sigma$-algebra of $\mathbb{F}$(resp.$\mathbb{G}$)-optional measurable subsets on $\mathbb{R}_+\times\Omega$, i.e., the $\sigma$-algebra generated by the right-continuous $\mathbb{F}$(resp.$\mathbb{G}$)-adapted processes.
\item $\mathcal{O}_{\mathbb{F}}(\Delta_k,E^k)$ is the set of indexed $\mathbb{F}$-adapted processes $Z^k(\cdot)$, i.e.,  the map $(t,\omega,\pmb\theta_k,\pmb\ell_k)\rightarrow Z_t^k(\omega,\pmb\theta_k,\pmb\ell_k)$ is $\mathcal{O}_\mathbb{F}\otimes\mathcal{B}(\Delta_k)\otimes\mathcal{B}(E^k)$-measurable, for $k=1,\dotso,n$.  We also denote $\mathcal{O}_\mathbb{F}$ as $\mathcal{O}_{\mathbb{F}}(\Delta_0,E^0)$.
\item For any $\mathcal{G}_{\infty}^k$-measurable random variable $X$, we sometimes denote it as  $X=X(\omega_1,\pmb\zeta_k,\pmb\ell_k)=X(\pmb\zeta_k,\pmb\ell_k)$. Given $\pmb\zeta_k=\pmb\theta_k,\ \pmb\ell_k=\pmb e_k$, we denote $X$ as $X=X(\omega_1,\pmb\theta_k,\pmb\ell_k)=X(\pmb\theta_k,\pmb\ell_k)$. Similar notations apply for any $\mathbb{G}^k$-adapted process $(Z_t)_{t\geq 0}$ and its stopped version $Z_{\tau}$, where $\tau$ is a $\mathbb{G}^k$-stopping time.
\item For $T\in[0,\infty]$, $\Delta_k(T):=\Delta_k\cap[0,T]^k$.
\item$\displaystyle \mathcal{S}_c^{\infty}[t,T]:=\Big\{Y:\ \mathbb{F}\text{-adapted continuous,}\ ||Y||_{\mathcal{S}_c^\infty[t,T]}:=\displaystyle\esssup_{(s,\omega)\in[t,T]\times\Omega}|Y_s(\omega)|<\infty\Big\}.$
\item$\displaystyle \mathcal{S}_c^\infty(\Delta_k(T),E^k):=\Big\{Y^k\in\mathcal{O}_{\mathbb{F}}(\Delta_k,E^k):\ Y^k \text{ is continuous, and}\ ||Y^k||_{\mathcal{S}_c^\infty(\Delta_k(T),E^k)}:=$ \\
\text{ }\ \ \ \ \ \ \ \ \ \ \ \ \ \ \ \ \ \ \ \ $\displaystyle\sup_{(\pmb\theta_k,\pmb e_k)\in\Delta_k(T)\times E^k}||Y^k(\pmb\theta_k,\pmb e_k)||_{\mathcal{S}_c^\infty[\theta_k,T]}<\infty\Big\}, \ \ \ k=0,\dotso,n.$
\item${\bf L}_W^2[t,T]:=\Big\{Z:\ \mathbb{F}\text{-predictable,} \ \mathbb{E}\left[\int_t^T|Z_s|^2ds\right]<\infty\Big\}.$
\item${\bf L}_W^2(\Delta_k(T),E^k):=\Big\{Z^k\in\mathcal{P}_{\mathbb{F}}(\Delta_k,E^k):\ \mathbb{E}\left[\int_{\theta_k}^T|Z_t^k(\pmb\theta_k,\pmb e_k)|^2dt\right]<\infty,\ \ \forall(\pmb\theta_k,\pmb e_k)\in$\\
\text{ }\ \ \ \ \ \ \ \ \ \ \ \ \ \ \ \ \ \ \ \ $\displaystyle\Delta_k(T)\times E^k\Big\},\ \ \ k=0,\dotso,n.$
\item$\displaystyle\mathbf{A}[t,T]:=\big\{K:\ \mathbb{F}\text{-adapted continuous increasing,}\ \ K_t=0, \ \mathbb{E}K_T^2<\infty\big\},\ k=0,\dotso,n.$
\item$\displaystyle\mathbf{A}(\Delta_k(T),E^k):= \big\{K^k: \ \forall (\pmb{\theta}_k,\pmb{e}_k)\in\Delta_k(T)\times E^k,\ \ K^k(\pmb{\theta}_k,\pmb{e}_k) \ \in\mathbf{A}[\theta_k,T]\big\},\ k=0,\dotso,n.$
\item We use \textbf{eq}$(H,f)_{s\leq t\leq T}$ to represent the RBSDE
\begin{numcases}{}
Y_t=H_T-\int_t^T f(r,Y_r,Z_r)dr+\int_t^TZ_rdW_r+(K_T-K_t), \ \ \ \ \  s\leq t\leq T, \notag\\
 Y_t\geq H_t,\ \ \ \ \ s\leq t\leq T, \notag\\
  \int_s^T(Y_t- H_t)dK_t=0,  \notag
  \end{numcases}
and \textbf{EQ}$(\mathcal{H},\mathfrak{f})_{s\leq t\leq T}$ to represent the RBSDE
\begin{numcases}{}
\mathcal{Y}_t=\mathcal{H}_T+\int_t^T \mathfrak{f}(r,\mathcal{Y}_r,\mathcal{Z}_r)dr-\int_t^T\mathcal{Z}_rdW_r+(\mathcal{K}_T-\mathcal{K}_t), \ \ \ \ \  s\leq t\leq T, \notag\\
 \mathcal{Y}_t\geq \mathcal{H}_t,\ \ \ \ \ s\leq t\leq T, \notag\\
  \int_s^T(\mathcal{Y}_t- \mathcal{H}_t)d\mathcal{K}_t=0. \notag
  \end{numcases}
\end{itemize}

\section{Decomposition of $\mathbb{G}$-stopping times}

\thref{theorem 3-1} and Proposition~\ref{prop 3-3}, on the decomposition $\mathbb{G}$-stopping times, are the main results of this section. 

\begin{theorem}\label{theorem 3-1}
 $\tau$ is a $\mathbb{G}$-stopping time if and only if it has the decomposition:
\begin{equation}\label{2-3} 
\tau=\tau^0 1_{\{\tau^0<\zeta_1\}}+\sum_{k=1}^{n-1}\tau^k(\pmb\zeta_k,\pmb\ell_k) 1_{\{\tau^0\geq\zeta_1\}\cap\dotso\cap\{\tau^{k-1}\geq\zeta_k\}\cap\{\tau^k <\zeta_{k+1}\}}+\tau^n(\pmb\zeta_n,\pmb\ell_n) 1_{\{\tau^0\geq\zeta_1\}\dotso\cap\{\tau^{n-1}\geq\zeta_n\}},\end{equation}
for some $(\tau^0,\dotso,\tau^n)$, where $\tau^0$ is an $\mathbb{F}$-stopping time, and $\tau^k(\pmb\zeta_k,\pmb\ell_k)$ is a $\mathbb{G}^k$-stopping time satisfying 
\begin{equation}\label{eq:tgtjt}
\tau^k(\pmb\zeta_k,\pmb\ell_k)\geq\zeta_k, \; k=1,\dotso,n.
\end{equation}
\end{theorem}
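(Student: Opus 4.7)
The plan is to prove both directions separately; the ``if'' direction reduces to a routine measurability check, while the ``only if'' direction is the main content and will be proved by iterating a single-step decomposition lemma.

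For the ``if'' direction, I would fix $t \ge 0$ and intersect each summand of \eqref{2-3} with $\{\tau \le t\}$. The $k$-th piece contributes $\{\tau^k \le t\} \cap \{\tau^0 \ge \zeta_1, \ldots, \tau^{k-1} \ge \zeta_k, \tau^k < \zeta_{k+1}\}$. The built-in constraint $\tau^k \ge \zeta_k$ combined with $\tau^k \le t$ forces $\zeta_k \le t$, so the values $\pmb\zeta_k, \pmb\ell_k$ are revealed by time $t$; consequently each event $\{\tau^j \ge \zeta_{j+1}\}$ for $j < k$ (now a comparison of a $\mathbb{G}^j$-stopping time to a time-stamp $\zeta_{j+1} \le \zeta_k \le t$ that has already occurred) is $\mathcal{G}_t$-measurable, and $\{\tau^k < \zeta_{k+1}\} \cap \{\tau^k \le t\}$ reduces similarly. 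Combined with $\{\tau^k \le t\} \in \mathcal{G}^k_t$, this shows $\{\tau \le t\} \in \mathcal{G}_t$.

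For the ``only if'' direction, the engine is the following single-step decomposition lemma: every $\mathbb{G}^k$-stopping time $\sigma$ can be written as $\sigma = \sigma^{k-1} 1_{\{\sigma^{k-1} < \zeta_k\}} + \widehat\sigma^k 1_{\{\sigma^{k-1} \ge \zeta_k\}}$, where $\sigma^{k-1}$ is a $\mathbb{G}^{k-1}$-stopping time and $\widehat\sigma^k$ is a $\mathbb{G}^k$-stopping time with $\widehat\sigma^k \ge \zeta_k$. I would establish this via the trace-$\sigma$-algebra identity $\mathcal{G}^k_t \cap \{t < \zeta_k\} = \mathcal{G}^{k-1}_t \cap \{t < \zeta_k\}$, which holds because on $\{t < \zeta_k\}$ the generators $1_{\{\zeta_k \le s\}}$ and $\ell_k 1_{\{\zeta_k \le s\}}$ (for $s \le t$) of $\mathcal{D}^k_t$ beyond $\mathcal{D}^{k-1}_t$ vanish identically. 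Given this, I would extract $A_t \in \mathcal{G}^{k-1}_t$ with $\{\sigma \le t\} \cap \{t < \zeta_k\} = A_t \cap \{t < \zeta_k\}$ and set $\sigma^{k-1} := \inf\{t \ge 0 : \omega \in A_t\}$ and $\widehat\sigma^k := \sigma \vee \zeta_k$; a direct check gives $\{\sigma^{k-1} < \zeta_k\} = \{\sigma < \zeta_k\}$ and $\sigma^{k-1} = \sigma$ on that set. Iterating the lemma from $k = n$ down to $k = 1$, starting with $\sigma = \tau$ and successively decomposing the $\mathbb{G}^{k-1}$-factor produced at the previous step, yields a chain $\sigma^0, \widehat\sigma^1, \ldots, \widehat\sigma^n$. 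Setting $\tau^0 := \sigma^0$ and $\tau^k := \widehat\sigma^k$ for $k \ge 1$, a direct substitution collapses the nested decompositions to exactly \eqref{2-3}; the substitutions telescope cleanly because on $\{\sigma^{j-1} < \zeta_j\}$ all intermediate stopping times $\sigma^i$ (for $i \ge j-1$) automatically equal $\sigma^{j-1} < \zeta_j \le \zeta_{i+1}$, so the $j$-th piece propagates unchanged through every subsequent substitution.

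The hard part will be the coherence required in selecting the family $\{A_t\}_{t \ge 0}$ so that it is monotone in $t$ (i.e., $A_s \subseteq A_t$ for $s \le t$), making $\inf\{t : \omega \in A_t\}$ a bona fide $\mathbb{G}^{k-1}$-stopping time rather than merely a $\mathcal{G}^{k-1}_\infty$-measurable function. This is the classical enlargement-of-filtrations subtlety flagged in the introduction, and it is precisely why the filtrations $\mathbb{G}^k$ must be kept non-right-continuous: otherwise the trace identity fails in general and the coherent selection breaks down. Once the single-step lemma is established, the rest of the argument is essentially bookkeeping.
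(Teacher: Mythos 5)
Your ``if'' direction is essentially the paper's argument and is fine (modulo tightening: $\{\tau^j\geq\zeta_{j+1}\}$ is not $\mathcal{G}_t$-measurable on its own; as in the paper one uses $\{\tau^{j}\geq\zeta_{j+1}\}\in\mathcal{G}_{\zeta_{j+1}}\subset\mathcal{G}_{\tau^k}$ and then intersects with $\{\tau^k\leq t\}$). The ``only if'' direction, however, has a genuine gap, and it sits exactly where the theorem's content lies: your whole construction rests on the single-step lemma, and your proof of that lemma is deferred rather than given. The trace identity $\mathcal{G}^k_t\cap\{t<\zeta_k\}=\mathcal{G}^{k-1}_t\cap\{t<\zeta_k\}$ (which is indeed correct, and is the same computation as in Lemma~\ref{lemma 2-4}) only produces, for each \emph{fixed deterministic} $t$, some $A_t\in\mathcal{G}^{k-1}_t$ that is determined only up to its intersection with $\{t<\zeta_k\}$. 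There are uncountably many $t$, no canonical choice, and nothing forces monotonicity; with a bad selection, $\sigma^{k-1}:=\inf\{t:\omega\in A_t\}$ can be strictly smaller than $\sigma$ on $\{\sigma<\zeta_k\}$, so the identities $\{\sigma^{k-1}<\zeta_k\}=\{\sigma<\zeta_k\}$ and $\sigma^{k-1}=\sigma$ there are not a ``direct check''. Moreover, even granting a monotone selection, the d\'ebut only satisfies $\{\sigma^{k-1}<t\}=\bigcup_{s<t}A_s\in\mathcal{G}^{k-1}_t$, i.e.\ it is an optional time; since the filtrations $\mathbb{G}^k$ are deliberately \emph{not} right-continuous, this does not make $\sigma^{k-1}$ a $\mathbb{G}^{k-1}$-stopping time, which is precisely what your iteration needs at the next step. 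You flag this selection problem yourself as ``the hard part'' and leave it open; but it is not bookkeeping, it is the theorem.

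For comparison, the paper's proof is organized exactly so as to avoid ever invoking the trace identity at uncountably many (or random) times. It first treats discretely valued $\mathbb{G}$-stopping times $\tau=\sum_i a_i 1_{A_i}$: the trace identity is applied only at the countably many deterministic levels $a_i$, and the only coherence required is that the representatives $\tilde A_i\in\mathcal{G}^k_{a_i}$ can be taken mutually disjoint (the second half of Lemma~\ref{lemma 2-4}); then $\tau^k:=\sum_i a_i 1_{\tilde A_i}$ is a stopping time by inspection and \eqref{2-5} holds. A general $\tau$ is then handled by dyadic approximation from above and passing to the limit in \eqref{2-5}, and the constraint \eqref{eq:tgtjt} is enforced only at the end, by resetting $\tau^k$ to $\zeta_k$ on $\{\tau^k<\zeta_k\}$ (your $\widehat\sigma^k:=\sigma\vee\zeta_k$ plays the same role and is fine). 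Your one-jump-at-a-time architecture --- peeling off $\zeta_n$, then $\zeta_{n-1}$, and so on, in the spirit of Jeulin's single-time decomposition --- is a legitimate alternative skeleton, and the final telescoping is indeed routine (Proposition~\ref{prop 3-3} even shows the indicator events are forced); but to establish your single-step lemma you would still need the discretize-and-pass-to-the-limit argument (or an equivalent device), so as written the proposal omits the essential step rather than giving a different proof of it.
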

\begin{proof} If $\tau$ has the decomposition \eqref{2-3}, then
\begin{eqnarray}
\{\tau\leq t\}&=&\Big(\{\tau^0<\zeta_1\}\cap\{\tau^0\leq t\}\Big)\bigcup_{k=1}^{n-1}\Big(\{\tau^0\geq\zeta_1\}\cap\dotso\cap\{\tau^{k-1}\geq\zeta_k\}\cap\{\tau^k<\zeta_{k+1}\}\cap\{\tau^k\leq t\}\Big)\notag\\
&&\cup\Big(\{\tau^0\geq\zeta_1\}\cap\dotso\{\tau^{n-1}\geq\zeta_n\}\cap\{\tau^n\leq t\}\Big).\notag
\end{eqnarray}
For $k=1,\dotso,n$, since $\{\tau^k<\zeta_{k+1}\}\in\mathcal{G}_{\tau^k}$, and
$$\{\tau^{i-1}\geq\zeta_i\}\in\mathcal{G}_{\zeta_i}\subset\mathcal{G}_{\zeta_k}\subset\mathcal{G}_{\tau^k},\ \ i=1,\dotso,k,$$
we have
$$\{\tau^0\geq\zeta_1\}\cap\dotso\cap\{\tau^{k-1}\geq\zeta_k\}\cap\{\tau^k<\zeta_{k+1}\}\cap\{\tau^k\leq t\}\notag\in\ \mathcal{G}_t.$$
Similarly we can show $\{\tau^0<\zeta_1\}\cap\{\tau^0\leq t\}\in\mathcal{G}_t$ and 
$$\{\tau^0\geq\zeta_1\}\cap\dotso\{\tau^{n-1}\geq\zeta_n\}\cap\{\tau^n\leq t\}\ \in\mathcal{G}_t.$$
If $\tau$ is a $\mathbb{G}$-stopping time, we will proceed in 3 steps to show that it has the decomposition \eqref{2-3}.
\medskip

\noindent\textbf{Step 1}: We will show that for any discretely valued $\mathbb{G}$-stopping time
$$\tau=\sum_{1\leq i\leq\infty}a_i1_{A_i},$$
where  $0\leq a_1<a_2<\dotso< a_{\infty}=\infty$ and $(A_i\in\mathcal{G}_{a_i})_{1\leq i\leq\infty}$ is a partition of $\Omega$, there exists a $\mathbb{G}^k$-stopping time $\tau^k=\tau^k(\pmb\zeta_k,\pmb\ell_k)$, such that
\begin{equation}\label{2-5}\tau 1_{\{\tau<\zeta_{k+1}\}}=\tau^k 1_{\{\tau<\zeta_{k+1}\}} \ \ \ \text{and}\ \ \ \{\tau<\zeta_{k+1}\}=\{\tau^k<\zeta_{k+1}\},\end{equation}
for $k=0,\dotso,n-1$. First, we have
$$\{\tau<\zeta_{k+1}\}=\bigcup_{1\leq i\leq\infty}\big(\{\tau<\zeta_{k+1}\}\cap\{A_i\}\big)=\bigcup_{1\leq i\leq\infty}\big(\{a_i<\zeta_{k+1}\}\cap\{A_i\}\big).$$
To complete Step 1, we need the following lemma:
\begin{lemma}\label{lemma 2-4}
For $i=1,\dotso,\infty$, and $A_i \in \mathcal{G}_{a_i}$, there exists $\tilde A_i\in\mathcal{G}_{a_i}^k$, such that
\begin{equation}
\label{2-4}\{a_i<\zeta_{k+1}\}\cap\tilde A_i=\{a_i<\zeta_{k+1}\}\cap A_i.
\end{equation}
Moreover, $(\tilde A_i)_{1\leq i\leq\infty}$ can be chosen to be mutually disjoint.
\end{lemma}
\noindent \emph{Proof of Lemma~\ref{lemma 2-4}}. 
Since for $j\geq k+1$,
\begin{eqnarray}
&&\big(\sigma(1_{\{\zeta_j\leq s\}},\ s\leq a_i)\vee\sigma(\ell_j1_{\{\zeta_j\leq s\}},\  s\leq a_i)\big)\cap\{a_i<\zeta_{k+1}\}\notag\\
&&=\sigma\Big(\{\zeta_j\leq s\},\ \big(\{\ell\in C\}\cap\{\zeta_j\leq t\}\big)\cup\{\zeta_j>t\},\ \ s,t\leq a_i, \ C\in\mathcal{B}(E)\Big)\cap\{a_i<\zeta_{k+1}\}\notag\\
&&=\sigma\bigg(\{\zeta_j\leq s\}\cap\{a_i<\zeta_{k+1}\},\ \Big(\big(\{\ell\in C\}\cap\{\zeta_j\leq t\}\big)\cup\{\zeta_j>t\}\Big)\cap\{a_i<\zeta_{k+1}\},\notag\\
&&\ \ \ \ \ s,t\leq a_i, \ C\in\mathcal{B}(E)\bigg)\notag\\
&&=\big\{\emptyset,\ \{a_i<\zeta_{k+1}\}\big\},\notag\end{eqnarray}
we have
\begin{eqnarray}&&\mathcal{G}_{a_i}\cap\{a_i<\zeta_{k+1}\}\notag\\
&&=\bigg(\mathcal{F}_{a_i}\vee\Big(\vee_{j=1}^n\big(\sigma(1_{\{\zeta_j\leq s\}},\ s\leq a_i)\vee\sigma(\ell_j1_{\{\zeta_j\leq s\}},\  s\leq a_i)\big)\Big)\bigg)\cap\{a_i<\zeta_{k+1}\}\notag\\
&&=\bigg(\big(\mathcal{F}_{a_i}\cap\{a_i<\zeta_{k+1}\}\big)\vee\Big(\vee_{j=1}^n\big(\sigma(1_{\{\zeta_j\leq s\}},\ s\leq a_i)\vee\sigma(\ell_j1_{\{\zeta_j\leq s\}},\  s\leq a_i)\big)\cap\{a_i<\zeta_{k+1}\}\Big)\bigg)\notag\\
&&=\bigg(\big(\mathcal{F}_{a_i}\cap\{a_i<\zeta_{k+1}\}\big)\vee\Big(\vee_{j=1}^k\big(\sigma(1_{\{\zeta_j\leq s\}},\ s\leq a_i)\vee\sigma(\ell_j1_{\{\zeta_j\leq s\}},\  s\leq a_i)\big)\cap\{a_i<\zeta_{k+1}\}\Big)\bigg)\notag\\
&&=\bigg(\mathcal{F}_{a_i}\vee\Big(\vee_{j=1}^k\big(\sigma(1_{\{\zeta_j\leq s\}},\ s\leq a_i)\vee\sigma(\ell_j1_{\{\zeta_j\leq s\}},\  s\leq a_i)\big)\Big)\bigg)\cap\{a_i<\zeta_{k+1}\}\notag\\
&&=\mathcal{G}_{a_i}^k\cap\{a_i<\zeta_{k+1}\},\notag
\end{eqnarray}
which proves the existence result in \leref{lemma 2-4}. Now suppose $(\bar A_i\in\mathcal{G}_{a_i}^k)_{1\leq i\leq\infty}$ are the sets such that \eqref{2-4} holds. Define $\tilde A_1=\bar A_1,\ \tilde A_{\infty}=\emptyset,$
and
$$\tilde A_{m+1}=\bar A_{m+1}\setminus\bigcup_{j=1}^m\bar A_j,\ \ m=1,2,\dotso$$
Since for $i\neq j,\ \big(\bar A_{i}\cap\{a_{i}<\zeta_{k+1}\}\big)\cap\big(\bar A_{j}\cap\{a_{j}<\zeta_{k+1}\}\big)=\emptyset$, we have for $m=1,2,\dotso$,
\begin{eqnarray}
\bar A_{m+1}\cap\{a_{m+1}<\zeta_{k+1}\}&\supset&\tilde A_{m+1}\cap\{a_{m+1}<\zeta_{k+1}\}\notag\\
&=&\big(\bar A_{m+1}\cap\{a_{m+1}<\zeta_{k+1}\}\big)\setminus\bigcup_{j=1}^m\big(\bar A_j\cap\{a_{m+1}<\zeta_{k+1}\}\big)\notag\\
&\supset&\big(\bar A_{m+1}\cap\{a_{m+1}<\zeta_{k+1}\}\big)\setminus\bigcup_{j=1}^m\big(\bar A_j\cap\{a_j<\zeta_{k+1}\}\big)\notag\\
&=&\big(\bar A_{m+1}\cap\{a_{m+1}<\zeta_{k+1}\}\big).\notag
\end{eqnarray}
Therefore, $\tilde A_{m+1}\cap\{a_{m+1}<\zeta_{k+1}\}=\bar A_{m+1}\cap\{a_{m+1}<\zeta_{k+1}\}$, and thus $(\tilde A_i\in\mathcal{G}_{a_i}^k)_{1\leq i\leq\infty}$ are the disjoint sets such that \eqref{2-4} holds. This completes the proof of Lemma~\ref{lemma 2-4}.
\hfill $\square$

Now let us continue with the proof of \thref{theorem 3-1}. From \leref{lemma 2-4}, we have
$$\{\tau<\zeta_{k+1}\}=\bigcup_{1\leq i\leq\infty}\big(\{a_i<\zeta_{k+1}\}\cap\tilde A_i\big),$$
where $(\tilde A_i\in\mathcal{G}_{a_i}^k)_{1\leq i\leq\infty}$ are disjoint sets such that \eqref{2-4} holds. Define $\mathbb{G}^k$-stopping time
$$\tau^k=\sum_{1\leq i\leq\infty}a_i1_{\tilde A_i}.$$
Since
$$\tilde A_i\cap\{\tau<\zeta_{k+1}\}= \tilde A_i\cap\bigcup_{1\leq j\leq\infty}\big(\{a_j<\zeta_{k+1}\}\cap\tilde A_j\big)=\{a_i<\zeta_{k+1}\}\cap\tilde A_i=\{\tau<\zeta_{k+1}\}\cap A_i,$$
we have 
$$\tau^k 1_{\{\tau<\zeta_{k+1}\}}=\sum_{1\leq i\leq\infty}a_i 1_{\tilde A_i\cap\{\tau<\zeta_{k+1}\}}=\sum_{1\leq i\leq\infty}a_i 1_{A_i\cap\{\tau<\zeta_{k+1}\}}=\tau 1_{\{\tau<\zeta_{k+1}\}}.$$
Also,
$$\{\tau<\zeta_{k+1}\}=\bigcup_{1\leq i\leq\infty}\Big(\{a_i<\zeta_{k+1}\}\cap A_i\Big)=\bigcup_{1\leq i\leq\infty}\Big(\{a_i<\zeta_{k+1}\}\cap\tilde A_i\Big)=\{\tau^k<\zeta_{k+1}\}.$$
\medskip

\noindent\textbf{Step 2}: We will show that for any $\mathbb{G}$-stopping time $\tau$, there exists a $\mathbb{G}^k$-stopping time $\tau^k$, such that \eqref{2-5} holds.
Define the $\mathbb{G}$-stopping times
$$\tau_m:=\sum_{j=0}^\infty\frac{j+1}{2^m} \cdot 1_{\{\frac{j}{2^m}\leq\tau<\frac{j+1}{2^m}\}}+\infty\cdot 1_{\{\tau=\infty\}},\ \ m=1,2,\dotso$$
By Step 1, there exists a $\mathbb{G}^k$-stopping time $\tau_m^k$, such that
\begin{equation}\label{2-6}\tau_m^k 1_{\{\tau_m<\zeta_{k+1}\}}=\tau_m 1_{\{\tau_m<\zeta_{k+1}\}}\ \ \ \text{and}\ \ \ \{\tau_m<\zeta_{k+1}\}=\{\tau_m^k<\zeta_{k+1}\}.\end{equation}
Define $\tau^k:=\limsup_{m\rightarrow\infty}\tau_m^k$. Since $\tau_m\searrow\tau$, by taking \lq\lq $\limsup$\rq\rq \ on both side of \eqref{2-6}, we have \eqref{2-5}.
\medskip

\noindent\textbf{Step 3}: From Step 2, we know that for any $\mathbb{G}$-stopping time $\tau$, there exists $\tau^0,\ \tau^1, \dotso, \tau^{n-1}$ being $\mathbb{F}$, $\mathbb{G}^1$, $\dotso,\ \mathbb{G}^{n-1}$-stopping times respectively, such that \eqref{2-5} holds, for $k=0,\dotso,n-1$. Let $\tau^n:=\tau$, then we have
\begin{eqnarray}
\tau&=&\tau 1_{\{\tau<\zeta_1\}}+\sum_{k=1}^{n-1}\tau1_{\{\zeta_k\leq \tau <\zeta_{k+1}\}}+\tau 1_{\{\zeta_n\leq\tau\}}\notag\\
&=&\tau^0 1_{\{\tau<\zeta_1\}}+\sum_{k=1}^{n-1}\tau^k1_{\{\zeta_k\leq \tau <\zeta_{k+1}\}}+\tau^n 1_{\{\zeta_n\leq\tau\}}\notag\\
&=&\tau^0 1_{\{\tau<\zeta_1\}}+\sum_{k=1}^{n-1}\tau^k1_{\{\tau\geq\zeta_1\}\cap\dotso\cap\{\tau\geq\zeta_k\}\cap\{\tau<\zeta_{k+1}\}}+\tau^n 1_{\{\tau\geq\zeta_1\}\cap\dotso\cap\{\tau\geq\zeta_n\}}\notag\\
&=&\tau^0 1_{\{\tau^0<\zeta_1\}}+\sum_{k=1}^{n-1}\tau^k1_{\{\tau^0\geq\zeta_1\}\cap\dotso\cap\{\tau^{k-1}\geq\zeta_k\}\cap\{\tau^k<\zeta_{k+1}\}}+\tau^n 1_{\{\tau^0\geq\zeta_1\}\cap\dotso\cap\{\tau^{n-1}\geq\zeta_n\}}.\notag
\end{eqnarray}
We will modify the decomposition so that it satisfies \eqref{eq:tgtjt}. For $k=1,\dotso,n$, define $\mathbb{G}^k$-stopping time
$$
\tilde\tau^k = \left\{ \begin{array}{rl}
 \tau^k, &\mbox{ $\tau^k\geq\zeta_k,$} \\
  \zeta_k, &\mbox{ $\tau^k<\zeta_k.$}
       \end{array} \right.
$$
and let $\tilde\tau^0:=\tau^0$. Then for $k=1,\dotso,n$, $\tilde\tau^k\geq\zeta_k$, and 
$$\{\tilde\tau^k<\zeta_{k+1}\}=\{\tau^k<\zeta_{k+1}\}=\{\tau<\zeta_{k+1}\},\ \ k=0,\dotso,n-1.$$
For $k=1,\dotso,n-1$, since $\{\zeta_k\leq \tau <\zeta_{k+1}\}\subset\{\tau=\tau^k\}$, we have
\begin{eqnarray}
&&\hspace{-3cm}\{\tau^0\geq\zeta_1\}\cap\dotso\cap\{\tau^{k-1}\geq\zeta_k\}\cap\{\tau^k<\zeta_{k+1}\}\notag\\
&&=\{\zeta_k\leq \tau <\zeta_{k+1}\}=\{\zeta_k\leq \tau <\zeta_{k+1}\}\cap\{\tau=\tau^k\}\subset\{\tau^k\geq\zeta_k\}.\notag
\end{eqnarray}
Also $\{\tau\geq\zeta_n\}\subset\{\tau=\tau_n\}$ implies
$$\{\tau^0\geq\zeta_1\}\cap\dotso\cap\{\tau^{n-1}\geq\zeta_n\}=\{\tau\geq\zeta_n\}=\{\tau\geq\zeta_n\}\cap\{\tau=\tau^n\}\subset\{\tau^n\geq\zeta_n\}.$$
Therefore, we have
\begin{eqnarray}
\tau&=&\tau^0 1_{\{\tau^0<\zeta_1\}}+\sum_{k=1}^{n-1}\tau^k1_{\{\tau^0\geq\zeta_1\}\cap\dotso\cap\{\tau^{k-1}\geq\zeta_k\}\cap\{\tau^k<\zeta_{k+1}\}}+\tau^n 1_{\{\tau^0\geq\zeta_1\}\cap\dotso\cap\{\tau^{n-1}\geq\zeta_n\}}\notag\\
&=&\tilde\tau^0 1_{\{\tau^0<\zeta_1\}}+\sum_{k=1}^{n-1}\tilde\tau^k1_{\{\tau^0\geq\zeta_1\}\cap\dotso\cap\{\tau^{k-1}\geq\zeta_k\}\cap\{\tau^k<\zeta_{k+1}\}}+\tilde\tau^n 1_{\{\tau^0\geq\zeta_1\}\cap\dotso\cap\{\tau^{n-1}\geq\zeta_n\}}\notag\\
&=&\tilde\tau^0 1_{\{\tilde\tau^0<\zeta_1\}}+\sum_{k=1}^{n-1}\tilde\tau^k1_{\{\tilde\tau^0\geq\zeta_1\}\cap\dotso\cap\{\tilde\tau^{k-1}\geq\zeta_k\}\cap\{\tilde\tau^k<\zeta_{k+1}\}}+\tilde\tau^n 1_{\{\tilde\tau^0\geq\zeta_1\}\cap\dotso\cap\{\tilde\tau^{n-1}\geq\zeta_n\}}.\notag
\end{eqnarray}
This ends the proof of Theorem~\ref{theorem 3-1}.
\end{proof} 
 In the rest of the paper, we will use the notation $\tau\sim(\tau^0,\dotso,\tau^n)$ for the $\mathbb{G}$-stopping time $\tau$ if it has the decomposition from \eqref{2-3}. The next result shows that the decomposition of $\tau$ in \eqref{2-3} is unique, in the sense that the terms in the sum of $\tau$'s representation are the same even for different $(\tau^0,\dotso,\tau^n)$'s in the representation. (Note that one can modify the stopping times $\tau^i$ after the jump times $\zeta_{i+1}$.)
\begin{proposition}\label{prop 3-3}
 Let $\tau\sim(\tau^0,\dotso,\tau^n)$ be a $\mathbb{G}$-stopping time. Then $\{\tau^0<\zeta_1\}=\{\tau<\zeta_1\}$, $\{\tau^0\geq\zeta_1\}\cap\dotso\cap\{\tau^{k-1}\geq\zeta_k\}\cap\{\tau^k<\zeta_{k+1}\}=\{\zeta_k\leq\tau<\zeta_{k+1}\}$ for $k=1,\dotso,n-1$, and $\{\tau^0\geq\zeta_1\}\cap\dotso\cap\{\tau^{n-1}\geq\zeta_n\}=\{\zeta_n\leq\tau\}$. Therefore,
\begin{equation}\label{2-7}\tau=\tau^0 1_{\{\tau<\zeta_1\}}+\sum_{k=1}^{n-1}\tau^k1_{\{\zeta_k\leq \tau <\zeta_{k+1}\}}+\tau^n 1_{\{\zeta_n\leq\tau\}}.\notag\end{equation}
\end{proposition}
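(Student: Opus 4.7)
The plan is to show that the two families
$$A_0=\{\tau^0<\zeta_1\},\ A_k=\{\tau^0\geq\zeta_1\}\cap\dotso\cap\{\tau^{k-1}\geq\zeta_k\}\cap\{\tau^k<\zeta_{k+1}\}\ (1\leq k\leq n-1),\ A_n=\{\tau^0\geq\zeta_1\}\cap\dotso\cap\{\tau^{n-1}\geq\zeta_n\},$$
and (with the convention $\zeta_0=0,\ \zeta_{n+1}=\infty$)
$$B_k=\{\zeta_k\leq\tau<\zeta_{k+1}\},\quad k=0,\dotso,n,$$
coincide term by term. The point is that the sets $B_k$ are manifestly determined by $\tau$ alone, so once $A_k=B_k$ is proved the uniqueness assertion of the Proposition follows, together with the displayed formula \eqref{2-7}.

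First I would observe that $(A_k)_{k=0}^n$ is a partition of $\Omega$: the $A_k$'s are pairwise disjoint by inspection (any two of them are separated by an inclusion/exclusion relative to some $\{\tau^{j}\geq\zeta_{j+1}\}$ vs.\ $\{\tau^{j}<\zeta_{j+1}\}$), and their union exhausts $\Omega$ by a telescoping argument splitting $\Omega$ successively along $\{\tau^0<\zeta_1\}/\{\tau^0\geq\zeta_1\}$, then $\{\tau^1<\zeta_2\}/\{\tau^1\geq\zeta_2\}$, and so on. Similarly $(B_k)_{k=0}^n$ is a partition of $\Omega$.

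The key step is the inclusion $A_k\subseteq B_k$ for each $k$. On $A_k$ the decomposition \eqref{2-3} gives $\tau=\tau^k$; for $k\leq n-1$ the membership in $A_k$ directly supplies the upper bound $\tau=\tau^k<\zeta_{k+1}$, and for $k\geq 1$ property \eqref{eq:tgtjt} yields the lower bound $\tau=\tau^k\geq\zeta_k$. Combined with the conventions $\zeta_0=0$ and $\zeta_{n+1}=\infty$ this gives $\zeta_k\leq\tau<\zeta_{k+1}$ on $A_k$, i.e.\ $A_k\subseteq B_k$.

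Finally, since both $(A_k)$ and $(B_k)$ are partitions of $\Omega$ with $A_k\subseteq B_k$, equality $A_k=B_k$ follows: any $\omega\in B_k$ must lie in some $A_j$, but then $\omega\in B_j$, forcing $j=k$ by disjointness of the $B_j$'s. Substituting $A_k=B_k$ into \eqref{2-3} yields \eqref{2-7}. I do not anticipate any substantive obstacle; the only delicate point is keeping track of the boundary cases $k=0$ and $k=n$, where the "missing" conditions $\{\tau^{-1}\geq\zeta_0\}$ and $\{\tau^n<\zeta_{n+1}\}$ must be read as vacuously true under the conventions above.
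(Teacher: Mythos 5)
Your proposal is correct and follows essentially the same route as the paper's own proof: on each $A_k$ the decomposition forces $\tau=\tau^k$, which together with $\tau^k\geq\zeta_k$ and the constraint $\tau^k<\zeta_{k+1}$ gives $A_k\subseteq B_k$, and equality then follows because both families are partitions of $\Omega$. The only difference is cosmetic (your explicit conventions $\zeta_0=0$, $\zeta_{n+1}=\infty$ and the spelled-out disjointness/exhaustion argument, which the paper leaves implicit).
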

\begin{proof}
Let $A_0:=\{\tau^0<\zeta_1\}$, $A_n:=\{\tau^0\geq\zeta_1\}\cap\dotso\cap\{\tau^{n-1}\geq\zeta_n\}$, and
$$A_k:=\{\tau^0\geq\zeta_1\}\cap\dotso\cap\{\tau^{k-1}\geq\zeta_k\}\cap\{\tau^k<\zeta_{k+1}\},\ \ k=1,\dotso,n-1.$$
Let $B_0:=\{\tau<\zeta_1\}$, $B_n:=\{\zeta_n\leq\tau\}$, and $B_k:=\{\zeta_k\leq\tau<\zeta_{k+1}\}, \ k=1,\dotso,n-1$. In the set $A_i$, we have $\tau=\tau^i$, which implies $\zeta_i\leq\tau<\zeta_{i+1}$, and thus $A_i\subset B_i$, for $i=1,\dotso,n-1$. Similarly, $A_0\subset B_0$ and $A_n\subset B_n$. Since $(A_i)_{i=0}^n$ and $(B_i)_{i=0}^n$ are mutually disjoint respectively, and $\Omega=\bigcup_{i=0}^nA_i=\bigcup_{i=0}^nB_i$, we have $A_i=B_i,\ i=0,\dotso,n$.
\end{proof}

The last proposition generalizes the decomposition result given in Theorem (A2.3) of \cite{MR1283589} on page 261 (also see Theorem T33 of \cite{MR636252} on page 308) from the stopping times of piecewise deterministic Markov processes to the stopping times of jump diffusions.

\begin{proposition}\label{prop 2-6}
Let $T>0$ be a constant. $\tau$ is an $\mathbb{G}$-stopping time satisfying $\tau\leq T$ if and only if $\tau$ has the decomposition \eqref{2-3}, with $\tau^0\leq T$ and $\{\zeta_k\leq T\}=\{\tau^k\leq T\},\ k=1,\dotso,n$.
\end{proposition}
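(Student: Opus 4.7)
The proposition asserts two things: (i) given the $T$--constraint on $\tau$, we can choose a decomposition in which each piece already respects $T$ in the prescribed way, and (ii) these piecewise constraints are strong enough to reconstruct $\tau\le T$. My plan is to treat each direction separately; the easier implication is the ``if'' direction, which runs on Proposition~\ref{prop 3-3} and a short induction, while the ``only if'' direction requires modifying the decomposition produced by Theorem~\ref{theorem 3-1} in a way that preserves \eqref{2-3}.

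\textbf{Sufficiency.} Assume the decomposition with $\tau^0\le T$ and $\{\zeta_k\le T\}=\{\tau^k\le T\}$. Theorem~\ref{theorem 3-1} already gives that $\tau$ is a $\mathbb{G}$--stopping time, so only $\tau\le T$ needs proof. By Proposition~\ref{prop 3-3}, it suffices to check $\tau^k\le T$ on $A_k:=\{\zeta_k\le\tau<\zeta_{k+1}\}$ (or on $A_n:=\{\zeta_n\le\tau\}$). I would argue by induction on $j$ that on $\{\tau^0\ge\zeta_1\}\cap\dots\cap\{\tau^{j-1}\ge\zeta_j\}$ one has $\tau^j\le T$. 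The base case $j=0$ is the hypothesis $\tau^0\le T$. For the inductive step, from $\tau^{j-1}\ge\zeta_j$ and $\tau^{j-1}\le T$ (inductive hypothesis) we obtain $\zeta_j\le T$, whence $\tau^j\le T$ by the equality $\{\zeta_j\le T\}=\{\tau^j\le T\}$. Restricting the induction to the correct set $A_k$ or $A_n$ yields $\tau\le T$.

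\textbf{Necessity.} Start from any decomposition $\tau\sim(\tau^0,\dotso,\tau^n)$ given by Theorem~\ref{theorem 3-1}. I would define modified pieces
\[
\tilde\tau^0:=\tau^0\wedge T,\qquad \tilde\tau^k:=\zeta_k\vee(\tau^k\wedge T),\quad k=1,\dotso,n.
\]
Standard closure properties make $\tilde\tau^0$ an $\mathbb{F}$--stopping time with $\tilde\tau^0\le T$, and each $\tilde\tau^k$ a $\mathbb{G}^k$--stopping time with $\tilde\tau^k\ge\zeta_k$, while a two--case check on $\{\zeta_k\le T\}$ versus $\{\zeta_k>T\}$ delivers $\{\tilde\tau^k\le T\}=\{\zeta_k\le T\}$. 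It then remains to verify that $\tau\sim(\tilde\tau^0,\dotso,\tilde\tau^n)$. Here the key observation, supplied by Proposition~\ref{prop 3-3} and $\tau\le T$, is that $\{\tau^{j-1}\ge\zeta_j\}$ is contained in $\{\zeta_j\le T\}$: indeed on $\{\tau^{j-1}\ge\zeta_j\}$ we have $\tau\ge\zeta_j$ by Proposition~\ref{prop 3-3}, hence $\zeta_j\le\tau\le T$. With this containment, one checks $\{\tilde\tau^{j-1}\ge\zeta_j\}=\{\tau^{j-1}\ge\zeta_j\}$ and, symmetrically, $\{\tilde\tau^k<\zeta_{k+1}\}=\{\tau^k<\zeta_{k+1}\}$, so the indicator sets appearing in \eqref{2-3} are unchanged. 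Finally, on each $A_k$ one has $\zeta_k\le\tau=\tau^k\le T$, which forces $\tilde\tau^k=\tau^k$ there, so the values are also unchanged and the decomposition \eqref{2-3} is preserved.

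\textbf{Expected obstacle.} The delicate point is the last step: ensuring that the truncation at $T$ and the padding by $\zeta_k$ do not shift the boundary sets $\{\tilde\tau^{j-1}\ge\zeta_j\}$ with respect to $\{\tau^{j-1}\ge\zeta_j\}$. The check relies crucially on the implication $\tau\le T\Rightarrow\zeta_j\le T$ on those events (via Proposition~\ref{prop 3-3}), which is exactly what rules out the troublesome configuration $\tau^{j-1}>T$, $\zeta_j>T$ while $\tau^{j-1}\ge\zeta_j$ survives intact.
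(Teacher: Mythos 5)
Your sufficiency argument is fine and is essentially the paper's chain argument, and your modified stopping times $\tilde\tau^0=\tau^0\wedge T$, $\tilde\tau^k=\zeta_k\vee(\tau^k\wedge T)$ are a harmless variant of the paper's construction (on $\{\zeta_k\le T\}$ they coincide with $\tau^k\wedge T$, on $\{\zeta_k>T\}$ you take $\zeta_k$ instead of $\tau^k$; either choice satisfies $\tilde\tau^k\ge\zeta_k$ and $\{\tilde\tau^k\le T\}=\{\zeta_k\le T\}$). The gap is in the necessity step where you verify that the indicator sets of \eqref{2-3} are unchanged. Your ``key observation'' --- that $\{\tau^{j-1}\ge\zeta_j\}\subset\{\zeta_j\le T\}$ because ``on $\{\tau^{j-1}\ge\zeta_j\}$ we have $\tau\ge\zeta_j$ by Proposition~\ref{prop 3-3}'' --- is not what Proposition~\ref{prop 3-3} gives. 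It only yields $\tau\ge\zeta_j$ on the full nested intersection $\{\tau^0\ge\zeta_1\}\cap\dots\cap\{\tau^{j-1}\ge\zeta_j\}$; the single event $\{\tau^{j-1}\ge\zeta_j\}$ carries no such information, since $\tau^{j-1}$ is unconstrained off the set where it represents $\tau$. Consequently the asserted event-by-event identities $\{\tilde\tau^{j-1}\ge\zeta_j\}=\{\tau^{j-1}\ge\zeta_j\}$ and $\{\tilde\tau^k<\zeta_{k+1}\}=\{\tau^k<\zeta_{k+1}\}$ are false in general: take $\omega$ with $\tau<\zeta_1\le T<\zeta_2$ and (as Theorem~\ref{theorem 3-1} permits) $\tau^1\ge\zeta_2$; then $\tilde\tau^1=\zeta_1\vee(\tau^1\wedge T)=T<\zeta_2$, so $\{\tau^1\ge\zeta_2\}$ holds while $\{\tilde\tau^1\ge\zeta_2\}$ fails, and similarly $\{\tilde\tau^1<\zeta_2\}$ can hold while $\{\tau^1<\zeta_2\}$ fails.

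The conclusion itself is still true, but it must be proved at the level of the nested intersections, which is exactly what the paper does: write $A_k$ for the indicator sets built from $(\tau^0,\dots,\tau^n)$ (equal to $\{\zeta_k\le\tau<\zeta_{k+1}\}$, resp.\ $\{\tau<\zeta_1\}$, $\{\zeta_n\le\tau\}$, by Proposition~\ref{prop 3-3}) and $B_k$ for those built from $(\tilde\tau^0,\dots,\tilde\tau^n)$, prove a one-sided inclusion for every $k$ (your last paragraph already contains the needed facts: on $A_k$ one has $\zeta_j\le\tau\le T$ and $\tau^{j-1}\ge\zeta_j$ for $j\le k$, hence $\tilde\tau^{j-1}\ge\zeta_j$, and $\tilde\tau^k=\tau^k=\tau<\zeta_{k+1}$, giving $A_k\subset B_k$), and then invoke the fact that both $(A_k)_{k=0}^n$ and $(B_k)_{k=0}^n$ are partitions of $\Omega$ to conclude $A_k=B_k$. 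With that repair, your final observation that $\tilde\tau^k=\tau^k$ on $A_k$ finishes the proof; but as written, the step you yourself flag as the ``delicate point'' rests on an invalid set inclusion rather than on the partition argument that actually closes it.
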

\begin{proof}
If $\tau$ has the decomposition, then on the set $\{\tau^0\geq\zeta_1\}\cap\dotso\cap\{\tau^{k-1}\geq\zeta_k\}$, we have
$$T\geq\tau^0\geq\zeta_1\ \Rightarrow\  T\geq\tau^1\ \Rightarrow\  T\geq\zeta_2\ \Rightarrow\dotso\Rightarrow\  T\geq\tau^{k-1}\ \Rightarrow\  T\geq\zeta_k\ \Rightarrow T\geq\tau^k,$$
For $k=1,\dotso,n$. Thus $\tau\leq T$. 

Conversely, let $\tau\sim(\tau^0,\dotso,\tau^n)$ be a $\mathbb{G}$-stopping time satisfying $\tau\leq T$. Let $\tilde\tau^0:=\tau^0$, and 
$$
\tilde\tau^k := \left\{ \begin{array}{rl}
 \tau^k\wedge T, &\mbox{ $\zeta_k\leq T,$} \\
  \tau^k,\ \ \  &\mbox{ $\zeta_k>T.$}
       \end{array} \right.
$$
for $k=0,\dotso,n$. It can be shown that $\tilde\tau^k$ is a $\mathbb{G}^k$-stopping time. Then for $k=1,\dotso,n-1$,
$$\zeta_k\leq\tau<\zeta_{k+1}\ \Rightarrow\ \tau^k=\tau\leq T\ \Rightarrow\ \tilde\tau^k=\tau^k.$$
Similarly, $\zeta_n\leq\tau\ \Rightarrow\ \tilde\tau^n=\tau^n$. Therefore, 
$$\tau=\tilde\tau^0 1_{\{\tau<\zeta_1\}}+\sum_{k=1}^{n-1}\tilde\tau^k1_{\{\zeta_k\leq \tau <\zeta_{k+1}\}}+\tilde\tau^n 1_{\{\zeta_n\leq\tau\}}.$$
Easy to see $\tilde\tau^k\geq\zeta_k$ and $\{\zeta_k\leq T\}=\{\tilde\tau^k\leq T\},\ k=1,\dotso,n$. It remains to show $A_i=B_i, i=0,\dotso,n$, where $A_0:=\{\tau^0<\zeta_1\}$, $A_n:=\{\tau^0\geq\zeta_1\}\cap\dotso\cap\{\tau^{n-1}\geq\zeta_n\}$, 
$$A_k:=\{\tau^0\geq\zeta_1\}\cap\dotso\cap\{\tau^{k-1}\geq\zeta_k\}\cap\{\tau^k<\zeta_{k+1}\},\ \ k=0,\dotso,n-1,$$
and $B_0:=\{\tilde\tau^0<\zeta_1\}$, $B_n:=\{\tilde\tau^0\geq\zeta_1\}\cap\dotso\cap\{\tilde\tau^{n-1}\geq\zeta_n\}$, 
$$B_k:=\{\tilde\tau^0\geq\zeta_1\}\cap\dotso\cap\{\tilde\tau^{k-1}\geq\zeta_k\}\cap\{\tilde\tau^k<\zeta_{k+1}\},\ \ k=0,\dotso,n-1.$$
Easy to see $A_0=B_0$ and $A_n\supset B_n$. Now for $k=1,\dotso,n-1$,
\begin{eqnarray}
&&\{\tau^0\geq\zeta_1\}\cap\dotso\cap\{\tau^{k-1}\geq\zeta_k\}\cap\{\tilde\tau^k<\zeta_{k+1}\}\notag\\
&&\subset\{\tau^0\geq\zeta_1\}\cap\dotso\cap\{\tau^{k-1}\geq\zeta_k\}\cap\Big(\{\tau^k<\zeta_{k+1}\}\cup\{T<\zeta_{k+1}\}\Big).\notag
\end{eqnarray}
Since
$$\{\tau^0\geq\zeta_1\}\cap\dotso\cap\{\tau^{k-1}\geq\zeta_k\}\cap\{T<\zeta_{k+1}\}\cap\{\tau^k\geq\zeta_{k+1}\}=\emptyset,$$
we have
$$\{\tau^0\geq\zeta_1\}\cap\dotso\cap\{\tau^{k-1}\geq\zeta_k\}\cap\{T<\zeta_{k+1}\}\subset\{\tau^0\geq\zeta_1\}\cap\dotso\cap\{\tau^{k-1}\geq\zeta_k\}\cap\{\tau^k<\zeta_{k+1}\}.$$
Hence, for $k=1,\dotso,n-1$,
\begin{eqnarray}
B_k&\subset&\{\tau^0\geq\zeta_1\}\cap\dotso\cap\{\tau^{k-1}\geq\zeta_k\}\cap\{\tilde\tau^k<\zeta_{k+1}\}\notag\\
&=&\{\tau^0\geq\zeta_1\}\cap\dotso\cap\{\tau^{k-1}\geq\zeta_k\}\cap\{\tau^k<\zeta_{k+1}\}=A_k\notag
\end{eqnarray}
Since $\bigcup_{k=0}^nA_k=\bigcup_{k=0}^nB_k=\Omega$, and $(A_k)_{k=0}^n$ and $(B_k)_{k=0}^n$ are mutually disjoint respectively, we have $A_k=B_k,\ k=0,\dotso,n$.
\end{proof}

\section{Decomposition of expectations of $\mathbb{G}$-optional processes}\label{sec:expectation}

The main result in this section is \thref{theorem 4-3}, which shows that the expectation of  a stopped $\mathbb{G}$-optional process can be calculated using a backward induction, where each step is an expectation with respect to the Brownian filtration.

\noindent\textbf{Standing Assumption:} 
For the rest of the paper, we assume there exists a conditional probability density function $\alpha\in\mathcal{O}_{\mathbb{F}}(\Delta_n, E^n)$, such that
\begin{equation}\label{3-6}
\begin{split}
\mathbb{P}\big[(\zeta_1,\dotso,\zeta_n,\ell_1,\dotso,\ell_n)&\in d\theta_1\dotso d\theta_n de_1\dotso de_n|\mathcal{F}_t]
\\&=\alpha_t(\theta_1,\dotso,\theta_n,e_1,\dotso,e_n)d\theta_1\dotso d\theta_n \eta(de_1)\dotso \eta(de_n), \ \ \text{a.s.},
\end{split}
\end{equation}
where $d\theta_k$ is the Lebesgue measure, and $\eta(de_k)$ is some probability measure which may depend on $(\pmb\theta_{k-1},\pmb e_{k-1})$ (e.g., transition kernel), for $k=1,\dotso, n$. We also assume that the map $t\rightarrow\alpha_t$ is right continuous and
\begin{equation}\label{u.i.}
\mathbb{E}\left[\int_{E^n}\int_{\Delta_n}\sup_{t\geq 0}\alpha_t(\pmb\theta_n,\pmb e_n)d\theta_1\dotso d\theta_n \eta(de_1)\dotso \eta(de_n)\right]<\infty.
\end{equation}

Following \cite{pham1}, let us set $\alpha_t^n(\pmb\theta_n,\pmb e_n)=\alpha_t(\pmb\theta_n,\pmb e_n)$, and
\begin{equation}\label{eq:alphak}
\alpha_t^k(\pmb\theta_k,\pmb e_k)=\int_E\int_t^\infty \alpha_t^{k+1}(\pmb\theta_k,\theta_{k+1},\pmb e_k,e_{k+1})\ d\theta_{k+1}\eta(de_{k+1}),\ \ \ k=0,\dotso,n-1.
\end{equation}
Note that $\alpha=0$ when $\theta_1,\dotso,\theta_n$ are not in an ascending order. As a result, for $k=0,\dotso,n-1$,
$$\alpha_t^k(\pmb\theta_k,\pmb e_k)=\int_{E^k}\int_t^\infty\int_{\theta_{k+1}}^\infty\dotso\int_{\theta_{n-1}}^\infty\alpha_t(\pmb\theta_n,\pmb e_n)\ d\theta_n\dotso d\theta_{k+1}\eta(de_n)\dotso \eta(de_{k+1}).$$
Hence $ \mathbb{P}[\zeta_1>t|\mathcal{F}_t]=\alpha_t^0$, and for $k=1,\dotso,n-1$,
$$\mathbb{P}[\zeta_{k+1}>t|\mathcal{F}_t]=\int_{E^k}\int_{\Delta_k}\alpha_t^k(\theta_1,\dotso,\theta_k,e_1,\dotso,e_k)\ d\theta_1\dotso d\theta_k \eta(de_1)\dotso \eta(de_k).$$
Therefore, $\alpha^k$ can be interpreted as the survival density of $\zeta_{k+1}$. 

Let us recall the following lemma from \cite{pham1}.
\begin{lemma}\label{lemma 2-1}
 Any process $Z=(Z_t)_{t\geq 0}$ is $\mathbb{G}$-optional if and only if it has the decomposition:
\begin{equation}\label{2-2} Z_t=Z_t^0 1_{\{t<\zeta_1\}}+\sum_{k=1}^{n-1}Z_t^k(\pmb\zeta_k,\pmb\ell_k)1_{\{\zeta_k\leq t<\zeta_{k+1}\}}+Z_t^n(\pmb\zeta_n,\pmb\ell_n)1_{\{\zeta_n\leq t\}},\end{equation}
for some $Z^k\in\mathcal{O}_{\mathbb{F}}(\Delta_k,E^k)$, for $k=0,\dotso,n$. A similar decomposition result holds for any $\mathbb{G}$-predictable process.
\end{lemma}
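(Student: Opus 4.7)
The plan is to prove the two implications separately, with the non-trivial content lying in the ``only if'' direction and resting on Theorem~\ref{theorem 3-1}.

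For the sufficiency (``if'') direction, I would verify each summand in the proposed decomposition is $\mathbb{G}$-optional. The indicators $1_{\{t<\zeta_1\}}$, $1_{\{\zeta_k\leq t<\zeta_{k+1}\}}$, and $1_{\{\zeta_n\leq t\}}$ are $\mathbb{G}$-adapted with right-continuous paths, hence $\mathbb{G}$-optional. For the indexed component $Z^k(\pmb\zeta_k,\pmb\ell_k)$, restricted to $\{\zeta_k\leq t\}$ on which $(\pmb\zeta_k,\pmb\ell_k)$ is $\mathcal{G}^k_{\zeta_k}$-measurable, I would run a monotone class argument over $\mathcal{O}_\mathbb{F}(\Delta_k,E^k)$ starting from product generators $Y_t(\omega)\prod_{i=1}^k g_i(\theta_i,e_i)$ with $Y\in\mathcal{O}_\mathbb{F}$ and $g_i$ Borel. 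After the substitution $(\theta_i,e_i)\mapsto(\zeta_i,\ell_i)$, such a generator becomes the product of a $\mathbb{G}$-optional process and a $\mathcal{G}^k_{\zeta_k}$-measurable random variable, hence $\mathbb{G}$-optional on $\{\zeta_k\leq t\}$; closure under monotone limits extends this to all of $\mathcal{O}_\mathbb{F}(\Delta_k,E^k)$.

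For the necessity (``only if'') direction, let $\mathcal{M}$ denote the vector space of bounded $\mathbb{G}$-optional processes admitting a decomposition of the form \eqref{2-2}. A standard monotone class theorem reduces the task to showing $\mathcal{M}$ is closed under bounded pointwise convergence and contains a generating family for $\mathcal{O}_\mathbb{G}$. Since $\mathcal{O}_\mathbb{G}$ is generated by indicators of stochastic intervals $\{(t,\omega):t\leq\tau(\omega)\}$ for $\mathbb{G}$-stopping times $\tau$, I would apply Theorem~\ref{theorem 3-1}: given $\tau\sim(\tau^0,\dotso,\tau^n)$, Proposition~\ref{prop 3-3} together with the inequalities $\tau^k\geq\zeta_k$ shows that on each stratum $\{\zeta_k\leq t<\zeta_{k+1}\}$ the event $\{t\leq\tau\}$ coincides with $\{t\leq\tau^k\}$, yielding
$$1_{\{t\leq\tau\}}=1_{\{t\leq\tau^0\}}1_{\{t<\zeta_1\}}+\sum_{k=1}^{n-1}1_{\{t\leq\tau^k\}}1_{\{\zeta_k\leq t<\zeta_{k+1}\}}+1_{\{t\leq\tau^n\}}1_{\{\zeta_n\leq t\}}.$$
An additional step is then to realize each $\tau^k$ in the parametrized form $\tau^k(\pmb\zeta_k,\pmb\ell_k)$ with $\tau^k(\pmb\theta_k,\pmb e_k)$ an $\mathbb{F}$-stopping time depending jointly measurably on $(\omega,\pmb\theta_k,\pmb e_k)$, which can be done by iterating Theorem~\ref{theorem 3-1} inside $\mathbb{G}^k$ to peel off the conditioning on $\zeta_1,\dotso,\zeta_k,\ell_1,\dotso,\ell_k$ one jump at a time.

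The main obstacle will be this parametrization step and the attendant non-uniqueness of the decomposition: the slice $Z^k(\pmb\theta_k,\pmb e_k)$ is only pinned down on the support of $(\pmb\zeta_k,\pmb\ell_k)$, so closure of $\mathcal{M}$ under bounded pointwise convergence requires a measurable selection to extend pointwise limits to a joint representative in $\mathcal{O}_\mathbb{F}(\Delta_k,E^k)$. This is exactly where the non-right-continuity of $\mathbb{G}$ plays an essential role, as highlighted in the introduction: a right-continuous enlargement blocks the monotone class argument in the optional case, while our non-right-continuous setup lets the strata $\{\zeta_k\leq t<\zeta_{k+1}\}$ carry $\mathbb{F}$-optional information conditional on the jump values, so the reduction goes through.
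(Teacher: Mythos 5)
The paper itself does not prove this lemma --- it is recalled verbatim from \cite{pham1} --- so your argument has to stand on its own, and while the ``if'' half (monotone class over the index $(\pmb\theta_k,\pmb e_k)$ plus right-continuity of the stratum indicators) is fine, the ``only if'' half has a concrete error and a missing idea. The displayed identity for $1_{\{t\leq\tau\}}$ conflates the stratification by the position of $t$ with the stratification by the position of $\tau$ in Proposition~\ref{prop 3-3}. On $\{\zeta_k\leq t<\zeta_{k+1}\}$ the correct statement is
$$\{t\leq\tau\}=\{t\leq\tau^k\}\cap\{\zeta_k\leq\tau\}=\{t\leq\tau^k\}\cap\{\tau^0\geq\zeta_1\}\cap\dotso\cap\{\tau^{k-1}\geq\zeta_k\},$$
and the extra factor cannot be dropped: on $\{\tau<\zeta_k\}$ the component $\tau^k$ is not pinned down by \eqref{2-3} (it may exceed $t$), and even for the ``best'' version the constraint \eqref{eq:tgtjt} forces $\tau^k\geq\zeta_k$, so at $t=\zeta_k$ your right-hand side equals $1$ while the left-hand side is $0$. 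Repairing the formula requires expressing $1_{\{\tau^0\geq\zeta_1\}}\dotsm 1_{\{\tau^{k-1}\geq\zeta_k\}}$ in the indexed form $Z^k_t(\pmb\zeta_k,\pmb\ell_k)$ with $Z^k\in\mathcal{O}_\mathbb{F}(\Delta_k,E^k)$, i.e.\ it requires the parametrized, jointly $\mathcal{B}(\Delta_k)\otimes\mathcal{B}(E^k)$-measurable representation of $\tau^0,\dotso,\tau^{k-1}$. But in this framework that parametrization (the unnumbered lemma following Lemma~\ref{lemma 2-1}) is \emph{deduced from} Lemma~\ref{lemma 2-1}; Theorem~\ref{theorem 3-1} only produces abstract $\mathbb{G}^k$-stopping times with no measurable dependence on the jump data, and ``iterating Theorem~\ref{theorem 3-1} inside $\mathbb{G}^k$'' never creates that joint measurability. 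So your route is either circular or leaves unresolved exactly the point you flag as the main obstacle --- which is the actual content of the lemma.

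Two further steps are asserted without justification. First, you take for granted that $\mathcal{O}_\mathbb{G}$ is generated by the stochastic intervals $\{(t,\omega):t\leq\tau(\omega)\}$. Here $\mathcal{O}_\mathbb{G}$ is by definition generated by the right-continuous $\mathbb{G}$-adapted processes, and the classical equivalence with the $\sigma$-field generated by stochastic intervals is proved under the usual conditions via hitting-time/d\'ebut arguments that use right-continuity of the filtration --- precisely what fails for $\mathbb{G}$ here, as the paper stresses; this reduction would need its own proof. Second, closure of your class $\mathcal{M}$ under bounded pointwise limits needs the slices $Z^k$ of the approximating processes to be chosen consistently (they are only determined on the support of $(\pmb\zeta_k,\pmb\ell_k)$ and for $t\geq\theta_k$), a measurable-selection issue you name but do not solve. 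The proof in \cite{pham1} (as in Jeulin's predictable-case argument) avoids all of this by running the monotone class argument directly on generators of $\mathcal{O}_\mathbb{G}$ built from the structure $\mathcal{G}_t=\mathcal{F}_t\vee\mathcal{D}^n_t$ --- the same kind of $\sigma$-algebra computation as in Lemma~\ref{lemma 2-4} --- so that the process decomposition comes first and the stopping-time parametrization is a consequence, not an input.
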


We will use the notation $Z\sim(Z^0,\dotso,Z^n)$ for the $\mathbb{G}$-optional (resp. predictable) process  $Z$ from the decomposition \eqref{2-2}. Let $Z\sim(Z^0,\dotso,Z^n)$ be a $\mathbb{G}$-optional process, and $\tau\sim(\tau^0,\dotso,,\tau^n)$  be a $\mathbb{G}$-stopping time. Then from \leref{lemma 2-1} and \prref{prop 3-3}, $Z_{\tau}$ has the decomposition:
\begin{equation}\label{3-1} Z_\tau=Z_{\tau^0}^0 1_{\{\tau<\zeta_1\}}+\sum_{k=1}^{n-1}Z_{\tau^k}^k 1_{\{\zeta_k\leq \tau< \zeta_{k+1}\}}+Z_{\tau^n}^n 1_{\{\zeta_n\leq\tau\}}.\end{equation} 

The following lemma will be used for the rest of the paper:
\begin{lemma}
$\tau^k(\pmb\zeta_k,\pmb\ell_k)$ is a $\mathbb{G}^k$-stopping time satisfying $\tau^k\geq\zeta_k$ if and only if for any fixed $(\pmb\theta_k,\pmb e_k)\in\Delta_k\times E^k$, $\tau^k(\pmb\theta_k,\pmb e_k)$ is an $\mathbb{F}$-stopping time satisfying $\tau^k(\pmb\theta_k,\pmb e_k)\geq\theta_k$ and $\tau^k(\pmb\theta_k,\pmb e_k)$ is measurable with respect to $(\pmb\theta_k,\pmb e_k)$.
\end{lemma}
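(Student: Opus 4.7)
The plan is to prove the two implications separately, using the optional decomposition from \leref{lemma 2-1} (applied with $k$ jumps in place of $n$) as the main ingredient for the nontrivial direction.

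For the backward implication ($\Leftarrow$), given the indexed family, I would define $\tau^k := \tau^k(\pmb\zeta_k, \pmb\ell_k)$ by substitution. The bound $\tau^k \geq \zeta_k$ is immediate because $\tau^k(\pmb\theta_k, \pmb e_k) \geq \theta_k$ for every parameter value. To verify the $\mathbb{G}^k$-stopping time property, I would note that $\tau^k \geq \zeta_k$ forces $\{\tau^k \leq t\} \subset \{\zeta_k \leq t\}$, and on this latter set the random data $\pmb\zeta_k, \pmb\ell_k$ is $\mathcal{D}_t^k \subset \mathcal{G}_t^k$-measurable, in the same spirit as the computations carried out in the proof of \leref{lemma 2-4}. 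Combining this with the fiberwise $\mathbb{F}$-stopping time property and the joint measurability in $(\pmb\theta_k, \pmb e_k)$ places $\{\tau^k \leq t\} \cap \{\zeta_k \leq t\}$ in $\mathcal{G}_t^k$ by a composition argument.

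For the forward implication ($\Rightarrow$), I would apply \leref{lemma 2-1} to the indicator process $X_t := 1_{\{\tau^k \leq t\}}$, which is right-continuous and $\mathbb{G}^k$-adapted since $\tau^k$ is a $\mathbb{G}^k$-stopping time, hence $\mathbb{G}^k$-optional. The decomposition reads
\begin{equation*}
X_t = X_t^0 1_{\{t < \zeta_1\}} + \sum_{j=1}^{k-1} X_t^j(\pmb\zeta_j, \pmb\ell_j) 1_{\{\zeta_j \leq t < \zeta_{j+1}\}} + X_t^k(\pmb\zeta_k, \pmb\ell_k) 1_{\{\zeta_k \leq t\}}
\end{equation*}
with $X^j \in \mathcal{O}_\mathbb{F}(\Delta_j, E^j)$. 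Because $\tau^k \geq \zeta_k$ forces $X_t = 0$ on $\{t < \zeta_k\}$, only the last summand contributes nontrivially and $X_t = X_t^k(\pmb\zeta_k, \pmb\ell_k)$ on $\{\zeta_k \leq t\}$. I would then set
\begin{equation*}
\tau^k(\pmb\theta_k, \pmb e_k) := \inf\bigl\{t \geq \theta_k : X_t^k(\pmb\theta_k, \pmb e_k) \geq 1\bigr\},
\end{equation*}
which as the debut into a closed set by an $\mathbb{F}$-optional process is an $\mathbb{F}$-stopping time bounded below by $\theta_k$; the joint measurability in $(\pmb\theta_k, \pmb e_k)$ is inherited from $X^k \in \mathcal{O}_\mathbb{F}(\Delta_k, E^k)$.

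The main technical obstacle is verifying that this forward construction reproduces the original $\tau^k$ after substituting $\pmb\zeta_k$ and $\pmb\ell_k$. A priori $X^k(\cdot, \pmb\theta_k, \pmb e_k)$ is only a generic $\mathbb{F}$-optional process and need not be a $\{0,1\}$-valued nondecreasing one for every parameter value, even though after substitution it agrees with the indicator $X$. I would address this by showing that for almost every $(\pmb\theta_k, \pmb e_k)$ with respect to the joint law of $(\pmb\zeta_k, \pmb\ell_k)$ (which admits the conditional density from the Standing Assumption), $X^k(\cdot, \pmb\theta_k, \pmb e_k)$ is the indicator of $\{\tau^k(\cdot, \pmb\theta_k, \pmb e_k) \leq \cdot\}$ on $[\theta_k, \infty)$, and then modifying $X^k$ on the exceptional parameter-null set (which does not affect the substitution value) so that the constructed $\tau^k(\pmb\theta_k, \pmb e_k)$ is globally well-defined as an $\mathbb{F}$-stopping time satisfying $\tau^k(\pmb\theta_k, \pmb e_k) \geq \theta_k$ for every $(\pmb\theta_k, \pmb e_k)$.
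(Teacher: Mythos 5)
Your backward implication is essentially the paper's: joint measurability gives $1_{\{\tau^k(\pmb\theta_k,\pmb e_k)\le t\}}\cdot 1_{\{\theta_k\le t\}}\in\mathcal{O}_\mathbb{F}(\Delta_k,E^k)$, and the ``composition argument'' you gesture at is exactly the sufficiency direction of \leref{lemma 2-1} (applied with $n=k$), which makes the substituted process $\mathbb{G}^k$-optional and hence $\{\tau^k(\pmb\zeta_k,\pmb\ell_k)\le t\}\in\mathcal{G}_t^k$; the appeal to \leref{lemma 2-4} is tangential but harmless.

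The forward implication, however, has a genuine gap. The lemma asserts that the given sections $\tau^k(\pmb\theta_k,\pmb e_k)$ --- in the paper's substitution notation, the random time evaluated at $\pmb\zeta_k=\pmb\theta_k$, $\pmb\ell_k=\pmb e_k$ --- are $\mathbb{F}$-stopping times with $\tau^k(\pmb\theta_k,\pmb e_k)\ge\theta_k$ for \emph{every} fixed $(\pmb\theta_k,\pmb e_k)$. Your plan instead builds a new family as the debut of $X^k(\pmb\theta_k,\pmb e_k)$ into $\{x\ge 1\}$, identifies it with the original only for a.e.\ parameter under the law of $(\pmb\zeta_k,\pmb\ell_k)$ (invoking the conditional density, which this purely structural statement does not need), and then modifies $X^k$ on the exceptional parameter set. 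That establishes only that \emph{some} version of the family, agreeing with $\tau^k$ after substituting the random times and marks, has stopping-time sections; it says nothing about the original sections on the exceptional set, so it does not yield the ``for any fixed $(\pmb\theta_k,\pmb e_k)$'' claim. The detour is also unnecessary: the decomposition in \leref{lemma 2-1} is exact in this framework, so for each fixed $(\pmb\theta_k,\pmb e_k)$ the section of the indicator $1_{\{\tau^k(\pmb\zeta_k,\pmb\ell_k)\le t\}}$ is literally $1_{\{\tau^k(\pmb\theta_k,\pmb e_k)\le t\}}$, which equals $X_t^k(\pmb\theta_k,\pmb e_k)$ for $t\ge\theta_k$ and $0$ for $t<\theta_k$ (since $\tau^k\ge\zeta_k$); in particular this section is $\mathbb{F}$-optional for every parameter value, so $\{\tau^k(\pmb\theta_k,\pmb e_k)\le t\}\in\mathcal{F}_t$ directly, and the measurability in $(\pmb\theta_k,\pmb e_k)$ is inherited from $X^k\in\mathcal{O}_\mathbb{F}(\Delta_k,E^k)$. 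No debut theorem, no null-set modification, and no density hypothesis are needed --- this direct identification is how the paper argues.
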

\begin{proof}
If $\tau^k(\pmb\theta_k,\pmb e_k)$ is an $\mathbb{F}$-stopping time satisfying $\tau^k(\pmb\theta_k,\pmb e_k)\geq\theta_k$ and is measurable with respect to $(\pmb\theta_k,\pmb e_k)$, then $1_{\{\tau^k(\pmb\theta_k,\pmb e_k)\leq t\}}\cdot 1_{\{\theta_k\leq t\}}\in\mathcal{O}_\mathbb{F}(\Delta_k,E^k)$. By \leref{lemma 2-1} (here $n=k$), $1_{\{\tau^k(\pmb\zeta_k,\pmb\ell_k)\leq t\}}=1_{\{\tau^k(\pmb\zeta_k,\pmb\ell_k)\leq t\}}\cdot 1_{\{\zeta_k\leq t\}}$ is a $\mathbb{G}^k$-optional process. Then $\{\tau^k(\pmb\zeta_k,\pmb\ell_k)\leq t\}=\big\{1_{\{\tau^k(\pmb\zeta_k,\pmb\ell_k)\leq t\}}=1\big\}\in\mathcal{G}_t^k$. Hence, $\tau^k(\pmb\zeta_k,\pmb\ell_k)$ is a $\mathbb{G}^k$-stopping time. Conversely, if $\tau^k(\pmb\zeta_k,\pmb\ell_k)$ is a $\mathbb{G}^k$-stopping time, then the $\mathbb{G}^k$-optional process $1_{\{\tau^k(\pmb\zeta_k,\pmb\ell_k)\leq t\}}$ has the representation from \leref{lemma 2-1}. Thus, for fixed $(\pmb\theta_k,\pmb e_k)$, $1_{\{\tau^k(\pmb\theta_k,\pmb e_k)\leq t\}}$ is $\mathbb{F}$-optional, which implies that $\tau^k(\pmb\theta_k,\pmb e_k)$ is an $\mathbb{F}$-stopping time.
\end{proof}

The following theorem is the main result of this section.
\begin{theorem}\label{theorem 4-3}
Let $Z\sim(Z^0,,\dotso,Z^n)$ be a nonnegative (or bounded), right continuous $\mathbb{G}$-optional process, and $\tau\sim(\tau^0,\dotso,\tau^n)$ be a finite $\mathbb{G}$-stopping time satisfying $\tau\leq T$, where $T\in[0,\infty]$ is a constant. The expectation $\displaystyle\mathbb{E}\big[Z_\tau\big]$ can be computed by a backward induction as
\begin{equation}\label{3-2}\mathbb{E}\big[Z_\tau\big]=J_0,\notag\end{equation}
where $J_0,\dotso,J_n$ are given by
\begin{eqnarray}\label{3-3}&&J_n(\pmb\theta_n,\pmb e_n)=\mathbb{E}\Big[Z_{\tau^n}^n\alpha_{\tau^n}^n(\pmb\theta_n,\pmb e_n)\big|\mathcal{F}_{\theta_n}\Big],\ \ \ (\pmb\theta_n,\pmb e_n)\in\Delta_n(T)\times E^n,\\
\label{3-4} &&J_k(\pmb\theta_k,\pmb e_k)=\mathbb{E}\bigg[Z_{\tau^k}^k\alpha_{\tau^k}^k(\pmb\theta_k,\pmb e_k)+\int_{\theta_k}^{\tau^k(\pmb\theta_k,\pmb e_k)\wedge T}\int_EJ_{k+1}(\pmb\theta_k,\theta_{k+1},\pmb e_k,e_{k+1})\eta(de_{k+1})d\theta_{k+1}\Big|\mathcal{F}_{\theta_k}\bigg],\hspace{-0.3cm}
\end{eqnarray}
$(\pmb\theta_k,\pmb e_k)\in\Delta_k(T)\times E^k$, for $k=0,\dotso,n-1$.
\end{theorem}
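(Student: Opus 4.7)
The strategy is to split $\mathbb{E}[Z_\tau]$ via the decomposition \eqref{3-1}, rewrite each piece using the conditional density hypothesis \eqref{3-6}, and identify the result with $J_0$ through a backward induction mirroring \eqref{3-4}. Combining \eqref{3-1} with Proposition~\ref{prop 3-3} yields
\begin{equation*}
\mathbb{E}[Z_\tau]=\mathbb{E}\big[Z^0_{\tau^0}\,1_{\{\tau^0<\zeta_1\}}\big]+\sum_{k=1}^{n-1}\mathbb{E}\big[Z^k_{\tau^k}\,1_{A_k}\big]+\mathbb{E}\big[Z^n_{\tau^n}\,1_{A_n}\big],
\end{equation*}
where $A_k=\{\tau^0\ge\zeta_1\}\cap\dotso\cap\{\tau^{k-1}\ge\zeta_k\}\cap\{\tau^k<\zeta_{k+1}\}$ for $1\leq k\leq n-1$, and $A_n=\{\tau^0\ge\zeta_1\}\cap\dotso\cap\{\tau^{n-1}\ge\zeta_n\}$.

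The central technical step is a conditional-density identity. Using the interpretation $\alpha^k_t(\pmb\theta_k,\pmb e_k)\,d\pmb\theta_k\,\eta(d\pmb e_k)=\mathbb{P}[\pmb\zeta_k\in d\pmb\theta_k,\pmb\ell_k\in d\pmb e_k,\zeta_{k+1}>t\mid\mathcal{F}_t]$ (with the convention $\zeta_{n+1}\equiv\infty$ handling the $k=n$ case), I would show that for any jointly measurable family $\sigma(\pmb\theta_k,\pmb e_k)$ of $\mathbb{F}$-stopping times with $\sigma\geq\theta_k$, and any bounded $F(\pmb\theta_k,\pmb e_k;\omega)$ with $F(\pmb\theta_k,\pmb e_k;\cdot)\in\mathcal{F}_{\sigma(\pmb\theta_k,\pmb e_k)}$,
\begin{equation*}
\mathbb{E}\big[F(\pmb\zeta_k,\pmb\ell_k)\,1_{\{\sigma(\pmb\zeta_k,\pmb\ell_k)<\zeta_{k+1}\}}\big]=\mathbb{E}\bigg[\int_{\Delta_k(T)\times E^k}F(\pmb\theta_k,\pmb e_k)\,\alpha^k_{\sigma(\pmb\theta_k,\pmb e_k)}(\pmb\theta_k,\pmb e_k)\,d\pmb\theta_k\,\eta(d\pmb e_k)\bigg].
\end{equation*}
The proof would handle $\sigma$ taking only finitely many values first, by partitioning on those values and applying \eqref{3-6} on each piece, and then extend to general $\sigma$ by monotone approximation from above, using right-continuity of $t\mapsto\alpha_t$ together with dominated convergence (justified by \eqref{u.i.}).

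With this identity in hand, I apply it to each summand. By Proposition~\ref{prop 3-3}, $1_{A_k}=1_{D_k}\,1_{\{\tau^k<\zeta_{k+1}\}}$ for $k\geq 1$, where $D_k=\{\theta_1\leq\tau^0\wedge T,\dotso,\theta_k\leq\tau^{k-1}\wedge T\}$ is $\mathcal{F}_{\theta_k}$-measurable (the $\wedge T$ truncation is legitimate by Proposition~\ref{prop 2-6}). Absorbing $1_{D_k}$ into $F$ produces
\begin{equation*}
\mathbb{E}\big[Z^k_{\tau^k}\,1_{A_k}\big]=\mathbb{E}\bigg[\int_{D_k}Z^k_{\tau^k(\pmb\theta_k,\pmb e_k)}(\pmb\theta_k,\pmb e_k)\,\alpha^k_{\tau^k(\pmb\theta_k,\pmb e_k)}(\pmb\theta_k,\pmb e_k)\,d\pmb\theta_k\,\eta(d\pmb e_k)\bigg],
\end{equation*}
with the analogous formula (no $\zeta_{n+1}$ indicator) at $k=n$. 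Backward induction on $k$ then establishes
\begin{equation*}
\sum_{j=k}^{n}\mathbb{E}\big[Z^j_{\tau^j}\,1_{A_j}\big]=\mathbb{E}\bigg[\int_{D_k}J_k(\pmb\theta_k,\pmb e_k)\,d\pmb\theta_k\,\eta(d\pmb e_k)\bigg].
\end{equation*}
The base case $k=n$ follows from the previous display combined with \eqref{3-3} and the tower property (conditioning on $\mathcal{F}_{\theta_n}$). The inductive step combines the previous display at level $k$ with the inductive hypothesis for $\sum_{j=k+1}^n$, the set identity $D_{k+1}=D_k\cap\{\theta_k\leq\theta_{k+1}\leq\tau^k(\pmb\theta_k,\pmb e_k)\wedge T\}$, Fubini, the tower property at $\mathcal{F}_{\theta_k}$, and the recursion \eqref{3-4}. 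Setting $k=0$ (where $D_0$ is a singleton) yields $\mathbb{E}[Z_\tau]=J_0$.

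The principal obstacle is the conditional-density identity above: because $\sigma(\pmb\theta_k,\pmb e_k)$ depends on both the sample point and the parameter $(\pmb\theta_k,\pmb e_k)$, it is not itself an $\mathbb{F}$-stopping time on the enlarged parameter space, and one cannot simply invoke optional sampling for the (super)martingale $\alpha^k$. The discretization-plus-right-continuity argument, combined with the joint measurability of $\tau^k$ in $(\pmb\theta_k,\pmb e_k)$ provided by the lemma immediately preceding the theorem and the $L^1$-domination supplied by \eqref{u.i.}, is what makes the passage from deterministic to random, parameter-dependent stopping times go through.
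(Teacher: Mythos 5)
Your proposal is correct, but it takes a recognizably different route through the same ingredients as the paper. The paper's proof unfolds the recursion \eqref{3-3}--\eqref{3-4} (written out only for $n=2$) into the single closed-form identity $J_0=\mathbb{E}\big[\int_{E^n}\int_{\Delta_n}Z_\tau(\pmb\theta_n,\pmb e_n)\,\alpha_\tau(\pmb\theta_n,\pmb e_n)\,d\theta_1\dotso d\theta_n\,\eta(de_1)\dotso\eta(de_n)\big]$, using Proposition~\ref{prop 3-3} to assemble the indicators, and then shows $\mathbb{E}[Z_\tau]$ equals this same parameter integral by discretizing the \emph{global} $\mathbb{G}$-stopping time $\tau$ dyadically from above (the discrete case resting on Proposition 2.1 of \cite{pham1}) and passing to the limit via right continuity of $Z$ and $\alpha$, the bound \eqref{u.i.}, and a truncation $Z\wedge N$. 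You instead keep the recursion intact: you split $\mathbb{E}[Z_\tau]$ scenario by scenario through \eqref{3-1} and Proposition~\ref{prop 3-3}, prove a levelwise conditional-density identity in which the survival density $\alpha^k$ is evaluated at the parameter-indexed $\mathbb{F}$-stopping time $\tau^k$, and then run an honest backward induction that reproduces \eqref{3-4} term by term. The analytic core is shared — the density hypothesis \eqref{3-6} at deterministic times, dyadic approximation from above, right continuity, domination by \eqref{u.i.} — but you deploy it levelwise under $\mathbb{F}$ while the paper deploys it once under $\mathbb{G}$; your organization handles general $n$ without the combinatorial unfolding and makes the survival interpretation of $\alpha^k$ explicit. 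Two small points you should record to make the write-up airtight, neither of which is a genuine gap: (i) right continuity of $t\mapsto\alpha^k_t$ is not assumed but must be deduced from that of $\alpha$ by dominated convergence with the majorant $\sup_t\alpha_t$, which is a.s. integrable in the remaining parameters by \eqref{u.i.}; this is what justifies $\alpha^k_{\sigma_m}\to\alpha^k_{\sigma}$ in your limiting step, whereas the paper only ever evaluates the assumed-right-continuous $\alpha$ itself at a stopping time; and (ii) your key identity is stated for bounded $F$, so the nonnegative unbounded case of the theorem needs the same truncation $Z\wedge N$ followed by monotone convergence that the paper uses. With these noted, the proposal is complete.
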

\begin{proof}
For the sake of simplicity, let us assume $n=2$. Using \eqref{3-3} and \eqref{3-4},  plugging $J_2$ into $J_1$, and then $J_1$ into $J_0$, we obtain
\begin{eqnarray}J_0&=&\mathbb{E}\Big[Z_{\tau^0}^0\alpha_{\tau_0}^0\Big]+\mathbb{E}\bigg[\int_0^{\tau^0\wedge T}\int_E\mathbb{E}\Big[Z_{\tau^1(\theta_1,e_1)}^1\cdot\alpha_{\tau^1(\theta_1, e_1)}^1\big|\mathcal{F}_{\theta_1}\Big]\eta(de_1)d\theta_1\bigg]\notag\\
&+&\mathbb{E}\Bigg[\int_0^{\tau_0\wedge T}\int_E\mathbb{E}\bigg[\int_0^{\tau_1(\theta_1,e_1)\wedge T}\int_E\mathbb{E}\Big[Z_{\tau^2(\theta_1,\theta_2,e_1,e_2)}^2\cdot\alpha_{\tau^2}^2\big|\mathcal{F}_{\theta_2}\Big]\eta(de_2)d\theta_2\Big|\mathcal{F}_{\theta_1}\bigg]\eta(de_1)d\theta_1\Bigg].\notag
\end{eqnarray}
On the right side of the equation above, let us denote the fist term by I, the second term by II, and the third term by III. We can show that
\begin{eqnarray}
\text{I}&=&\mathbb{E}\Big[\int_{E^2}\int_{\Delta_2}Z_{\tau^0}^0\cdot 1_{\{\theta_1>\tau^0\}}\cdot\alpha_{\tau^0}(\theta_1,\theta_2,e_1,e_2)\ d\theta_1 d\theta_2 \eta(de_1)\eta(de_2)\Big],\notag\\
\text{II}&=&\mathbb{E}\bigg[\int_{E^2}\int_{\Delta_2}Z_{\tau^1(\theta_1,e_1)}^1\cdot 1_{\{\theta_1\leq T\}}\cdot 1_{\{\tau^0\geq\theta_1\}\cap\{\tau^1(\theta_1,e_1)<\theta_2\}}\cdot\alpha_{\tau^1}\ d\theta_1 d\theta_2 \eta(de_1) \eta(de_2)\bigg],\notag\\
\text{III}&=&\mathbb{E}\bigg[\int_{E^2}\int_{\Delta_2}Z_{\tau 2(\theta_1,\theta_2,e_1,e_2)}^2\cdot 1_{\{\theta_1,\theta_2\leq T\}}\cdot 1_{\{\tau^0\geq\theta_1\}\cap\{\tau^1(\theta_1,e_1)\geq\theta_2\}}\cdot\alpha_{\tau^2}\ d\theta_1 d\theta_2 \eta(de_1) \eta(de_2)\bigg].\notag
\end{eqnarray}
For fixed $(\theta_1,\theta_2,e_1,e_2)\in\Delta_2\times E^2$, from \prref{prop 3-3}, we have $\{\tau^0\geq\theta_1\}\cap\{\tau^1<\theta_2\}=\{\theta_1\leq\tau<\theta_2\}\subset\{\theta_1\leq T\}$, and 
$\{\tau^0\geq\theta_1\}\cap\{\tau^1\geq\theta_2\}=\{\theta_2\leq\tau\}\subset\{\theta_1,\theta_2\leq T\}$. Hence,
\begin{eqnarray}
&&Z_\tau(\theta_1,\theta_2,e_1,e_2)=Z_{\tau^0}^0\cdot 1_{\{\tau^0<\theta_1\}}+Z_{\tau^1}^1\cdot 1_{\{\tau^0\geq\theta_1\}}\cdot 1_{\{\tau^1<\theta_2\}}+Z_{\tau^2}^2 \cdot 1_{\{\tau^0\geq\theta_1\}}\cdot 1_{\{\tau^1\geq\theta_2\}}\notag\\
&&\hspace{1.5cm}=Z_{\tau^0}^0\cdot 1_{\{\tau^0<\theta_1\}}+Z_{\tau^1}^1\cdot 1_{\{\theta_1\leq T\}}\cdot 1_{\{\tau^0\geq\theta_1\}\cap\{\tau^1<\theta_2\}}+Z_{\tau^2}^2 \cdot 1_{\{\theta_1,\theta_2\leq T\}}\cdot 1_{\{\tau^0\geq\theta_1\}\cap\{\tau^1\geq\theta_2\}}.\notag
\end{eqnarray}
Therefore, we have
$$J_0=\text{I}+\text{II}+\text{III}=\mathbb{E}\bigg[\int_{E^2}\int_{\Delta_2} Z_\tau(\theta_1,\theta_2,e_1,e_2)\cdot\alpha_\tau(\theta_1,\theta_2,e_1,e_2)\ d\theta_1d\theta_2\eta(de_1)\eta(de_2)\bigg].$$
We will show in two steps that $J_0=\mathbb{E}[Z_\tau]$.\\
\textbf{Step 1}: If $\displaystyle\tau=\sum_{k=0}^\infty a_k 1_{A_k}$, where $0\leq a_0<a_1\dotso<\infty$, and $A_k\in\mathcal{G}_{a_k},\ k=0,1,\dotso$, then
\begin{eqnarray}&&\mathbb{E}[Z_{\tau}]=\sum_{k=0}^\infty\mathbb{E}\big[Z_{a_k}1_{A_k}\big]\notag\\
&&=\sum_{k=0}^\infty\mathbb{E}\left[\int_{E^2}\int_{\Delta_2}Z_{a_k}(\theta_1,\theta_2,e_1,e_2)1_{A_k}(\theta_1,\theta_2,e_1,e_2)\alpha_{a_k}(\theta_1,\theta_2,e_1,e_2)d\theta_1d\theta_2\eta(de_1)\eta(de_2)\right]\notag\\
&&=\mathbb{E}\left[\int_{E^2}\int_{\Delta_2}\left(\sum_{k=0}^\infty Z_{a_k}(\theta_1,\theta_2,e_1,e_2)1_{A_k}(\theta_1,\theta_2,e_1,e_2)\alpha_{a_k}(\theta_1,\theta_2,e_1,e_2)\right)d\theta_1d\theta_2\eta(de_1)\eta(de_2)\right]\notag\\
&&=\mathbb{E}\bigg[\int_{E^2}\int_{\Delta^2} Z_\tau(\theta_1,\theta_2,e_1,e_2)\cdot\alpha_{\tau}(\theta_1,\theta_2,e_1,e_2)\ d\theta_1d\theta_2\eta(de_1)\eta(de_2)\bigg],\notag
\end{eqnarray}
where the second equality above follows from Proposition 2.1 in \cite{pham1}.\\
\textbf{Step 2}: In general, let $\tau$ be any finite $\mathbb{G}$-stopping time. Define
$$\tau_m:=\sum_{j=0}^\infty\frac{j+1}{2^m} \cdot 1_{\{\frac{j}{2^m}\leq\tau<\frac{j+1}{2^m}\}},\ \ m=1,2,\dotso$$
For fixed $N\in(0,\infty)$, Step 1 implies that
$$\mathbb{E}\big[Z_{\tau^m}\wedge N\big]=\mathbb{E}\bigg[\int_{E^2}\int_{\Delta^2} \big(Z_{\tau^m}\big(\theta_1,\theta_2,e_1,e_2)\wedge N\big)\cdot\alpha_{\tau^m}(\theta_1,\theta_2,e_1,e_2)\ d\theta_1d\theta_2\eta(de_1)\eta(de_2)\bigg].$$
Thanks to \eqref{u.i.} and the right continuity of $Z_t$ and $\alpha_t$, by sending $m\rightarrow\infty$, we get
$$\mathbb{E}\big[Z_{\tau}\wedge N\big]=\mathbb{E}\bigg[\int_{E^2}\int_{\Delta^2} \big(Z_{\tau}\big(\theta_1,\theta_2,e_1,e_2)\wedge N\big)\cdot\alpha_{\tau}(\theta_1,\theta_2,e_1,e_2)\ d\theta_1d\theta_2\eta(de_1)\eta(de_2)\bigg].$$
Then letting $N\rightarrow\infty$, the result follows.
\end{proof}
\begin{remark}
When the Brownian motion and the jumps are independent, \eqref{u.i.} and the right continuity of $\alpha_t$ in the Standing Assumption trivially holds. In this case, \thref{theorem 4-3} still holds if the assumption of the right continuity of $Z_t$ is removed. In fact, it follows directly from the expectation under the product probability measure that 
$$J_0=\mathbb{E}\bigg[\int_{E^2}\int_{\Delta_2} Z_\tau(\theta_1,\theta_2,e_1,e_2)\cdot\alpha(\theta_1,\theta_2,e_1,e_2)\ d\theta_1d\theta_2\eta(de_1)\eta(de_2)\bigg]=\mathbb{E}[Z_{\tau}].$$
The same applies for \thref{theorem 4-2} and \prref{prop 4-3}.
\end{remark}

\section{Decomposition of $\mathbb{G}$-controller-stopper problems}\label{sec:control}

\thref{theorem 4-2} and \prref{prop 4-3} are the main results for this section, which  decompose the global $\mathbb{G}$-controller-stopper problems into a backward induction, where each step is a controller-stopper problem with respect to the Brownian filtration. 

A control is a $\mathbb{G}$-predictable process $\pi\sim(\pi^ 0,\dotso,\pi^n)$, where $\pi^k\in\mathcal{P}_{\mathbb{F}}(\Delta_k,E^k)$ is valued in a set $A^k$ in some Euclidian space, for $k=0,\dotso,n$. We denote by $\mathcal{P}_{\mathbb{F}}(\Delta_k,E^k;A^k)$ the set of elements in $\mathcal{P}_{\mathbb{F}}(\Delta_k,E^k)$ valued in $A^k,\ k=0,\dotso,n$. We require that all the $\mathbb{G}$-stopping times we consider here are valued in $[0,T]$, where $T\in(0,\infty]$ is a given constant.  A trading strategy is a pair of a control and a $\mathbb{G}$-stopping time. We will use the notation $(\pi,\tau)\sim(\pi^k,\tau^k)_{k=0}^n$ for the trading strategy if $\pi\sim(\pi^0,\dotso,\pi^n)$ and $\tau\sim(\tau^0,\dotso,\tau^n)$. A trading strategy $(\pi,\tau)\sim(\pi^k,\tau^k)_{k=0}^n$ is admissible, if for $k=0,\dotso,n, \ (\pi^k,\tau^k)\in\mathcal{A}^k\times\mathcal{T}^k$, where $\mathcal{A}^k$ is some seperable metric space of $\mathcal{P}(\Delta_k,E^k;A^k)$, and $\mathcal{T}^k$ is some set of finite $\mathbb{G}^k$-stopping times. By \prref{prop 2-6}, we let $\mathcal{T}^k$ be such that for any $\tau^k\in\mathcal{T}^k$, $\tau^k(\pmb\theta_k,\pmb e_k)\leq T$ whenever $\theta_k\leq T$. Note that $\mathcal{A}^k$ and $\mathcal{T}^k$ may depend on each other in general. We denote the set of admissible trading strategies by $\mathcal{A}_\mathbb{G}\times\mathcal{T}_\mathbb{G}$. 

The following lemma will be used for the measurable selection issue later on.
\begin{lemma}\label{lemma 4-1}
For $k=0,\dotso,n$, define the metric on $\mathcal{T}^k$ in the following way: 
$$\rho(\tau_1^k,\tau_2^k):=\mathbb{E}\Big[\int_0^{\infty}e^{-t}\big|1_{\{\tau_1^k\leq t\}}-1_{\{\tau_2^k\leq t\}}\big|dt\Big], \ \ \ \ \ \tau_1^k,\tau_2^k\in\mathcal{T}^k.$$
Then $\mathcal{T}^k$ is a separable metric space.
\end{lemma}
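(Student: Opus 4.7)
My plan splits into two parts: first to verify that $\rho$ is a metric, and second to establish separability via an isometric embedding into an $L^1$ space.

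Non-negativity and symmetry of $\rho$ are immediate from the definition. The triangle inequality follows by applying $|a-c|\le |a-b|+|b-c|$ pointwise inside the integral with $a,b,c$ chosen to be the indicators of $\{\tau_1^k\le t\}$, $\{\tau_2^k\le t\}$, $\{\tau_3^k\le t\}$, and then taking $\mathbb{E}\int_0^\infty e^{-t}(\cdot)\,dt$. For separation of points, if $\rho(\tau_1^k,\tau_2^k)=0$, then by Fubini, for $\mathbb{P}$-a.e.\ $\omega$ we have $1_{\{\tau_1^k(\omega)\le t\}}=1_{\{\tau_2^k(\omega)\le t\}}$ for Lebesgue-a.e.\ $t\ge 0$; since $t\mapsto 1_{\{\tau\le t\}}$ is right continuous, equality a.e.\ upgrades to equality for every $t$, and $\tau_i^k(\omega)=\inf\{t: 1_{\{\tau_i^k(\omega)\le t\}}=1\}$ then gives $\tau_1^k=\tau_2^k$ a.s.

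For separability, I would introduce the map $\Phi:\mathcal{T}^k\to L^1\bigl(\Omega\times[0,\infty),\,\mathbb{P}\otimes\mu\bigr)$ with $\mu(dt)=e^{-t}dt$, defined by $\Phi(\tau)(\omega,t):=1_{\{\tau(\omega)\le t\}}$. By construction $\rho(\tau_1^k,\tau_2^k)=\|\Phi(\tau_1^k)-\Phi(\tau_2^k)\|_{L^1}$, so $\Phi$ is an isometry onto its image. Since every subspace of a separable metric space is separable, it is enough to verify that the target $L^1$ space is separable. This in turn reduces to showing that the relevant product $\sigma$-algebra is countably generated modulo null sets: $\mathcal{F}_\infty$ is generated by the Brownian motion viewed as a random variable in the Polish space $C([0,\infty);\mathbb{R}^d)$ and hence is countably generated up to $\mathbb{P}$-null sets; $\mathcal{D}^k_\infty$ is generated by the finitely many random variables $\zeta_1,\dots,\zeta_k$ and the $E$-valued marks $\ell_1,\dots,\ell_k$; and the Borel $\sigma$-algebra on $[0,\infty)$ is countably generated. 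Thus $\mathcal{G}^k_\infty\otimes\mathcal{B}([0,\infty))$ is countably generated modulo $\mathbb{P}\otimes\mu$-null sets, so $L^1(\mathbb{P}\otimes\mu)$ is separable and $\mathcal{T}^k$ inherits separability via $\Phi$.

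The verification that $\rho$ is a metric is routine; the conceptually important move is recognizing that $\rho$ is precisely the $L^1$-distance between indicator processes $t\mapsto 1_{\{\tau^k\le t\}}$. The only technical point needing some care is the countable generation of $\mathcal{G}^k_\infty$ modulo null sets, which is standard given the Polish structure behind the Brownian filtration and the finitely many jump indices, so I do not expect a substantial obstacle here.
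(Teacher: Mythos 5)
Your proof is correct and follows essentially the same route as the paper: the paper also identifies $\rho$ as the $L^1([0,\infty)\times\Omega)$-distance between the processes $e^{-t}1_{\{\tau^k\le t\}}$ and invokes separability of $L^1$ (citing a reference) to conclude. You simply supply the details the paper leaves implicit — the metric axioms and the fact that $\mathcal{G}^k_\infty\otimes\mathcal{B}([0,\infty))$ is countably generated modulo null sets, which is what makes the target $L^1$ space separable.
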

\begin{proof}
Since for any $\mathbb{G}^k$-stopping time $\tau^k$, $e^{-t}1_{\{\tau^k\leq t\}}$ is a $\mathbb{G}^k$-adapted process in $L^1([0,\infty)\times\Omega)$, the conclusion follows from the separability of $L^1$, see \cite{Touzi1}.
\end{proof}
Following \cite{pham1}, we describe  the formulation of a stopped controlled state process as follows:
\begin{itemize}
\item Controlled state process between jumps:
$$(x,\pi^k)\in\mathbb{R}^d\times\mathcal{A}^k\longmapsto X^{k,x,\pi^k}\in\mathcal{O}_{\mathbb{F}}(\Delta_k,E^k),\ \ k=0,\dotso,n,$$
such that
$$X_0^{0,x,\pi^0}=x,\ \ \ \ \ X_{\theta_k}^{k,\beta,\pi^k}(\pmb\theta_k,\pmb e_k)=\beta,\ \ \forall \beta\ \mathcal{F}_{\theta_k}\text{-measurable}.$$
\item Jumps of controlled state process: we have a collection of maps $\Gamma^k$ on $\mathbb{R}_+\times\Omega\times\mathbb{R}^d\times A^{k-1}\times E$, for $k=1,\dotso,n$, such that
$$(t,\omega,x,a,e)\mapsto\Gamma^k(\omega,x,a,e)\ \ \ \text{is}\ \mathcal{P}_\mathbb{F}\otimes\mathcal{B}(\mathbb{R}^d)\otimes\mathcal{B}(A^{k-1})\otimes\mathcal{B}(E)\text{-measurable}$$
\item Global controlled state process: 
$$\big(x, \pi\sim(\pi^0,\dotso,\pi^n)\big)\in\mathbb{R}^d\times\mathcal{A}_{\mathbb{G}}\longmapsto X^{x,\pi}\in\mathcal{O}_{\mathbb{G}},$$
where
\begin{equation}\label{4-1} X_t^{x,\pi}=\bar X_t^0 1_{\{t<\zeta_1\}}+\sum_{k=1}^{n-1} \bar X_t^k(\pmb\zeta_k,\pmb\ell_k) 1_{\{\zeta_k\leq t<\zeta_{k+1}\}}+\bar X_t^n(\pmb\zeta_n,\pmb\ell_n) 1_{\{\zeta_n\leq t\}},\end{equation}
with $(\bar X^0,\dotso,\bar X^n)\in\mathcal{O}_{\mathbb{F}}(\Delta_0,E^0)\times\dotso\times\mathcal{O}_{\mathbb{F}}(\Delta_n,E^n)$ with initial data
\begin{eqnarray}
&&\bar X^0=X^{0,x,\pi^0},\notag\\
&&\bar X^k(\pmb\theta_k,\pmb e_k)=X^{k,\Gamma_{\theta_k}^k(\bar X_{\theta_k}^{k-1},\pi_{\theta_k}^{k-1},e_k),\pi^k}(\pmb\theta_k,\pmb e_k),\ \ k=1,\dotso,n.\notag
\end{eqnarray}
\item Stopped global controlled state process: given a trading strategy $(\pi,\tau)\sim(\pi^k,\tau^k)_{k=0}^n$ in $\mathcal{A}_{\mathbb{G}}\times\mathcal{T}_{\mathbb{G}}$, let $X^{x,\pi}$ be the process from \eqref{4-1}, then the stopped controlled state process is:
\begin{equation}\label{4-2} X_\tau^{x,\pi}=\bar X_{\tau^0}^0 1_{\{\tau<\zeta_1\}}+\sum_{k=1}^{n-1} \bar X_{\tau^k}^k(\pmb\zeta_k,\pmb\ell_k) 1_{\{\zeta_k\leq \tau<\zeta_{k+1}\}}+\bar X_{\tau^n}^n(\pmb\zeta_n,\pmb\ell_n) 1_{\{\zeta_n\leq \tau\}}.\end{equation}
\end{itemize}

Assume $U\sim(U^0,\dotso,U^n)$ is bounded (nonnegative, nonpositive), $\mathcal{O}_\mathbb{G}\otimes\mathcal{B}(\mathbb{R}^d)$-measurable which gives the terminal payoff $U_t$ at time $t$ . Consider the two types of the controller-stopper problems:
\begin{equation}\label{4-3}V^0(x)=\sup_{\tau\in\mathcal{T}_{\mathbb{G}}}\sup_{\pi\in\mathcal{A}_{\mathbb{G}}}\mathbb{E}\big[U_\tau(X_\tau^{x,\pi})\big],\ \ x\in\mathbb{R}^d,\end{equation}
\begin{equation}\label{4-4}\mathfrak{V}^0(x)=\sup_{\pi\in\mathcal{A}_{\mathbb{G}}}\inf_{\tau\in\mathcal{T}_{\mathbb{G}}}\mathbb{E}\big[U_\tau(X_\tau^{x,\pi})\big],\ \ x\in\mathbb{R}^d.\end{equation}
We require that for any $x\in\mathbb{R}^d$ and admissible control $\pi$, the map $t\rightarrow U_t(X_t^{x,\pi})$ is right continuous. 

The following theorem provides a decomposition for calculating $V^0$ in \eqref{4-3}. Its proof is similar to the proof of Theorem  4.1 in \cite{pham1}.     
\begin{theorem}\label{theorem 4-2}
Define value functions $(\bar V^k)_{k=0}^n$ as
\begin{eqnarray}
\label{4-6} && \bar V^n(x,\pmb\theta_n,\pmb e_n)=\esssup_{\tau^n\in\mathcal{T}^n}\esssup_{\pi^n\in\mathcal{A}^n}\mathbb{E}\Big[U_{\tau^n}^n(X_{\tau^n}^{n,x,\pi^n},\pmb\theta_n,\pmb e_n)\cdot \alpha_{\tau^n}^n(\pmb\theta_n,\pmb e_n)\big|\mathcal{F}_{\theta_n}\Big],\ (\pmb\theta_n,\pmb e_n)\in\Delta_n(T)\times E^n,\hspace{-0.3cm}\notag\\
\label{4-7} &&\bar V^k(x,\pmb\theta_k,\pmb e_k)=\esssup_{\tau^k\in\mathcal{T}^k}\esssup_{\pi^k\in\mathcal{A}^k}\mathbb{E}\Big[U_{\tau^k}^k(X_{\tau^k}^{k,x,\pi^k},\pmb\theta_k,\pmb e_k)\cdot \alpha_{\tau^k}^k(\pmb\theta_k,\pmb e_k)\\
&&\hspace{1cm}+\int_{\theta_k}^{\tau^k}\int_E \bar V^{k+1}\Big(\Gamma_{\theta_k}^{k+1}(X_{\theta_{k+1}}^{k,x,\pi^k},\pi_{\theta_{k+1}}^k,e_{k+1}),\pmb\theta_k,\theta_{k+1},\pmb e_k,e_{k+1}\Big)\eta(de_{k+1})d\theta_{k+1}\big|\mathcal{F}_{\theta_k}\Big],\notag
\end{eqnarray}
$(\pmb\theta_k,\pmb e_k)\in\Delta_k(T)\times E^k$, for $k=0,\dotso,n-1$. Then $V^0(x)=\bar V^0(x)$.
\end{theorem}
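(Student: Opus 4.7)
The plan is to establish two inequalities separately. For the upper bound $V^0(x) \leq \bar V^0(x)$, I would fix an arbitrary admissible trading strategy $(\pi,\tau)\sim(\pi^k,\tau^k)_{k=0}^n$ and use the decomposition \eqref{4-2} of the stopped state process together with the assumed right-continuity of $t\mapsto U_t(X_t^{x,\pi})$ to write the $\mathbb{G}$-optional process $Z_t := U_t(X_t^{x,\pi})$ in the form $Z\sim(Z^0,\dotso,Z^n)$ where $Z^k_t = U^k_t(\bar X^k_t,\pmb\theta_k,\pmb e_k)$. By Theorem~\ref{theorem 4-3}, $\mathbb{E}[U_\tau(X_\tau^{x,\pi})] = J_0$, where $J_0,\dotso,J_n$ are given by \eqref{3-3}--\eqref{3-4}. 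A backward induction from $k=n$ down to $k=0$ then yields $J_k(\pmb\theta_k,\pmb e_k) \leq \bar V^k(\bar X^k_{\theta_k},\pmb\theta_k,\pmb e_k)$ almost surely: at the base step this follows directly from the definition of $\bar V^n$, and at the inductive step one uses the induction hypothesis to bound the integrand under the inner integral in \eqref{3-4} by $\bar V^{k+1}$ and then recognizes the resulting expression as lying below the essential supremum defining $\bar V^k$. Taking $\mathbb{E}[J_0] \leq \bar V^0(x)$ and then supremizing over $(\pi,\tau) \in \mathcal{A}_\mathbb{G}\times\mathcal{T}_\mathbb{G}$ gives the desired inequality.

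For the reverse inequality $V^0(x) \geq \bar V^0(x)$, the plan is to paste together $\varepsilon$-optimal selectors at each level of the backward induction. Fix $\varepsilon>0$. At level $k$, for each realization $(\pmb\theta_k,\pmb e_k)$ and each state $x$, the essential supremum in the definition of $\bar V^k$ is achieved up to $\varepsilon$ by some pair $(\pi^k_\ast,\tau^k_\ast)\in\mathcal{A}^k\times\mathcal{T}^k$. Using the separability of $\mathcal{A}^k$ and the separability of $\mathcal{T}^k$ established in Lemma~\ref{lemma 4-1}, together with a classical measurable selection theorem, I can choose these selectors to be jointly measurable in $(x,\pmb\theta_k,\pmb e_k)$, producing admissible $\hat\pi^k\in\mathcal{A}^k$ and $\hat\tau^k\in\mathcal{T}^k$ (after substituting the state process forward through the jump maps $\Gamma^k$). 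Stitching the collection $(\hat\pi^k,\hat\tau^k)_{k=0}^n$ into a global strategy $(\hat\pi,\hat\tau)$ via the decompositions of Theorem~\ref{theorem 3-1} and Lemma~\ref{lemma 2-1}, and then running the backward identity of Theorem~\ref{theorem 4-3} with this particular strategy, produces $\mathbb{E}[U_{\hat\tau}(X_{\hat\tau}^{x,\hat\pi})] \geq \bar V^0(x) - C\varepsilon$, where $C$ absorbs the finitely many $\varepsilon$-losses across the $n$ levels (each controlled by \eqref{u.i.} and the boundedness assumption on $U$). Letting $\varepsilon \downarrow 0$ completes the argument.

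The main obstacle will be the measurable selection step in the lower bound: ensuring that the $\varepsilon$-optimal $(\hat\pi^k,\hat\tau^k)$ depend measurably on $(\pmb\theta_k,\pmb e_k)$ so that they indeed belong to $\mathcal{P}_\mathbb{F}(\Delta_k,E^k;A^k)$ and to the set of indexed $\mathbb{G}^k$-stopping times, and that they are compatible across levels with the jump maps $\Gamma^{k+1}$. The separability of $\mathcal{T}^k$ provided by Lemma~\ref{lemma 4-1} is precisely what makes the classical measurable selection machinery applicable here, exactly as in the analogous step of Theorem 4.1 of \cite{pham1}. The rest of the argument (the upper bound and the propagation of the identity $\mathbb{E}[Z_\tau]=J_0$) is essentially a direct application of Theorem~\ref{theorem 4-3} to the process $U_t(X_t^{x,\pi})$.
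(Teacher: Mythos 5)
Your proposal is correct and follows essentially the same route as the paper: the inequality $V^0\leq\bar V^0$ by applying Theorem~\ref{theorem 4-3} to $U_t(X_t^{x,\pi})$ and running a backward induction on the recursion \eqref{3-3}--\eqref{3-4}, and the reverse inequality by pasting $\varepsilon$-optimal strategies chosen through a measurable selection argument based on the separability in Lemma~\ref{lemma 4-1} (the paper organizes this as a level-by-level induction on intermediate value functions $V^k=\bar V^k$, but the content is the same). The one point to tighten is that, since $T$ may be infinite, the $\varepsilon$-optimality at level $k+1$ must be taken in the weighted form $\varepsilon e^{-\theta_{k+1}}$ (as the paper does) so that the loss stays bounded after integration in $d\theta_{k+1}$; your constant $C$ is not controlled by \eqref{u.i.} or the boundedness of $U$ alone.
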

\begin{remark}
In Equation \eqref{4-7}, the first term $U_{\tau^k}^k(X_{\tau^k}^{k,x,\pi^k},\pmb\theta_k,\pmb e_k)\cdot\alpha_{\tau^k}^k(\pmb\theta_k,\pmb e_k)$  can be interpreted as the gain when there are no jumps between $\theta_k$ and $\tau^k$, which is measured by the survival density $\alpha_{\tau^k}^k$. The second term 
$$\int_{\theta_k}^{\tau^k}\int_E \bar V^{k+1}\Big(\Gamma_{\theta_k}^{k+1}(X_{\theta_{k+1}}^{k,x,\pi^k},\pi_{\theta_{k+1}}^k,e_{k+1}),\pmb\theta_k,\theta_{k+1},\pmb e_k,e_{k+1}\Big)\eta(de_{k+1})d\theta_{k+1}$$
can be understood as the gain when there is a jump at time $\theta_{k+1}$ between $\theta_k$ and $\tau^k$.
\end{remark}
\begin{proof}[Proof of \thref{theorem 4-2}]
For $x\in\mathbb{R}^d$, $(\pi,\tau)\sim(\pi^k,\tau^k)_{k=0}^n$ in $\mathcal{A}_{\mathbb{G}}\times\mathcal{T}_{\mathbb{G}}$, define
\begin{eqnarray}
\label{4-10} && I^n(x,\pmb\theta_n,\pmb e_n,\pi,\tau)=\mathbb{E}\Big[U_{\tau^n}^n(X_{\tau^n}^{n,x,\pi^n},\pmb\theta_n,\pmb e_n)\cdot \alpha_{\tau^n}^n(\pmb\theta_n,\pmb e_n)\big|\mathcal{F}_{\theta_n}\Big],\ \ \ (\pmb\theta_n,\pmb e_n)\in\Delta_n(T)\times E^n,\notag\\
\label{4-11} && I^k(x,\pmb\theta_k,\pmb e_k,\pi,\tau)=\mathbb{E}\bigg[U_{\tau^k}^k(X_{\tau^k}^{k,x,\pi^k},\pmb\theta_k,\pmb e_k)\cdot\alpha_{\tau^k}^k(\pmb\theta_k,\pmb e_k)\notag\\
&&\hspace{0.5cm}+\int_{\theta_k}^{\tau^k}\int_E I^{k+1}\Big(\Gamma_{\theta_{k+1}}^{k+1}(X_{\theta_{k+1}}^{k,x,\pi^k},\pi_{\theta_{k+1}}^k,e_{k+1}),\pmb\theta_k,\theta_{k+1},\pmb e_k,e_{k+1},\pi,\tau\Big)\eta(de_{k+1})d\theta_{k+1}\big|\mathcal{F}_{\theta_k}\bigg],\notag
\end{eqnarray}
$(\pmb\theta_k,\pmb e_k)\in\Delta_k(T)\times E^k$, for $k=0,\dotso,n-1$. Set $\displaystyle\bar I^k(\pmb\theta_k,\pmb e_k)=I^k(\bar X_{\theta_k}^k,\pmb\theta_k,\pmb e_k,\pi,\tau),\ \ k=0,\dotso,n$. From the decomposition \eqref{4-2}, we know that $(\bar I^k)_{k=0}^n$ satisfy the backward induction formula:
\begin{eqnarray}
&& \bar I^n(\pmb\theta_n,\pmb e_n)=\mathbb{E}\Big[U_{\tau^n}^n(\bar X_{\tau^n}^n,\pmb\theta_n,\pmb e_n)\cdot\alpha_{\tau^n}^n(\pmb\theta_n,\pmb e_n)\big|\mathcal{F}_{\theta_n}\Big],\notag\\
&& \bar I^k(\pmb\theta_k,\pmb e_k)=\mathbb{E}\bigg[U_{\tau_k}^k(\bar X_{\tau^k}^k,\pmb\theta_k,\pmb e_k)\cdot\alpha_{\tau^k}^k(\pmb\theta_k,\pmb e_k)+\int_{\theta_k}^{\tau^k}\int_E \bar I^{k+1}(\pmb\theta_k,\theta_{k+1},\pmb e_k,e_{k+1})\ \eta(de_{k+1})d\theta_{k+1}\big|\mathcal{F}_{\theta_k}\bigg].\notag
\end{eqnarray}
From \thref{theorem 4-3} we have that
\begin{equation}\label{4-12}\bar I^0=I^0=\mathbb{E}\big[U_\tau(X_\tau^{x,\pi})\big].\end{equation}
Define the value function processes
\begin{equation}\label{4-13} V^k(x,\pmb\theta_k,\pmb e_k):=\esssup_{\tau\in\mathcal{A}_{\mathbb{G}}}\esssup_{\pi\in\mathcal{T}_{\mathbb{G}}}\ I^k(x,\pmb\theta_k,\pmb e_k,\pi,\tau),\end{equation}
for $k=0,\dotso,n,\ x\in\mathbb{R}^d,\ (\pmb\theta_k,\pmb e_k)\in\Delta_k(T)\times E^k$. Observe that $V^0$ defined in \eqref{4-13} is consistent with its definition in \eqref{4-3} from \eqref{4-12}. Then it remains to show that $\bar V^k=V^k$ for $k=0,\dotso,n$. For $k=n$, since $I^n(x,\pmb\theta_n,\pmb e_n,\pi,\tau)$ in fact only depends on $(\pi^n,\tau^n)$, we immediately have $\bar V^n=V^n$. Now assume $\bar V^{k+1}=V^{k+1}$, for $0\leq k\leq n-1$. Then for any $(\pi,\tau)\sim(\pi^k,\tau^k)_{k=0}^n$ in $\mathcal{A}_{\mathbb{G}}\times\mathcal{T}_{\mathbb{G}}$,
\begin{eqnarray}
I^k(x,\pmb\theta_k,\pmb e_k,\pi,\tau)&\leq&\mathbb{E}\bigg[U_{\tau^k}^k(X_{\tau^k}^{k,x,\pi^k},\pmb\theta_k,\pmb e_k)\cdot\alpha_{\tau^k}^k(\pmb\theta_k,\pmb e_k)\notag\\
&+&\int_{\theta_k}^{\tau^k}\int_E V^{k+1}\Big(\Gamma_{\theta_{k+1}}^{k+1}(X_{\theta_{k+1}}^{k,x,\pi^k},\pi_{\theta_{k+1}}^k, e_{k+1}),\pmb\theta_k,\theta_{k+1},\pmb e_k,e_{k+1}\Big)\ \eta(de_{k+1})d\theta_{k+1}\Big|\mathcal{F}_{\theta_k}\bigg]\notag\\
&\leq&\bar V^k(x,\pmb\theta_k,\pmb e_k),\notag
\end{eqnarray}
which implies that $V^k\leq\bar V^k$. 

Conversely, given $x\in\mathbb{R}^d$ and $(\pmb\theta_k,\pmb e_k)\in\Delta_k(T)\times E^k$, let us prove $V^k(x,\pmb\theta_k.\pmb e_k)\geq\bar V^k(x,\pmb\theta_k,\pmb e_k)$. Fix $(\pi^k,\tau^k)\in\mathcal{A}^k\times\mathcal{T}^k$ and the associated controlled process $X^{k,x,\pi^k}$, from the definition of $V^{k+1}$, we have that for any $\omega\in\Omega$ and $\epsilon>0$, there exists $(\pi^{\omega,\epsilon},\tau^{\omega,\epsilon})\in\mathcal{A}_{\mathbb{G}}\times\mathcal{T}_{\mathbb{G}}$, such that it is an $\epsilon e^{-\theta_{k+1}}$-optimal trading strategy for $V^{k+1}(\cdot,\pmb\theta_k,\pmb e_k)$ at $\big(\omega,\Gamma_{\theta_{k+1}}^{k+1} (X_{\theta_{k+1}}^{k,x,\pi^k},\pi_{\theta_{k+1}}^k, e_{k+1})\big)$. By the separability of the set of admissible trading strategies from \leref{lemma 4-1}, one can use a measurable selection argument (e.g., see \cite{Wagner1}) to find $(\pi^{\epsilon}, \tau^{\epsilon})\sim(\pi^{\epsilon,k},\tau^{\epsilon,k})_{k=0}^n$ in $\mathcal{A}_{\mathbb{G}}\times\mathcal{T}_{\mathbb{G}}$, such that $\pi_t^{\epsilon}(\omega)=\pi_t^{\omega,\epsilon}(\omega),\ dt\otimes d\mathbb{P}$-a.e. and $\tau^{\epsilon}(\omega)=\tau^{\omega,\epsilon}(\omega)$, a.s., and thus
\begin{eqnarray}
&&V^{k+1}\big(\Gamma_{\theta_{k+1}}^{k+1}(X_{\theta_{k+1}}^{k,x,\pi},\pi_{\theta_{k+1}}^k,e_{k+1}),\pmb\theta_k,\theta_{k+1},\pmb e_k,e_{k+1}\big)-\epsilon e^{-\theta_{k+1}}\notag\\
&&\hspace{4cm}\leq I^{k+1}\big(\Gamma_{\theta_{k+1}}^{k+1}(X_{\theta_{k+1}}^{k,x,\pi},\pi_{\theta_{k+1}}^k,e_{k+1}),\pmb\theta_k,\theta_{k+1},\pmb e_k,e_{k+1},\pi^{\epsilon},\tau^{\epsilon}\big),\ \ \ \text{a.s.}\notag
\end{eqnarray}
Consider the admissible trading strategy $(\tilde\pi^{\epsilon},\tilde\tau^{\epsilon})$ with the decomposition
$$\tilde\pi^{\epsilon}\sim(\pi^{\epsilon,0},\dotso,\pi^{\epsilon,k-1},\pi^k,\pi^{\epsilon,k+1},\dotso,\pi^{\epsilon,n})\ \ \text{and}\ \  \tilde\tau^{\epsilon}\sim(\tau^{\epsilon,0},\dotso,\tau^{\epsilon,k-1},\tau^k,\tau^{\epsilon,k+1},\dotso,\tau^{\epsilon,n}).$$
Since $I^{k+1}(x,\pmb\theta_{k+1},\pmb e_{k+1},\pi,\tau)$ depends on $(\pi,\tau)\sim(\pi^j,\tau^j)_{j=0}^n$ only through their last components $(\pi^j,\tau^j)_{j=k+1}^n$, we have
\begin{eqnarray}
V^k(x,\pmb\theta_k,\pmb e_k)&\geq& I^k(x,\pmb\theta_k,\pmb e_k,\tilde\pi^{\epsilon},\tilde\tau^{\epsilon})\notag\\
&=&\mathbb{E}\bigg[U_{\tau^k}^k(X_{\tau^k}^{k,x,\pi^k},\pmb\theta_k,\pmb e_k)\cdot\alpha_{\tau^k}^k(\pmb\theta_k,\pmb e_k)\notag\\
&+&\int_{\theta_k}^{\tau^k}\int_E I^{k+1}\Big(\Gamma_{\theta_{k+1}}^{k+1}(X_{\theta_{k+1}}^{k,x,\pi^k},\pi_{\theta_{k+1}}^k,e_{k+1}),\pmb\theta_{k+1},\pmb e_{k+1},\tilde\pi^{\epsilon},\tilde\tau^{\epsilon}\Big)\eta(de_{k+1})d\theta_{k+1}\big|\mathcal{F}_{\theta_k}\bigg]\notag\\
&\geq& \mathbb{E}\bigg[U_{\tau^k}^k(X_{\tau^k}^{k,x,\pi^k},\pmb\theta_k,\pmb e_k)\cdot\alpha_{\tau^k}^k(\pmb\theta_k,\pmb e_k)\notag\\
&+&\int_{\theta_k}^{\tau^k}\int_E \bar V^{k+1}\Big(\Gamma_{\theta_{k+1}}^{k+1}(X_{\theta_{k+1}}^{k,x,\pi^k},\pi_{\theta_{k+1}}^k,e_{k+1}),\pmb\theta_{k+1},\pmb e_{k+1}\Big)\eta(de_{k+1})d\theta_{k+1}\big|\mathcal{F}_{\theta_k}\bigg]-\epsilon,\notag
\end{eqnarray}
Therefore, $V^k\geq\bar V^k$, from which the claim of the theorem follows.
\end{proof}

Now let us consider the value function $\mathfrak{V}_0$ in \eqref{4-4}. We have the following result:
\begin{proposition}\label{prop 4-3}
Define value functions $(\bar{\mathfrak{V}}^k)_{k=0}^n$ as
\begin{eqnarray}
\label{4-14} &&\bar{\mathfrak{V}}^n(x,\pmb\theta_n,\pmb e_n)=\esssup_{\pi^n\in\mathcal{A}^n}\essinf_{\tau^n\in\mathcal{T}^n}\mathbb{E}\Big[U_{\tau^n}^n(X_{\tau^n}^{n,x,\pi^n},\pmb\theta_n,\pmb e_n)\cdot \alpha_{\tau^n}^n(\pmb\theta_n,\pmb e_n)\big|\mathcal{F}_{\theta_n}\Big],\ (\pmb\theta_n,\pmb e_n)\in \Delta_n(T)\times E^n,\hspace{-0.2cm}\notag\\
\label{4-15}\ \ \  &&\bar{\mathfrak{V}}^k(x,\pmb\theta_k,\pmb e_k)=\esssup_{\pi^k\in\mathcal{A}^k}\essinf_{\tau^k\in\mathcal{T}^k}\mathbb{E}\Big[U_{\tau^k}^k(X_{\tau^k}^{k,x,\pi^k},\pmb\theta_k,\pmb e_k)\cdot \alpha_{\tau^k}^k(\pmb\theta_k,\pmb e_k)\notag\\
&&\hspace{1cm}+\int_{\theta_k}^{\tau^k}\int_E \bar{\mathfrak{V}}^{k+1}\Big(\Gamma_{\theta_k}^{k+1}(X_{\theta_{k+1}}^{k,x,\pi^k},\pi_{\theta_{k+1}}^k,e_{k+1}),\pmb\theta_k,\theta_{k+1},\pmb e_k,e_{k+1}\Big)\eta(de_{k+1})d\theta_{k+1}\big|\mathcal{F}_{\theta_k}\Big],\notag
\end{eqnarray}
$(\pmb\theta_k,\pmb e_k)\in\Delta_k(T)\times E^k$, for $k=0,\dotso,n-1$. Then $\mathfrak{V}_0(x)=\bar{\mathfrak{V}}^0(x)$.
\end{proposition}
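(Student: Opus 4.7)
My plan is to mirror the proof of Theorem~\ref{theorem 4-2} as closely as possible, replacing $\esssup_{\tau^k}$ by $\essinf_{\tau^k}$ at every level of the backward induction. As in that proof, I would begin by defining the auxiliary quantities $I^k(x,\pmb\theta_k,\pmb e_k,\pi,\tau)$ for $(\pi,\tau)\in\mathcal{A}_{\mathbb{G}}\times\mathcal{T}_{\mathbb{G}}$ by the same backward recursion used there: $I^n$ is the conditional expectation of $U^n_{\tau^n}(X^{n,x,\pi^n}_{\tau^n})\alpha^n_{\tau^n}(\pmb\theta_n,\pmb e_n)$ given $\mathcal{F}_{\theta_n}$, and $I^k$ adds the integral of $I^{k+1}$ evaluated at the post-jump state $\Gamma^{k+1}_{\theta_{k+1}}(X^{k,x,\pi^k}_{\theta_{k+1}},\pi^k_{\theta_{k+1}},e_{k+1})$. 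Applying \thref{theorem 4-3} to $Z_t=U_t(X^{x,\pi}_t)$ yields $I^0(x,\pi,\tau)=\mathbb{E}[U_\tau(X^{x,\pi}_\tau)]$, so if we set
\[
\mathfrak{V}^k(x,\pmb\theta_k,\pmb e_k):=\esssup_{\pi\in\mathcal{A}_{\mathbb{G}}}\essinf_{\tau\in\mathcal{T}_{\mathbb{G}}} I^k(x,\pmb\theta_k,\pmb e_k,\pi,\tau),
\]
the definition at $k=0$ coincides with the $\mathfrak{V}^0$ in \eqref{4-4}. The task is then to show $\mathfrak{V}^k=\bar{\mathfrak{V}}^k$ by backward induction on $k$.

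The base case $k=n$ is immediate because $I^n$ depends on $(\pi,\tau)$ only through its last components $(\pi^n,\tau^n)$, so $\mathfrak{V}^n$ collapses to $\esssup_{\pi^n}\essinf_{\tau^n}$, which is $\bar{\mathfrak{V}}^n$. For the inductive step, assume $\mathfrak{V}^{k+1}=\bar{\mathfrak{V}}^{k+1}$ and establish the two inequalities separately. For $\mathfrak{V}^k\leq\bar{\mathfrak{V}}^k$, fix any $\pi\in\mathcal{A}_{\mathbb{G}}$ and any $\epsilon>0$; using the $\essinf$ in the definition of $\bar{\mathfrak{V}}^k$, select $\tau^{\epsilon,k}\in\mathcal{T}^k$ that is $\epsilon$-minimizing in the one-step problem for this $\pi^k$, and, on the jump event $\theta_{k+1}\in(\theta_k,\tau^{\epsilon,k}]$, use the inductive identity $\bar{\mathfrak{V}}^{k+1}=\mathfrak{V}^{k+1}=\esssup_\pi\essinf_\tau I^{k+1}$ together with measurable selection (enabled by \leref{lemma 4-1} and \cite{Wagner1}) to find $(\tau^{\epsilon,k+1},\dotso,\tau^{\epsilon,n})$ that is $\epsilon$-infimizing for $I^{k+1}(\cdot,\pi,\cdot)$ jointly measurably in $(\omega,\pmb\theta_{k+1},\pmb e_{k+1})$. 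Splicing these into the global rule $\tau^\epsilon\sim(\tau^0,\dotso,\tau^{k-1},\tau^{\epsilon,k},\tau^{\epsilon,k+1},\dotso,\tau^{\epsilon,n})$ and using the recursion for $I^k$ gives $I^k(x,\pmb\theta_k,\pmb e_k,\pi,\tau^\epsilon)\leq\bar{\mathfrak{V}}^k(x,\pmb\theta_k,\pmb e_k)+O(\epsilon)$, from which $\mathfrak{V}^k\leq\bar{\mathfrak{V}}^k$ follows by sending $\epsilon\downarrow 0$. The reverse inequality $\mathfrak{V}^k\geq\bar{\mathfrak{V}}^k$ is obtained by the symmetric construction: pick $\pi^{\epsilon,k}$ that is $\epsilon$-supremizing for $\bar{\mathfrak{V}}^k$, then use measurable selection to obtain $(\pi^{\epsilon,k+1},\dotso,\pi^{\epsilon,n})$ that is $\epsilon$-supremizing for $\mathfrak{V}^{k+1}$ uniformly in $(\omega,\pmb\theta_{k+1},\pmb e_{k+1})$, splice into a global control, and bound $\essinf_\tau I^k$ from below in the same fashion.

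The principal obstacle compared with the proof of \thref{theorem 4-2} is that here both inequalities require a measurable selection; in the sup-sup setting of Theorem~\ref{theorem 4-2}, the direction $V^k\leq\bar V^k$ was essentially trivial because $I^{k+1}\leq\bar V^{k+1}$ pointwise, whereas here for the analogous direction we must commute an infimum with the conditional expectation over the jump-time slice $(\theta_{k+1},e_{k+1})$, and this is exactly what a measurable $\epsilon$-selection of the tail stopping rules accomplishes. Fortunately, the separability of $\mathcal{T}^k$ established in \leref{lemma 4-1}, together with the assumed separability of $\mathcal{A}^k$, allows the measurable selection theorem of \cite{Wagner1} to be invoked on both sides of the min-max, so the selection argument used once in Theorem~\ref{theorem 4-2} can now be applied twice (once for each inequality) with essentially the same bookkeeping. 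Once both inequalities have been closed at step $k$, the induction concludes at $k=0$ with $\mathfrak{V}^0(x)=\bar{\mathfrak{V}}^0(x)$, which is the claim of the proposition.
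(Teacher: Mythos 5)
Your proposal is correct, but it is organized differently from the paper's proof. The paper first freezes the entire control $\pi$ and applies \thref{theorem 4-2} to the inner optimal stopping problem (equivalently, to $-U$ with the control set reduced to the single $\pi$): this internalizes the infimum over global stopping times level by level and produces a per-control family $\tilde{\mathfrak{V}}^k(\cdot,\pi)$, defined by the same recursion as $\bar{\mathfrak{V}}^k$ but with only $\essinf_{\tau^k}$ at each step and continuation value $\tilde{\mathfrak{V}}^{k+1}(\cdot,\pi)$, together with the identity $\tilde{\mathfrak{V}}^0(x,\pi)=\inf_{\tau\in\mathcal{T}_{\mathbb{G}}}\mathbb{E}\big[U_\tau(X^{x,\pi}_\tau)\big]$. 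It then sets $\mathfrak{V}^k:=\esssup_{\pi}\tilde{\mathfrak{V}}^k(\cdot,\pi)$ and commutes this supremum with the recursion exactly as in the proof of \thref{theorem 4-2}; since the infimum is already internalized, one inequality is a pure monotonicity statement in the continuation value, and only one measurable $\epsilon$-selection (of tail controls) is needed per induction step. You instead keep the recursive functional $I^k(\cdot,\pi,\tau)$ of the full strategy pair and run a single backward induction on $\esssup_{\pi}\essinf_{\tau}I^k$, which forces two $\epsilon$-selections per step (tail stopping rules for the upper bound, tail controls for the lower bound). Both routes rest on the same ingredients (\thref{theorem 4-3} to identify the value at $k=0$, and \leref{lemma 4-1} plus the measurable selection theorem of \cite{Wagner1}); the paper's factorization is shorter because it reuses \thref{theorem 4-2} wholesale, while yours is more symmetric and self-contained but duplicates the selection bookkeeping. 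One small technical point: since $T$ may be infinite, your \lq\lq $\epsilon$-infimizing\rq\rq\ and \lq\lq $\epsilon$-supremizing\rq\rq\ choices on the jump slice should, as in the paper's proof of \thref{theorem 4-2}, be taken $\epsilon e^{-\theta_{k+1}}$-optimal, so that the error integrates in $d\theta_{k+1}$ to a quantity of order $\epsilon$; with a plain $\epsilon$ the accumulated error over $[\theta_k,\tau^k]$ need not be small when $T=\infty$.
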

\begin{proof}
Given $\pi\sim(\pi^0,\dotso,\pi^n)$ in $\mathcal{A}_{\mathbb{G}}$, define
\begin{eqnarray}
&& \tilde{\mathfrak{V}}^n(x,\pmb\theta_n,\pmb e_n,\pi)=\essinf_{\tau^n\in\mathcal{T}^n}\mathbb{E}\Big[U_{\tau^n}^n(X_{\tau^n}^{n,x,\pi^n},\pmb\theta_n,\pmb e_n)\cdot \alpha_{\tau^n}^n(\pmb\theta_n,\pmb e_n)\big|\mathcal{F}_{\theta_n}\Big],\ (\pmb\theta_n,\pmb e_n)\in\Delta_n(T)\times E^n,\notag\\
&&\tilde{\mathfrak{V}}^k(x,\pmb\theta_k,\pmb e_k,\pi)=\essinf_{\tau^k\in\mathcal{T}^k}\mathbb{E}\Big[U_{\tau^k}^k(X_{\tau^k}^{k,x,\pi^k},\pmb\theta_k,\pmb e_k)\cdot \alpha_{\tau^k}^k(\pmb\theta_k,\pmb e_k)\notag\\
&&\hspace{1cm}+\int_{\theta_k}^{\tau^k}\int_E \tilde{\mathfrak{V}}^{k+1}\Big(\Gamma_{\theta_k}^{k+1}(X_{\theta_{k+1}}^{k,x,\pi^k},\pi_{\theta_{k+1}}^k,e_{k+1}),\pmb\theta_k,\theta_{k+1},\pmb e_k,e_{k+1},\pi\Big)\eta(de_{k+1})d\theta_{k+1}\big|\mathcal{F}_{\theta_k}\Big],\notag
\end{eqnarray}
$(\pmb\theta_k,\pmb e_k)\in\Delta_k(T)\times E^k$, for $k=0,\dotso,n-1$. From \thref{theorem 4-2}, we have
$$\tilde{\mathfrak{V}}^0(x,\pi)=\inf_{\tau\in\mathcal{T}_{\mathbb{G}}}\mathbb{E}\ U_{\tau}(X_{\tau}^{x,\pi}).$$
Define
 $$\mathfrak{V}^k(x,\pmb\theta_k,\pmb e_k):=\esssup_{\pi\in\mathcal{A}_{\mathbb{G}}}\tilde{\mathfrak{V}}^k(x,\pmb\theta_k,\pmb e_k,\pi),\ (\pmb\theta_k,\pmb e_k)\in\Delta_k(T)\times E^k,\  k=0,\dotso,n.$$
 Then the definition for $\mathfrak{V}^0$ above is consistent with \eqref{4-4}. Following the proof of \thref{theorem 4-2} we can  show $\mathfrak{V}^k=\bar{\mathfrak{V}}^k,\ k=0,\dotso,n$. 
\end{proof}

\section{Application to indifference pricing of American options}\label{sec:indifference-pricing}
In this section, we apply our decomposition method to indifference pricing of American options under multiple default risk. The main results are \thref{theorem 4-4} and \thref{theorem 4-6}, which provide the RBSDE characterization of the indifference prices.

\subsection{Market model}
The model we will use here is similar to that in \cite{pham2}. Let $T\in(0,\infty)$ be the finite time horizon. We assume in the market, there exists at most $n$ default events. Let $\zeta_1,\dotso,\zeta_n$ and $\ell_1,\dotso,\ell_n$ represent the random default times and marks respectively, with $\alpha$ defined in \eqref{3-6} as the probability density.
 For any time $t$, if $\zeta_k\leq t<\zeta_{k+1},\  k=1,\dotso,n-1$ ($t<\zeta_1$ for $k=0$ and $t\geq\zeta_n$ for $k=n$), we say the underlying processes are in the $k$-default scenario. 
 
 We consider a portfolio of $d$-asset with a value process defined by a $d$-dimensional $\mathbb{G}$-optional process $S\sim(S^0,\dotso,S^n)$ from \eqref{3-1}, where $S^k(\pmb\theta_k,\pmb e_k)\in\mathcal{O}_{\mathbb{F}}(\Delta_k,E^k)$ is valued in $\mathbb{R}_+^d$, representing the asset value in the $k$-default scenario, given the past default times $\pmb\zeta_k=\pmb\theta_k$ and the associated marks $\pmb\ell_k=\pmb e_k$, for $k=0,\dotso n$. Suppose the dynamics of the indexed process $S^k$ is given by
\begin{equation}\label{2-c} dS_t^k(\pmb\theta_k,\pmb e_k)=S_t^k(\pmb\theta_k,\pmb e_k)*\left(b_t^k(\pmb\theta_k,\pmb e_k)dt+\sigma_t^k(\pmb\theta_k,\pmb e_k)dW_t\right),\ \ \ t\geq\theta_k,\end{equation}
where $W$ is an $m$-dimensional  $(\mathbb{P},\mathbb{F})$-Brownian motion, $m\geq d$, $b^k$ and $\sigma^k$ are indexed processes in $\mathcal{P}_{\mathbb{F}}(\Delta_k,E^k)$, valued respectively in $\mathbb{R}^d$ and $\mathbb{R}^{d\times m}$. Here, for $x=(x_1,\dotso,x_d)'\in\mathbb{R}^d$, and $y=(y_1,\dotso,y_d)'\in\mathbb{R}^{d\times q}$, the expression $x*y$ denotes the vector $(x_1y_1,\dotso,x_dy_d)'\in\mathbb{R}^{d\times q}$. Equation \eqref{2-c}  can be viewed as an asset model with change of regimes after default events, with coefficient $b^k,\sigma^k$ depending on the past default information.  We make the usual no-arbitrage assumption that there exists an indexed risk premium process $\lambda^k\in\mathcal{P}_{\mathbb{F}}(\Delta_k,E^k)$, such that for all $(\pmb\theta_k,\pmb e_k)\in \Delta_k\times E^k$,
\begin{equation}\label{2-d}\sigma_t^k(\pmb\theta_k,\pmb e_k) \lambda_t^k(\pmb\theta_k,\pmb e_k)=b_t^k(\pmb\theta_k,\pmb e_k),\ \ \ t\geq 0.\end{equation}
Moreover, each default time $\theta_k$ may induce a jump in the asset portfolio, which will be formalized by considering a family of indexed processes $\gamma^k\in \mathcal{P}(\Delta_k,E^k,E)$, valued in $[-1,\infty)$, for $k=0,\dotso,n-1$.  For $(\pmb\theta_k,\pmb e_k)\in\Delta_k\times E^k$ and $e_{k+1}\in E,\ \gamma_{\theta_{k+1}}^k(\pmb\theta_k,\pmb e_k,e_{k+1})$ represents the relative vector jump size on the $d$ assets at time $t=\theta_{k+1}\geq \theta_k$ with a mark $e_{k+1}$, given the past default events $(\pmb\zeta_k,\pmb\ell_k)=(\pmb\theta_k,\pmb e_k)$. In other words, we have:
\begin{equation}\label{2-e} S_{\theta_{k+1}}^{k+1}(\pmb\theta_{k+1},\pmb e_{k+1})=S_{\theta_{k+1}^-}^k(\pmb\theta_k,\pmb e_k)*\left({\bf 1}_d+\gamma_{\theta_{k+1}}^k(\pmb\theta_k,\pmb e_k,e_{k+1})\right),\end{equation}
where ${\bf 1}_d$ is the vector in $\mathbb{R}^d$ with all components equal to $1$.
\begin{remark}
 It is possible that after default times, some assets may not be traded any more. Now suppose that after $k$ defaults, there are $\bar d$ assets still tradable, where $0\leq \bar d\leq d$. Then without loss of generality, we may assume $b^k(\pmb\theta_k,\pmb e_k)=\left(\bar b(\pmb\theta_k,\pmb e_k)\ 0\right),\ \sigma^k(\pmb\theta_k,\pmb e_k)=\left(\bar\sigma^k(\pmb\theta_k,\pmb e_k)\ 0\right),\ \gamma^k(\pmb\theta_k,\pmb e_k,e)=\left(\bar\gamma^k(\pmb\theta_k,\pmb e_k,e)\ 0\right)$, where $\bar b(\pmb\theta_k,\pmb e_k),\ \bar\sigma^k(\pmb\theta_k,\pmb e_k),\ \bar\gamma^k(\pmb\theta_k,\pmb e_k,e)$ are $\mathbb{F}$-predictable processes valued respectively in $\mathbb{R}^{\bar d},\mathbb{R}^{\bar d^k\times m}, \mathbb{R}^{\bar d}$. In this case, we shall also assume that the volatility matrix $\bar\sigma^k(\pmb\theta_k,\pmb e_k)$ is of full rank. we can then define the risk premium
$$\lambda^k(\pmb\theta_k,\pmb e_k)=\bar\sigma^k(\pmb\theta_k,\pmb e_k)'\left(\bar\sigma^k(\pmb\theta_k,\pmb e_k)\bar\sigma^k(\pmb\theta_k,\pmb e_k)'\right)^{-1}\bar b^k(\pmb\theta_k,\pmb e_k),$$
which satisfies \eqref{2-d}.
\end{remark}

An American option of maturity $T$ is modeled by a $\mathbb{G}$-optional process $R\sim(R^0,\dotso,R^n)$ from \eqref{2-2}, where $R_t^k(\pmb\theta_k,\pmb e_k)$ is continuous with respect to $t$, and represents the payoff if the option is exercised at time $t\in[\theta_k,T]$ in the $k$-default scenario, given the past default events $(\pmb\zeta_k,\pmb\ell_k)=(\pmb\theta_k,\pmb e_k)$, for $k=0,\dotso,n$.

A control in the $d$-asset portfolio is a $\mathbb{G}$-predictable process $\pi\sim(\pi^0,\dotso,\pi^n)$, where $\pi^k(\pmb\theta_k,\pmb e_k)\in\mathcal{P}_{\mathbb{F}}(\Delta_k,E^k)$ is valued in a closed set $A^k$ of $\mathbb{R}^d$ containing the zero element, and represents the amount invested continuously in the $d$ assets in the $k$-default scenario, given the past default information $(\pmb\zeta_k,\pmb\ell_k)=(\pmb\theta_k,\pmb e_k)$. An exercise time is a $\mathbb{G}$-stopping time $\tau\sim(\tau^0,\dotso,\tau^n)$ satisfying $\tau\leq T$,  with the decomposition from \prref{prop 2-6}. A trading strategy is a pair of a control and an exercise time. 

For a trading strategy $(\pi,\tau)\sim(\pi^k,\tau^k)_{k=0}^n$, we have the corresponding wealth process $\mathfrak{\mathfrak{X}}\sim(\mathfrak{X}^0,\dotso,\mathfrak{X}^n)$, where $\mathfrak{X}^k(\pmb\theta_k,\pmb e_k)\in\mathcal{O}_{\mathbb{F}}(\Delta_k,E^k)$, representing the wealth controlled by $\pi^k(\pmb\theta_k,\pmb e_k)$ in the price process $S^k(\pmb\theta_k,\pmb e_k)$, given the past default events $(\pmb\zeta_k,\pmb\ell_k)=(\pmb\theta_k,\pmb e_k)$. From \eqref{2-c} we have
\begin{equation}\label{3-e} d\mathfrak{X}_t^k(\pmb\theta_k,\pmb e_k)=\pi_t^k(\pmb\theta_k,\pmb e_k)'\left(b_t^k(\pmb\theta_k,\pmb e_k)dt+\sigma^k(\pmb\theta_k,\pmb e_k)dW_t\right),\ \ t\geq\theta_k.\notag\end{equation}
Moreover, each default time induces a jump in the asset price process, and then also on the wealth process. From \eqref{2-e}, we have
\begin{equation}\label{3-f} \mathfrak{X}_{\theta_{k+1}}^{k+1}(\pmb\theta_{k+1},\pmb e_{k+1})=\mathfrak{X}_{\theta_{k+1}^-}^k(\pmb\theta_k,\pmb e_k)+\pi_{\theta_{k+1}}^k(\pmb\theta_k,\pmb e_k)'\gamma_{\theta_{k+1}}^k(\pmb\theta_k,\pmb e_k,e_{k+1}).\end{equation}

\subsection{Indifference price}
Let $U$ be an exponential utility with risk aversion coefficient $p>0$:
$$U(x)=-\exp(-px),\ \ x\in\mathbb{R},$$
which describes an investor's preference. We will consider two cases. The first case is that the investor can trade the $d$-assets portfolio following control $\pi$, associated to a wealth process $\mathfrak{X}=\mathfrak{X}^{x,\pi}$ with initial capital $\mathfrak{X}_{0^-}=x$. Besides, she holds an American option and can choose to exercise it at any time $\tau$, $\tau\leq T$, to get payoff $R_{\tau}$. So the maximum utility she can get (or as close as she want, if not attainable) is: 
\begin{equation}\label{3-g}V^0(x)=\sup_{\tau}\sup_{\pi}\mathbb{E}\left[U(\mathfrak{X}_{\tau}^{x,\pi}+R_{\tau})\right].\end{equation}
We call $\bar c$ the indifference buying price of the American option, if
$$U(x)=V^0(x-\bar c).$$

The second case is that the investor trades the $d$-asset portfolio following control $\pi$, while shorting an American option. So she has to deliver the payoff $R_{\tau}$ at some exercise time $\tau$, which is chosen by the holder of the option. By considering the worst scenario, the maximum utility she can get (or as close as she want) is:
\begin{equation}\label{3-h}\mathfrak{V}^0(x)=\sup_{\pi}\inf_{\tau}\mathbb{E}\left[U(\mathfrak{X}_{\tau}^{x,\pi}-R_{\tau})\right].\end{equation}
In this case, we call $\underline c$ the indifference selling price of the American option, if
$$U(x)=\mathfrak{V}^0(x+\underline c).$$

\subsection{Indifference buying price}
 In this sub-section, we will focus on the problem \eqref{3-g}. \thref{theorem 4-4} is the main result for this sub-section. 
\begin{definition}\label{def 4-1}
(Admissible trading strategy) A trading strategy $(\pi,\tau)\sim(\pi^k,\tau^k)_{k=0}^n$ is admissible, if for any $(\pmb{\theta}_k,\pmb{e}_k)\in\Delta_k(T)\times E^k$, under the control $\pi^k$,
\begin{itemize}
\item[(a)] $\displaystyle \int_{\theta_k}^{\tau^k}|\pi_t^k(\pmb{\theta}_k,\pmb{e}_k)'b_t^k(\pmb{\theta}_k,\pmb{e}_k)|dt+\int_{\theta_k}^{\tau^k}|\pi_t^k(\pmb{\theta}_k,\pmb{e}_k)'\sigma_t^k(\pmb{\theta}_k,\pmb{e}_k)|^2dt<\infty,  \ \ \text{a.s.,}\ \ k=0,\dotso,n,$
\item[(b)] the family $\displaystyle \Big\{ U(\mathfrak{X}_{\tau\wedge\tau^k}^k(\pmb{\theta}_k,\pmb{e}_k)):\ \ \tau \text{ is any }\mathbb{F}-\text{stopping time valued in }[\theta_k,T]\Big\}$ is uniformly integrable, i.e., $U(\mathfrak{X}^k_{\cdot\wedge\tau^k}(\pmb{\theta}_k,\pmb{e}_k))$ is of class (D), for $k=0,\dotso,n$,
\item[(c)] $\displaystyle \mathbb{E}\left[\int_{\theta_k}^{\tau^k}\int_E(-U)\left(\mathfrak{X}_s^k(\pmb{\theta}_k,\pmb{e}_k)+\pi_s^k(\pmb{\theta}_k,\pmb{e}_k)'\gamma_s^k(\pmb{\theta}_k,\pmb{e}_k,e)\right)\eta(de)ds\right]<\infty$, \ for $k=0,\dotso,n-1$.
\end{itemize}
\end{definition}
The notation $\mathcal{A}_\mathbb{G},\ \mathcal{T}_\mathbb{G},\ \mathcal{A}^k$ and $\mathcal{T}^k$ from Section 5 are now specified by the above definition. From \thref{theorem 4-2}, $V^0$ in \eqref{3-g} can be calculated by the following backward induction: 
\begin{eqnarray}
\label{4-a}&&V^n(x,\pmb{\theta}_n,\pmb{e}_n)= \esssup_{\tau^n\in\mathcal{T}^n} \esssup_{\pi^n\in\mathcal{A}^n}\mathbb{E}\Big[U(\mathfrak{X}^{n,x}_{\tau^n}+H_{\tau^n}^n)|\mathcal{F}_{\theta_n}\Big],\ \ (\pmb\theta_n,\pmb e_n)\in\Delta_n(T)\times E^n,\\
\label{4-b}&&V^k(x,\pmb{\theta}_k,\pmb{e}_k)= \esssup_{\tau^k\in\mathcal{T}^k} \esssup_{\pi^k\in\mathcal{A}^k}\mathbb{E}\Big[U(\mathfrak{X}^{k,x}_{\tau^k}+H_{\tau^k}^k)\\
&&\hspace{1cm}+\int_{\theta_k}^{\tau^k}\int_EV^{k+1}(\mathfrak{X}_{\theta_{k+1}}^{k,x}+\pi_{\theta_{k+1}}^k\cdot\gamma_{\theta_{k+1}}^k(e_{k+1}),\pmb{\theta}_{k+1},\pmb{e}_{k+1})\eta(de_{k+1})d\theta_{k+1}|\mathcal{F}_{\theta_k}\Big],\notag
\end{eqnarray}
$(\pmb\theta_k,\pmb e_k)\in\Delta_k(T)\times E^k$, for $k=0,\dotso,n-1$, where
$$H^k:=R^k-\frac{1}{p}\ln\alpha^k,$$ in which $\alpha^k$ is given by \eqref{eq:alphak}.

\subsubsection{ Backward recursive system of RBSDEs}
Following \cite{Ying1}, we expect the value function to be of the following form:
\begin{equation}\label{4-aa}V^k(x,\pmb\theta_k,\pmb e_k)=U\left(x+Y_{\theta_k}^k(\pmb\theta_k,\pmb e_k)\right),\end{equation}
where $Y^k(\pmb\theta_k,\pmb e_k)$ is an $\mathbb{F}$-adapted process, satisfying the RBSDE \textbf{eq}$(H^k(\pmb\theta_k,\pmb e_k),f^k)_{\theta_k\leq t\leq T}$, with $f^k$ defined as
\begin{eqnarray}\label{4-i} f^k(t,y,z,\pmb{\theta}_k,\pmb{e}_k)=\inf_{\pi\in A^k}g^k(\pi,t,y,z,\pmb{\theta}_k,\pmb{e}_k),\end{eqnarray}
where
\begin{eqnarray}
\label{2vv} &&g^k(\pi,t,y,z,\pmb{\theta}_k,\pmb{e}_k)=\frac{p}{2}\left| z-\sigma_t^k(\pmb{\theta}_k,\pmb{e}_k)'\pi\right|^2-b_t^k(\pmb{\theta}_k,\pmb{e}_k)'\pi\notag\\
&&\hspace{2.5cm}+\frac{1}{p}U(-y)\displaystyle\int_EU\left(\pi\cdot\gamma_t^k(\pmb{\theta}_k,\pmb{e}_k,e)+Y_t^{k+1}(\pmb{\theta}_k,t,\pmb{e}_k,e)\right)\eta(de)\notag\\
&&\hspace{2.5cm}=-\lambda_t^k(\pmb{\theta}_k,\pmb{e}_k)\cdot z-\frac{1}{2p}|\lambda_t^k(\pmb{\theta}_k,\pmb{e}_k)|^2+\frac{p}{2}\left|z+\frac{1}{p}\lambda_t^k(\pmb{\theta}_k,\pmb{e}_k)-\sigma_t^k(\pmb{\theta}_k,\pmb{e}_k)'\pi\right|^2\notag\\
&&\hspace{2.5cm}+\frac{1}{p}U(-y)\int_EU\big(\pi\cdot\gamma_t^k(\pmb{\theta}_k,\pmb{e}_k,e)+Y_t^{k+1}(\pmb{\theta}_k,t,\pmb{e}_k,e)\big)\eta(de),\notag
\end{eqnarray}
for $k=0,\dotso,n-1$, and 
\begin{eqnarray}
\label{2ww}&&g^n(\pi,t,y,z,\pmb{\theta}_n,\pmb{e}_n)=\frac{p}{2}\left| z-\sigma_t^n(\pmb{\theta}_n,\pmb{e}_n)'\pi\right|^2-b_t^n(\pmb{\theta}_n,\pmb{e}_n)'\pi\notag\\
&&\hspace{2cm}=-\lambda_t^n(\pmb{\theta}_n,\pmb{e}_n)\cdot z-\frac{1}{2p}|\lambda_t^n(\pmb{\theta}_n,\pmb{e}_n)|^2+\frac{p}{2}\left|z+\frac{1}{p}\lambda_t^n(\pmb\theta_n,\pmb{e}_n)-\sigma_t^n(\pmb{\theta}_n,\pmb{e}_n)'\pi\right|^2.\notag
\end{eqnarray}
In the next two subsections, we will show that: (a) The backward recursive system of RBSDEs admits a solution; (b) The solution characterizes the values of $(V^k)$, i.e., \eqref{4-aa} holds.
\subsubsection{Existence to the recursive system of RBSDEs}\label{sec:exsrcsys}
We make the following boundedness assumptions \textbf{(HB)}:
\begin{itemize}
\item[(i)] The risk premium is bounded uniformly with respect to its indices: there exists a constant $C>0$, such that for any $k=0,\dotso,n$, $(\pmb{\theta}_k,\pmb{e}_k)\in\Delta_k(T)\times E^k$, $t\in[\theta_k,T]$,
$$|\lambda_t^k(\pmb{\theta}_k,\pmb{e}_k)|\leq C, \ \ \ \text{a.s.}$$
\item[(ii)] The indexed random variables $(H_t^k)_k$ are bounded uniformly in time and their indices: there exists a constant $C>0$ such that for any $k=0,\dotso,n$, $(\pmb{\theta}_k,\pmb{e}_k)\in\Delta_k(T)\times E^k$, $t\in[\theta_k,T]$,
$$|H_t^k(\pmb{\theta}_k,\pmb{e}_k)|\leq C,\ \ \ \text{a.s.}$$
\end{itemize}

\begin{theorem}\label{theorem 6-3}
Under \textbf{(HB)}, there exists a solution $(Y^k,Z^k,K^k)_{k=0}^n\in\prod_{k=0}^n\mathcal{S}_c^\infty(\Delta_k(T),E^k)\times\mathbf{\bf L}_W^2$ $(\Delta_k(T),E^k)\times\mathbf{A}(\Delta_k(T),E^k) $ to the recursive system of indexed RBSDEs \textbf{eq}$(H^k(\pmb\theta_k,\pmb e_k),f^k)_{\theta_k\leq t\leq T}$, $k=0,\dotso,n$.
\end{theorem}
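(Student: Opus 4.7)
The plan is to proceed by backward induction on $k$ from $n$ down to $0$, exploiting that the driver $f^k$ involves $Y^{k+1}$ only as an exogenous parameter: once $Y^{k+1}$ has been constructed, the $k$-th layer reduces to a family of Brownian-filtration RBSDEs on $[\theta_k,T]$ indexed by $(\pmb\theta_k,\pmb e_k)\in\Delta_k(T)\times E^k$. At each level $k$, the two tasks are (i) producing, for each fixed index, a solution $(Y^k,Z^k,K^k)\in\mathcal{S}_c^\infty[\theta_k,T]\times{\bf L}_W^2[\theta_k,T]\times\mathbf{A}[\theta_k,T]$ to \textbf{eq}$(H^k(\pmb\theta_k,\pmb e_k),f^k)_{\theta_k\leq t\leq T}$ with a uniform $L^\infty$-bound $M_k$ on $Y^k$, and (ii) verifying joint measurability of the triple in $(\pmb\theta_k,\pmb e_k)$, so as to land in the parametrized spaces of the statement.

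For the base case $k=n$, $f^n$ has no $y$-dependence and is the infimum of strictly convex quadratics in $\pi$, giving quadratic growth in $z$; together with the bounded terminal and obstacle $H^n$ from \textbf{(HB)}(ii), existence and the $\mathcal{S}_c^\infty$-bound follow from Kobylanski-type theory for RBSDEs with quadratic growth in $z$ and bounded obstacle, as used in \cite{Ying1}. For the inductive step, assume $Y^{k+1}$ has been constructed with $\|Y^{k+1}\|_\infty\leq M_{k+1}$. The new driver carries the additional nonlinearity $\tfrac{1}{p}U(-y)\int_E U(\pi\cdot\gamma_t^k(e)+Y_t^{k+1}(\cdot,e))\eta(de)$, exponential in $y$. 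First I would derive an a priori $L^\infty$-bound on $Y^k$: the lower bound $Y^k\geq H^k\geq -C$ is immediate from the reflection and \textbf{(HB)}(ii); for the upper bound, using $0\in A^k$ to dominate $f^k\leq g^k(0,t,y,z)=\tfrac{p}{2}|z|^2+\tfrac{1}{p}U(-y)\int_E U(Y_t^{k+1}(\cdot,e))\eta(de)$ and comparing $Y^k$ with the solution of the auxiliary RBSDE driven by $g^k(0,\cdot,\cdot)$, whose solution is bounded in terms of $M_{k+1}$ and the bounds in \textbf{(HB)}, yields the constant $M_k$.

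With the a priori bound in hand, the driver $f^k$ can be truncated in $y$ outside $[-M_k,M_k]$, producing a modified driver that is Lipschitz in $y$ while still convex with quadratic growth in $z$; existence for the truncated equation follows from the monotone-stability scheme for quadratic RBSDEs with bounded obstacle (Lipschitz approximation in $z$ combined with a monotone limit), and the a priori bound ensures the truncation is inactive on the actual solution. Joint measurability of $(Y^k,Z^k,K^k)$ in $(\pmb\theta_k,\pmb e_k)$ follows from the joint measurability of $H^k,b^k,\sigma^k,\gamma^k$ and the inductive hypothesis on $Y^{k+1}$, combined with uniqueness for the truncated RBSDE, via a standard measurable selection argument; the estimate $\mathbb{E}(K^k_T)^2<\infty$, uniform in the indices, comes from the usual $L^2$-estimate for the increasing process of an RBSDE with bounded data. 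The main obstacle will be the simultaneous presence of quadratic growth in $z$ and exponential (non-Lipschitz) growth in $y$ in $f^k$: this prevents any direct Picard iteration or standard quadratic-BSDE comparison, and makes the two-step argument (first an a priori bound from a convenient choice $\pi=0$, then truncation and Kobylanski-type stability) essential for producing the solution inside the required bounded space.
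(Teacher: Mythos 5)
Your overall architecture (backward induction in $k$, truncation of the exponential term, Kobylanski-type existence for the truncated quadratic RBSDE, a uniform bound to remove the truncation, and a measurable selection argument in $(\pmb\theta_k,\pmb e_k)$) is the same as the paper's. However, the step that carries all the weight, namely the uniform upper bound on $Y^k$ for $k<n$, does not work as you describe. Note first the sign convention of \textbf{eq}$(H,f)$: the generator enters as $-\int_t^T f(r,Y_r,Z_r)dr$, so comparison runs opposite to the standard convention — a larger generator produces a smaller solution (this is exactly how the paper applies Kobylanski's comparison, deducing $Y^{k,N}\leq\bar Y^k$ from $f^{k,N}\geq\bar f^k$). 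Consequently your domination $f^k\leq g^k(0,t,y,z)$, compared against the RBSDE driven by $g^k(0,\cdot)$, yields a \emph{lower} bound on $Y^k$, which is already available for free from the obstacle $Y^k\geq H^k\geq -C$; it contributes nothing toward the needed upper bound. Moreover, even leaving the direction aside, the auxiliary generator $g^k(0,t,y,z)=\frac{p}{2}|z|^2+\frac{1}{p}U(-y)\int_E U\big(Y_t^{k+1}(\pmb\theta_k,t,\pmb e_k,e)\big)\eta(de)$ still contains the term $\frac{1}{p}e^{py}\int_E e^{-pY^{k+1}}\eta(de)$, exponential in $y$, so its RBSDE is no more covered by Lipschitz or Kobylanski-type results than the original one; asserting that its solution \lq\lq is bounded in terms of $M_{k+1}$ and \textbf{(HB)}\rq\rq\ is circular, since boundedness of precisely such an equation is what has to be proved.

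The correct move, and the one the paper makes, is to minorize rather than majorize: since $g^k(\pi,t,y,z)=-\lambda_t^k\cdot z-\frac{1}{2p}|\lambda_t^k|^2+\frac{p}{2}\big|z+\frac{1}{p}\lambda_t^k-(\sigma_t^k)'\pi\big|^2+\frac{1}{p}U(-y)\int_E U(\cdot)\eta(de)$ and the last two terms are nonnegative, one has $f^{k,N}\geq\bar f^k(t,z):=-\lambda_t^k\cdot z-\frac{1}{2p}|\lambda_t^k|^2$, a Lipschitz generator with bounded data under \textbf{(HB)}; the RBSDE \textbf{eq}$(H^k,\bar f^k)$ then has a bounded solution $\bar Y^k$ by the Lipschitz theory of El Karoui et al., and Kobylanski's comparison applied to the truncated equation gives $Y^{k,N}\leq\bar Y^k\leq C$ uniformly in $N$ and in the indices, after which the truncation (which only caps $y$ from above) is inactive for $N$ large. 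A related, smaller point: an \lq\lq a priori\rq\rq\ bound on a not-yet-constructed solution of the untruncated equation would itself require a comparison theorem valid for a non-Lipschitz generator with exponential growth in $y$; the paper sidesteps this by truncating first and bounding the truncated solutions, and you should adopt that order as well.
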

\begin{proof} We prove the result by a backward induction on $k=0,\dotso,n$. The positive constant $C$ may vary from line to line, but is always independent of $(t,\omega,\pmb\theta_k,\pmb e_k)$. We will often omit the dependence of $(t,\omega,y,z,\pmb\theta_k,\pmb e_k)$ in related functions.
\vspace{0.2cm}

\noindent (a) For $k=n$. Under \textbf{(HB)}, $|f^n|\leq C(|z|^2+1)$. By Theorem 1 in \cite{Kobylanski1}, there exists a solution $\big(Y^n(\pmb{\theta}_n,\pmb{e}_n),Z^n(\pmb{\theta}_n,\pmb{e}_n),K^n$ $(\pmb{\theta}_n,\pmb{e}_n)\big)\in\mathcal{S}_c^\infty[\theta_n,T]\times\mathbf{\bf L}_W^2[\theta_n,T]\times\mathbf{A}[\theta_n,T]$ for \textbf{eq}$\displaystyle(H^n,f^n)_{\theta_n\leq t\leq T}$, satisfying $|Y^n|\leq C$. Moreover, the measurability of $(Y^n,Z^n)$ with respect to $(\pmb\theta_n,\pmb e_n)$ follows from the measurability of $H^n$ and $f^n$ (see  Appendix C in \cite{Lim1} and use the fact that the solution to the RBSDE can be eventually approximated by the solutions to BSDEs). Therefore, $(Y^n,Z^n,K^n)\in\mathcal{S}_c^\infty(\Delta_n(T),E^n)\times\mathbf{\bf L}_W^2(\Delta_n(T),E^n)\times\mathbf{A}(\Delta_n(T),E^n)$.
\vspace{0.2cm}
  
\noindent (b) For $k\in\{0,1,\dotso,n-1\}$. Assume there exists $(Y^{k+1},Z^{k+1},K^{k+1})\in\mathcal{S}_c^\infty(\Delta_{k+1}(T),E^{k+1})\times\mathbf{L}_W^2(\Delta_{k+1}(T),E^{k+1})\times\mathbf{A}(\Delta_{k+1}(T),E^{k+1})$ satisfying \textbf{eq}$(H^{k+1},f^{k+1})$. Since $Y^{k+1}\in\mathcal{P}_\mathbb{F}(\Delta_{k+1},$ $E^{k+1})$, the generator in \eqref{4-i} is well defined. In order to overcome the technical difficulties coming from the exponencial term in $U(-y)$, we first consider the truncated generator
\begin{equation}\label{2j}f^{k,N}(t,y,z,\pmb{\theta}_k,\pmb{e}_k)=\inf_{\pi\in A^k}g^k(\pi,t,N\wedge y,z,\pmb{\theta}_k,\pmb{e}_k).\notag\end{equation}
Then there exists a positive constant $C_N$ independent of $(\pmb\theta_k,\pmb e_k)$, such that $|f^{k,N}|\leq C_N(1+z^2)$. Applying Theorem 1 in \cite{Kobylanski1}, there exists a solution $(Y^{k,N}, Z^{k,N},K^{k,N})\in\mathcal{S}_c^\infty(\Delta_k(T),E^k)\times\mathbf{\bf L}_W^2(\Delta_k(T),E^k)\times\mathbf{A}(\Delta_k(T),E^k)$ to \textbf{eq}$(H^k,f^{k,N})$. 

Now we will show that $Y^{k,N}$ has a uniform upper bound. Consider the generator
$$\bar{f}^k(t,y,z,\pmb{\theta}_k,\pmb{e}_k):=-\lambda_t^k(\pmb{\theta}_k,\pmb{e}_k)\cdot z-\frac{1}{2p}|\lambda_t^k(\pmb{\theta}_k,\pmb{e}_k)|^2,$$
which satisfies the Lipschitz condition in $(y,z)$, uniformly in $(t,\omega)$. Then by Theorem 5.2 in \cite{El1}, there exists a unique solution  $\big(\bar Y^k(\pmb{\theta}_k,\pmb{e}_k),\bar Z^k(\pmb{\theta}_k,\pmb{e}_k),\bar K^k(\pmb{\theta}_k,\pmb{e}_k)\big)\in\mathcal{S}_c^\infty[\theta_k,T]\times\mathbf{\bf L}_W^2[\theta_k,T]\times\mathbf{A}[\theta_k,T]$ satisfying $|\bar Y^k|\leq C$ (see Theorem 1 in \cite{Kobylanski1} for the boundedness). Applying Lemma 2.1(comparison) in \cite{Kobylanski1}, we get $Y^{k,N}\leq\bar Y^k$. Hence, $Y^{k,N}$ has a uniform upper bound independent of $N$ and $(\pmb\theta_k,\pmb e_k)$. Therefore, for $N$ large enough, we can remove \lq\lq$N$\rq\rq\ in the truncated generator $f^{k,N}$, i.e., $(Y^{k,N},Z^{k,N},K^{k,N})$ solves \textbf{eq}$(H^k,f^k)$ for large enough $N$.
\end{proof}

\subsubsection{RBSDE characterization by verification theorem}
\begin{theorem}\label{theorem 4-4}
The value functions $(V^k)_{k=0}^n$, defined in \eqref{4-a} and \eqref{4-b}, are given by
\begin{equation}\label{2rr} V^k(x,\pmb{\theta}_k,\pmb{e}_k)=U\big(x+Y_{\theta_k}^k(\pmb{\theta}_k,\pmb{e}_k)\big),\end{equation}
for $\forall x\in\mathbb{R},\ (\pmb{\theta}_k,\pmb{e}_k)\in\Delta_k(T)\times E^k$, where $(Y^{k},Z^{k},K^{k})_{k=0}^n\in\mathop{\prod}\limits_{k=0}^{n}\mathcal{S}_c^\infty$ $(\Delta_{k}(T),E^{k})\times\mathbf{\bf L}_W^2(\Delta_{k}(T),E^{k})\times\mathbf{A}(\Delta_{k}(T),E^{k})$ is a solution of the RBSDE system \textbf{eq}$(H^k,f^k),\ k=0,\dotso,n$. Moreover, there exists an optimal trading strategy $(\pi,\tau)\sim(\hat\pi^k,\hat\tau^k)_{k=0}^n$ described by:
\begin{equation}\label{2ss}\hat\pi_t^k(\pmb{\theta}_k,\pmb{e}_k)\in\argmin_{\pi\in A^k}g^k\big(\pi,t,Y_t^k(\pmb\theta_k,\pmb e_k),Z_t^k(\pmb\theta_k,\pmb e_k),\pmb{\theta}_k,\pmb{e}_k\big),\notag\end{equation}
for $t\in[\theta_k,T]$, and
\begin{equation}\label{2uu}\hat\tau^k(\pmb{\theta}_k,\pmb{e}_k):=\inf\left\{t\geq \theta_k: \ Y_t^k(\pmb{\theta}_k,\pmb{e}_k)=H_t^k(\pmb{\theta}_k,\pmb{e}_k)\right\},\end{equation}
for $(\pmb{\theta}_k,\pmb{e}_k)\in\Delta_k(T)\times E^k$, a.s., $k=0,\dotso,n$.
\end{theorem}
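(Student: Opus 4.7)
The plan is to prove both $V^k(x,\pmb\theta_k,\pmb e_k) = U(x + Y^k_{\theta_k}(\pmb\theta_k,\pmb e_k))$ and the optimality of $(\hat\pi^k,\hat\tau^k)$ jointly by a backward induction on $k = n, n-1, \ldots, 0$, building on the decomposition of Theorem~\ref{theorem 4-2}, the existence result of Theorem~\ref{theorem 6-3}, and the exponential-utility verification technique of \cite{Ying1}. Fix $k$ and an admissible pair $(\pi^k,\tau^k) \in \mathcal{A}^k\times\mathcal{T}^k$, and set $\xi_t := \mathfrak{X}^{k,x}_t + Y^k_t(\pmb\theta_k,\pmb e_k)$ for $t \in [\theta_k,\tau^k]$. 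Using $U'(\xi) = -pU(\xi) > 0$, $U''(\xi) = -pU'(\xi)$, the dynamics of $\mathfrak{X}^{k,x}$, and the RBSDE \textbf{eq}$(H^k,f^k)$ satisfied by $(Y^k,Z^k,K^k)$, It\^o's formula yields
\begin{equation*}
dU(\xi_t) = U'(\xi_t)\bigl[(\pi^k_t)'\sigma^k_t - Z^k_t\bigr]dW_t + U'(\xi_t)\bigl\{(\pi^k_t)'b^k_t + f^k_t - \tfrac{p}{2}|(\pi^k_t)'\sigma^k_t - Z^k_t|^2\bigr\}dt - U'(\xi_t)\,dK^k_t.
\end{equation*}

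For the base case $k = n$, the definition $f^n = \inf_\pi g^n(\pi,\cdot)$ and the explicit form of $g^n$ give $f^n_t + (\pi^n_t)'b^n_t - \tfrac{p}{2}|(\pi^n_t)'\sigma^n_t - Z^n_t|^2 \leq 0$, with equality when $\pi^n$ is an argmin. Hence $U(\xi)$ is a local supermartingale, which the class (D) requirement in Definition~\ref{def 4-1}(b) upgrades to a true supermartingale on $[\theta_n,\tau^n]$. Combined with $Y^n \geq H^n$ and the monotonicity of $U$, taking $\mathbb{E}[\,\cdot\mid\mathcal{F}_{\theta_n}]$ yields $V^n(x,\pmb\theta_n,\pmb e_n) \leq U(x + Y^n_{\theta_n})$. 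The reverse inequality follows by choosing $\hat\pi^n$ (an argmin) and $\hat\tau^n = \inf\{t \geq \theta_n : Y^n_t = H^n_t\}$: with $\hat\pi^n$ the drift vanishes, while the Skorokhod condition $\int_{\theta_n}^T (Y^n - H^n)\,dK^n = 0$ forces $K^n$ to be constant on $[\theta_n,\hat\tau^n]$, so $U(\xi)$ becomes a true martingale there; continuity of $Y^n$ and the definition of $\hat\tau^n$ give $Y^n_{\hat\tau^n} = H^n_{\hat\tau^n}$, closing the identification.

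For the inductive step $k < n$, the extra term $\int_{\theta_k}^{\tau^k}\int_E V^{k+1}(\cdot)\eta(de_{k+1})\,d\theta_{k+1}$ appearing in the decomposition of $V^k$ from Theorem~\ref{theorem 4-2} must be absorbed. The algebraic linchpin is the exponential factorization $U(a+b) = -U(a)U(b)$: coupled with the inductive identity $V^{k+1}(x',\cdot) = U(x' + Y^{k+1}_{\theta_{k+1}}(\cdot))$, a direct computation shows that the drift term of $U(\xi_t)$ above is bounded by $-\int_E V^{k+1}(\mathfrak{X}^{k,x}_t + \pi^k_t\cdot\gamma^k_t(e),\pmb\theta_k,t,\pmb e_k,e)\,\eta(de)$, with equality when $\pi^k = \hat\pi^k$. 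I would therefore introduce the augmented process
\begin{equation*}
M_t := U(\xi_t) + \int_{\theta_k}^t\int_E V^{k+1}\bigl(\mathfrak{X}^{k,x}_s + \pi^k_s\cdot\gamma^k_s(e),\pmb\theta_k,s,\pmb e_k,e\bigr)\eta(de)\,ds
\end{equation*}
on $[\theta_k,\tau^k]$; this is a local supermartingale (true martingale for $\pi^k = \hat\pi^k$), and the stopping-and-reflection argument replays verbatim from the base case to give both $V^k = U(x + Y^k_{\theta_k})$ and optimality of $(\hat\pi^k,\hat\tau^k)$.

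The main obstacles I anticipate are threefold. First, producing a measurable argmin $\hat\pi^k$ in the closed set $A^k$ that belongs to $\mathcal{A}^k$: the coercive quadratic structure of $g^k$ in $\pi$ secures pointwise existence, but measurability in $(t,\omega,\pmb\theta_k,\pmb e_k)$ requires a measurable selection theorem (e.g., Wagner), and the admissibility conditions (a)--(c) of Definition~\ref{def 4-1} must then be checked using the uniform $L^\infty$-bounds on $(Y^k,Y^{k+1})$ supplied by Theorem~\ref{theorem 6-3} together with assumption \textbf{(HB)}. Second, promoting local (super)martingales to true ones and commuting expectation with the Skorokhod integral relies crucially on Definition~\ref{def 4-1}(b)--(c). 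Third, the sign bookkeeping around the factorization $U(a+b) = -U(a)U(b)$ is the single place where a computational slip would silently break the entire verification, so substitution of the inductive hypothesis into the generator $g^k$ is the step I would scrutinize most carefully.
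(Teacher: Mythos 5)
Your overall scheme is the one the paper uses: the augmented process $M$ you introduce is exactly the paper's process $\xi^k_t(x,\pmb{\theta}_k,\pmb{e}_k,\nu^k)=U(\mathfrak{X}^{k,x}_t+Y^k_t)+\int_{\theta_k}^t\int_E U\big(\mathfrak{X}^{k,x}_r+\nu^k_r\cdot\gamma^k_r(e)+Y^{k+1}_r\big)\eta(de)\,dr$ (your use of the induction identity $V^{k+1}=U(\cdot+Y^{k+1}_{\theta_{k+1}})$ and the factorization $U(a+b)=-U(a)U(b)$ reproduces it), and the upper bound, the measurable-selection construction of $\hat\pi^k$, and the Skorokhod argument at $\hat\tau^k$ all match the paper's Steps 1, 3 and 4. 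Your Itô computation and the sign analysis of the drift are correct.

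The genuine gap is in how you propose to verify admissibility of the candidate optimizer, in particular Definition~\ref{def 4-1}(b), and hence how you upgrade the local martingale under $\hat\pi^k$ to a true martingale in the reverse inequality. You assert that conditions (a)--(c) ``must then be checked using the uniform $L^\infty$-bounds on $(Y^k,Y^{k+1})$ together with \textbf{(HB)}'', but boundedness of $Y^k$ gives no control whatsoever on $U(\hat{\mathfrak{X}}^{k,x}_{\cdot\wedge\hat\tau^k})=-\exp(-p\,\hat{\mathfrak{X}}^{k,x}_{\cdot\wedge\hat\tau^k})$, since $\hat{\mathfrak{X}}^{k,x}$ is an unbounded stochastic integral driven by $\hat\pi^k$, which is itself only controlled through $Z^k$. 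The paper needs a dedicated step (its Step~2): applying It\^o's formula to $\exp(-qY^k_t)$ with $q>p$, and using $|f^k(t,y,z)|\le \frac{p}{2}|z|^2-CU(-y)$ together with $dK^k\ge 0$, it shows that $\int_{\theta_k}^{\cdot}Z^k_s\cdot dW_s$ is a BMO-martingale. Combined with the bound $|\sigma^k{}'\hat\pi^k_t|\le C(1+|Z^k_t|)$ (from $\hat g^k(\hat\pi^k_t,\cdot)\le\hat g^k(0,\cdot)$, which also gives condition (a)), this yields the multiplicative decomposition $U(\hat{\mathfrak{X}}^{k,x}_t+Y^k_t)=U(x+Y^k_{\theta_k})\,\mathcal{E}_t\big(p(Z^k-\sigma^k{}'\hat\pi^k)\big)R^k_t$ with $|R^k_{\cdot\wedge\hat\tau^k}|\le 1$, and the class (D) property of the stochastic exponential of a BMO-martingale is what delivers condition (b); condition (c) then follows by localization and Fatou. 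Without this BMO estimate your phrase ``so $U(\xi)$ becomes a true martingale there'' is unjustified, and the identification $V^k=U(x+Y^k_{\theta_k})$ together with the optimality of $(\hat\pi^k,\hat\tau^k)$ cannot be closed; this is the one substantive ingredient missing from your proposal.
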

\begin{proof}
\textbf{Step 1:} \textbf{We will show} 
\begin{equation}\label{2u}U(x+Y^k_{\theta_k}(\pmb{\theta}_k,\pmb{e}_k))\geq V^k(x,\pmb{\theta}_k,\pmb{e}_k),\text{{ }{ }}k=0,\dotso,n.\end{equation}
Let  $(Y^{k},Z^{k},K^{k})\in\mathcal{S}_c^\infty(\Delta_{k}(T),E^{k})\times\mathbf{\bf L}_W^2(\Delta_{k}(T),E^{k})\times\mathbf{A}(\Delta_{k}(T),E^{k})$ be a solution of the RBSDE system. For $(\nu^k,\tau^k)\in\mathcal{A}^k\times\mathcal{T}^k$, $x\in\mathbb{R}, \ (\pmb{\theta}_k,\pmb{e}_k)\in\Delta_k(T)\times E^k$ and $t\geq \theta_k$, and define
\begin{eqnarray}
\label{2v}\xi_t^k(x,\pmb{\theta}_k,\pmb{e}_k,\nu^k)&:=&U(\mathfrak{X}_t^{k,x}+Y_t^k(\pmb{\theta}_k,\pmb{e}_k))+\int_{\theta_k}^t\int_EU\Big(\mathfrak{X}_r^{k,x}+\nu_r^k\cdot\gamma_r^k(\pmb{\theta}_{k},e_{k},e)\notag\\
&&+Y_r^{k+1}(\pmb\theta_k,r,\pmb{e}_k,e)\Big)\eta(de)dr, \ \ \ k=0,\dotso,n-1,\notag\\
\label{2w}\xi_t^n(x,\pmb{\theta}_n,\pmb{e}_n,\nu^n)&:=&U(\mathfrak{X}_t^{n,x}+Y_t^n(\pmb{\theta}_n,\pmb{e}_n)).\notag\end{eqnarray}
Applying It$\hat{\text{o}}$'s formula, we get for $k=0,\dotso,n$, 
\begin{equation}\label{2x}\hspace{-2.1cm}\xi_t^k(x,\pmb{\theta}_k,\pmb{e}_k,\nu^k)=pU\left(\mathfrak{X}_t^{k,x}+Y_t^k(\pmb{\theta}_k,\pmb{e}_k)\right) \Big[\big(-f^k(t,Y_t^k,Z_t^k,\pmb{\theta}_k,\pmb{e}_k)\notag\end{equation}
$$\hspace{4cm}+g^k(\nu_t^k,t,Y_t^k,Z_t^k,\pmb{\theta}_k,\pmb{e}_k)\big)dt+dK_t^k(\pmb{\theta}_k,\pmb{e}_k)+(Z_t^k-\sigma_t^k(\pmb{\theta}_k,\pmb{e}_k)'\nu_t^k)\cdot dW_t\Big],$$
$\displaystyle f^k(\cdot)=\inf_{\pi\in A^k}g^k(\pi,\cdot)$ implies $\displaystyle\left\{\xi_s^k(x,\pmb{\theta}_k,\pmb{e}_k,\nu^k)\right\}_{\theta_k\leq t\leq T}$ is a local super-martingale, for $k=0,\dotso,n$. Since $Y^k$ and $Y^{k+1}$ are essentially bounded, and $\xi_{t\wedge\tau^k\wedge\rho_m}^k(x,\pmb{\theta}_k,\pmb{e}_k,\nu^k)$ is uniformly integrable, by considering a localizing sequence of stopping times, we can show $\left\{\xi_{t\wedge\tau^k}^k(x,\pmb{\theta}_k,\pmb{e}_k,\nu^k)\right\}_{\theta_k\leq t\leq T}$ is a super-martingale. Consider when $k=n$. Since $Y^n\geq H^n$, we have
\begin{equation}\label{2cc}U\left(x+Y_{\theta_n}^n(\pmb{\theta}_n,\pmb{e}_n)\right)\geq\mathbb{E}\left[U\left(\mathfrak{X}_{\tau^n}^{n,x}+H_{\tau^n}^n(\pmb{\theta}_n,\pmb{e}_n)\right)|\mathcal{F}_{\theta_n}\right].\end{equation}
Therefore, \eqref{2u} holds for $k=n$. Similarly, it holds for $k=0,\dotso,n-1$.\\
\noindent\textbf{Step 2:} \textbf{$\int_{\theta_k}^\cdot Z_s^k(\pmb{\theta}_k,\pmb{e}_k)\cdot dW_s$ is a BMO-martingale.} Apply It$\hat{\text{o}}$'s formula to $\exp(-qY_t^k(\pmb{\theta}_k,\pmb{e}_k))$ with $q>p$ and  any $\mathbb{F}$-stopping time $\tau$ valued in $[\theta_k,T]$,
\begin{eqnarray}
&&\hspace{-1.5cm}\frac{1}{2}q(q-p)\mathbb{E}\left[\displaystyle\int_\tau^T \exp\left(-qY_t^k(\pmb{\theta}_k,\pmb{e}_k)\right)|Z_t^k(\pmb{\theta}_k,\pmb{e}_k)|^2dt\Big|\mathcal{F}_\tau\right]\notag\\
&&=q\mathbb{E}\left[\int_\tau^T \exp\left(-qY_t^k(\pmb{\theta}_k,\pmb{e}_k)\right)\left(f^k(t,Y_t^k,Z_t^k,\pmb{\theta}_k,\pmb{e}_k)-\displaystyle\frac{p}{2}|Z_t^k|^2\right)dt\Big|\mathcal{F}_\tau\right]\notag\\
&&+\mathbb{E}\left[ \exp\left(-qY_T^k(\pmb{\theta}_k,\pmb{e}_k)\right)-\exp\left(-qY_{\tau}^k(\pmb{\theta}_k,\pmb{e}_k)\right)\Big|\mathcal{F}_{\tau}\right]\notag\\
&&-q\mathbb{E}\left[\int_\tau^T\exp\left(-qY_t^k(\pmb{\theta}_k,\pmb{e}_k)\right)dK_t^k(\pmb{\theta}_k,\pmb{e}_k)\Big|\mathcal{F}_\tau\right].\notag
\end{eqnarray}
Since $|f^k(t,y,z,\pmb{\theta}_k,\pmb{e}_k)|\leq\displaystyle\frac{p}{2}|z|^2-CU(-y)$, $dK^k\geq 0$ and $Y^k$ is bounded, we have
\begin{equation}\hspace{-4.5cm}\frac{1}{2}q(q-p)\mathbb{E}\left[\int_\tau^T \exp\left(-qY_t^k(\pmb{\theta}_k,\pmb{e}_k)\right)|Z_t^k(\pmb{\theta}_k,\pmb{e}_k)|^2dt\Big|\mathcal{F}_\tau\right]\notag\end{equation}
$$\leq qC \mathbb{E}\left[\int_\tau^T\exp\left(-qY_t^k(\pmb{\theta}_k,\pmb{e}_k)\right)dt\Big|\mathcal{F}_\tau\right]+C.$$
By choosing $q$ large enough, we have
$$\mathbb{E}\left[\int_\tau^T \left| Z_s^k(\pmb{\theta}_k,\pmb{e}_k)\right|^2 ds\Big|\mathcal{F}_\tau\right]\leq C,$$
which implies  $\int_{\theta_k}^\cdot Z_s^k(\pmb{\theta}_k,\pmb{e}_k)\cdot dW_s$ is a BMO-martingale.\\
\noindent \textbf{Step 3:} \textbf{Adimissibility of $(\hat\pi^k,\hat\tau^k)$.} For $k=0,\dotso,n$, define function $\hat g^k$ by
\begin{equation}\hat g^k(\pi,t,\omega,\pmb{\theta}_k,\pmb{e}_k)=g^k\big(\pi,t,Y_t^k(\pmb\theta_k,\pmb e_k),Z_t^k(\pmb\theta_k,\pmb e_k),\pmb{\theta}_k,\pmb{e}_k\big).\notag\end{equation}
We can show that the map $(\pi,t,\omega,\pmb{\theta}_k,\pmb{e}_k)\rightarrow \hat g^k(\pi,t,\omega,\pmb{\theta}_k,\pmb{e}_k)$ is $\mathcal{B}(\mathbb{R}^d)\otimes\mathcal{P}_\mathbb{F}\otimes\mathcal{B}(\Delta_k)\otimes\mathcal{B}(E_k)$-measurable. Now for $k=0,\dotso,n$, $(\pmb{\theta}_k,\pmb{e}_k)\in\Delta_k(T)\times E^k$, if either $\sigma^k(\pmb{\theta}_k,\pmb{e}_k)=0$ or $\gamma^k(\pmb{\theta}_k,\pmb{e}_k,e)=0$, then the continuous function $\pi\rightarrow\hat g^k(\pi,t,\omega,\pmb{\theta}_k,\pmb{e}_k)$ attains trivially its infimum of $\hat g^k$ when $\pi=0$.  Otherwise, $\sigma^k(\pmb{\theta}_k,\pmb{e}_k)$ and $\gamma^k(\pmb{\theta}_k,\pmb{e}_k,e)$ are in the form $\sigma^k(\pmb{\theta}_k,\pmb{e}_k)=(\bar \sigma^k(\pmb{\theta}_k,\pmb{e}_k),0),$ $ \ \gamma^k(\pmb{\theta}_k,\pmb{e}_k)=(\bar \gamma(\pmb{\theta}_k,\pmb{e}_k), 0)$ for some full rank matrix $\bar\sigma^k(\pmb{\theta}_k,\pmb{e}_k)$. In this case, we let $(\bar \pi,0)=(\sigma^k)'\cdot\pi$, then we get
\begin{eqnarray}
\label{2mm}&&\bar g^k(\bar\pi,t,\omega,\pmb{\theta}_k,\pmb{e}_k):=\hat g^k(\pi,t,\omega,\pmb{\theta}_k,\pmb{e}_k)=\frac{p}{2}\left|Z_t^k(\pmb{\theta}_k,\pmb{e}_k)+\frac{1}{p}\lambda_t^k(\pmb{\theta}_k,\pmb{e}_k)-\bar \pi\right|^2\notag\\
&&\hspace{2.5cm}+\frac{1}{p}U(-Y_t^k)\int_EU\left( ((\bar \sigma^k )')^{-1}\cdot\bar \pi\cdot \bar \gamma_t^k(e)+Y_t^{k+1}(\pmb{\theta}_k,t,\pmb{e}_k,e)\right)\eta(de),\notag
\end{eqnarray}
for $k=0,\dotso,n-1$, and
\begin{equation}\label{2nn}\bar g^n(\bar\pi,t,\omega,\pmb{\theta}_n,\pmb{e}_n):=\hat g^n(\pi,t,\omega,\pmb{\theta}_n,\pmb{e}_n)=\frac{p}{2}\left|Z_t^n(\pmb{\theta}_n,\pmb{e}_n)+\frac{1}{p}\lambda_t^n(\pmb{\theta}_n,\pmb{e}_n)-\bar \pi\right|^2.\notag\end{equation}
Since
\begin{equation} \bar g^k(0,t,\omega,\pmb{\theta}_k,\pmb{e}_k)<\liminf_{|\bar \pi|\rightarrow\infty}\bar g^k(\bar \pi,t,\omega,\pmb{\theta}_k,\pmb{e}_k), \notag\end{equation}
the continuous function $\bar \pi\rightarrow\bar g^k(\bar\pi,\,t,\omega,\pmb{\theta}_k,\pmb{e}_k)$ attains its infimum over the closed set $(\sigma_t^k)'A^k$, and thus the function $\pi\rightarrow \hat g^k(\pi,t,\omega,\pmb{\theta}_k,\pmb{e}_k)$ attains its infimum over $A^k(\pmb{\theta}_k,\pmb{e}_k)$. For $k=0,\dotso,n$, using a measurable selection argument (see \cite{Wagner1}), one can show that there exists $\hat\pi^k\in\mathcal{P}_{\mathbb{F}}(\Delta_k,E^k)$, such that
\begin{equation}\hat\pi_t^k(\pmb{\theta}_k,\pmb{e}_k)\ \in\ \argmin_{\pi\in A^k(\pmb{\theta}_k,\pmb{e}_k)}\hat g^k(\pi,t,\pmb{\theta}_k,\pmb{e}_k),\ \ \ \theta_k\leq t\leq T,\ \ \text{a.s.} \notag\end{equation}
Consider $\hat\tau^k$ defined in \eqref{2uu}. For $k=0,\dotso,n$, define $\tilde\tau^k(\pmb\zeta_k,\pmb\ell_k)$ as
$$\tilde\tau^k:=\Big(\inf\{t\geq\zeta_k:\ Y^k(\pmb\zeta_k,\pmb\ell_k)=H^k(\pmb\zeta_k,\pmb\ell_k)\}\wedge T\Big)\cdot 1_{\{\zeta_k\leq T\}}+\zeta_k\cdot 1_{\{\zeta_k>T\}}.$$
We can show that $\tilde\tau^k(\pmb\zeta_k,\pmb\ell_k)$ is a $\mathbb{G}^k$ stopping time satisfying $\tilde\tau^k(\pmb\zeta_k,\pmb\ell_k)\geq\zeta_k$ and $\{\tilde\tau^k(\pmb\zeta_k,\pmb\ell_k)\leq T\}=\{\zeta_k\leq T\}$. And given $(\pmb\zeta_k,\pmb\ell_k)=(\pmb\theta_k,\pmb e_k)\in\Delta_k(T)\times E^k$, $\tilde\tau^k(\pmb\theta_k,\pmb e_k)=\hat\tau^k(\pmb\theta_k,\pmb e_k)$.
Now we will show that $(\hat\pi^k,\hat\tau^k)_{k=0}^n$ is admissible in the sense of \deref{def 4-1}.
\medskip

\noindent (a) Since $\hat g^k(\hat\pi_t^k,t,\pmb{\theta}_k,\pmb{e}_k)\leq\hat g^k(0,t,\pmb{\theta}_k,\pmb{e}_k)$, there exists a constant $C>0$, such that
$$|\sigma_t^k(\pmb{\theta}_k,\pmb{e}_k)'\hat\pi_t^k(\pmb{\theta}_k,\pmb{e}_k)|\leq C(1+|Z_t^k(\pmb{\theta}_k,\pmb{e}_k)|),\ \ \theta_k\leq t\leq T,\ \ \text{a.s.},$$
for all $(\pmb{\theta}_k,\pmb{e}_k)\in\Delta_k(T)\times E^k,\ k=0,\dotso,n$. Since $Z^k\in{\bf L}_W^2(\Delta_k,E^k)$ and because of  \textbf{(HB)}(i), $(\hat\pi^k,\hat\tau^k)_{k=0}^n$ satisfies condition (a) in \deref{def 4-1}.
\medskip

\noindent (b) Denote by $\hat{\mathfrak{X}}^{k.x}$ the wealth process controlled by $\hat\pi^k$, starting from $x$ at time $\theta_k$. We have
\begin{equation}\label{a4}f^k(t,Y_t^k,Z_t^k,\pmb{\theta}_k,\pmb{e}_k)=g^k(\hat\pi_t^k,t,Y_t^k,Z_t^k,\pmb\theta_k,\pmb e_k),\notag\end{equation}
for $k=0,\dotso,n$. Then for $\theta_k\leq t\leq T$,
$$U(\hat{\mathfrak{X}}_t^{k,x}+Y_t^k)=U(x+Y_{\theta_k}^k)\ \mathcal{E}_t^k\left(p(Z^k-(\sigma^k)'\hat\pi^k)\right)R_t^k,$$
where
$$\mathcal{E}_t^k\left(p(Z^k-(\sigma^k)'\hat\pi^k)\right)=\exp\left(p\int_{\theta_k}^t(Z_s^k-(\sigma_s^k)'\hat\pi_s^k)\cdot dW_s-\frac{p^2}{2}\int_{\theta_k}^t|Z_s^k-(\sigma_s^k)'\hat\pi_s^k|^2ds\right),$$
for $k=0,\dotso ,n$, and
$$R_t^k=\exp\left(pK_t^k-\int_{\theta_k}^tU(-Y_s^k)\int_EU\left(\hat\pi_t^k\cdot\gamma_t^k(\pmb{\theta}_k,\pmb{e}_k)+Y_t^{k+1}(\pmb{\theta}_k,t,\pmb{e}_k,e)\right)\eta(de)ds\right),$$
for $k=0,\dotso,n-1$ and $R_t^n=\exp (pK_t^n)$. From Step 2, $\int_{\theta_k}^\cdot p(Z^k-(\sigma^k)\hat\pi^k)\cdot dW$ is a BMO-martingale and hence $\mathcal{E}_{\cdot\wedge\hat\tau^k}^k\left(p(Z^k-(\sigma^k)'\hat\pi^k)\right)$ is of class (D). Moreover, since $U$ is nonpositive and $K_t^k=0$ when $t\leq \hat\tau^k$, we have $|R_{\cdot\wedge\hat\tau^k}|\leq 1$, and thus $U(\hat{\mathfrak{X}}_{t\wedge\hat\tau^k}^{k,x}+Y_{t\wedge\hat\tau^k}^k)$ is of class (D). So is $U(\hat{\mathfrak{X}}^{k,x}_{\cdot\wedge\hat\tau^k})$ since $Y^k$ is essentially bounded.
\medskip

\noindent (c) Because $dK_t^k=0$ when $t\leq \hat\tau^k$, the process $\xi_{\cdot\wedge\hat\tau^k}^k(x,\pmb{\theta}_k,\pmb{e}_k,e)$ defined in Step 1 under control $\hat\pi^k$ is a local martingale. By considering a localizing $\mathbb{F}$-stopping time sequence $(\rho_m)_m$ valued in $[\theta_k,T]$, we obtain:
\begin{eqnarray}
&&\mathbb{E}\left[\int_{\theta_k}^{\hat\tau^k\wedge\rho_m}\int_E(-U)\left(\hat{\mathfrak{X}}_t^{k,x}+\hat\pi_t^k\cdot\gamma_t^k(\pmb{\theta}_k,\pmb{e}_k,e)+Y_t^{k+1}(\pmb{\theta}_k,t,\pmb{e}_k,e)\right)\eta(de)dt\right]\notag\\
&&=\mathbb{E}\left[U(\hat{\mathfrak{X}}_{\hat\tau^k\wedge\rho_m}^{k,x}+Y_{\hat\tau^k\wedge\rho_m}^k)-U(x+Y_{\theta_k}^k)\right]\leq\mathbb{E}\left[-U(x+Y_{\theta_k}^k)\right],\notag
\end{eqnarray}
By Fatou's lemma, we get Condition (c) in \deref{def 4-1} holds.\\
\noindent\textbf{Step 4:} \textbf{We will show \eqref{2rr} holds and $(\hat\pi^k,\hat\tau^k)_{k=0}^n$ is an optimal trading strategy.} Consider when $k=n$. By the admissibility of $(\hat\pi^n,\hat\tau^n)$, the local martingale $\xi_{t\wedge \hat\tau^n}$ under the control $\hat\pi^n$ is a martingale. Thus, 
\begin{equation}\label{2pp}U(x+Y_{\theta_n}^n)=\mathbb{E}\left[U(\hat{\mathfrak{X}}_{\hat \tau^n}^{n,x}+H_{\hat\tau^n}^n)\Big| \mathcal{F}_{\theta_n}\right].\notag\end{equation}
Along with \eqref{2cc} this results in
\begin{eqnarray}
V^n(x,\pmb{\theta}_n,\pmb{e}_n)&=&\esssup_{\tau^n\in\mathcal{T}^n}\esssup_{\pi^n\in\mathcal{A}^n}\mathbb{E}\left[U(\mathfrak{X}_{\tau^n}^{n,x}+H_{\tau^n}^n(\pmb{\theta}_n,\pmb{e}_n)\big|\mathcal{F}_{\theta_n}\right]\leq U(x+Y_{\theta_n}^n(\pmb{\theta}_n,\pmb{e}_n))\notag\\
&=&\mathbb{E}\left[U(\hat{\mathfrak{X}}_{\hat\tau^n}^{n,x}+H_{\hat\tau^n}^n(\pmb{\theta}_n,\pmb{e}_n))\big|\mathcal{F}_{\theta_n}\right]\leq V^n(x,\pmb{\theta}_n,\pmb{e}_n),\notag
\end{eqnarray}
which implies \eqref{2rr} for $k=n$ and the optimality of $(\hat\pi^n,\hat\tau^n)$. We can show \eqref{2rr} and the optimality of $(\hat\pi^k,\hat\tau^k)$ for $k=0,\dotso,n-1$, similarly using \eqref{4-b}.
\end{proof}

\subsection{Indifference selling price}

 In this sub-section, we consider the problem \eqref{3-h}, and \thref{theorem 4-6} is the main result.
\begin{definition}
(Admissible trading strategy) A trading strategy $(\pi,\tau)\sim(\pi^k,\tau^k)_{k=0}^n$ is admissible, if for any $(\pmb{\theta}_k,\pmb{e}_k)\in\Delta_k(T)\times E^k$, under the control $\pi^k$,
\begin{itemize}
\item[(a)] $\displaystyle \int_{\theta_k}^T|\pi_t^k(\pmb{\theta}_k,\pmb{e}_k)'b_t^k(\pmb{\theta}_k,\pmb{e}_k)|dt+\int_{\theta_k}^T|\pi_t^k(\pmb{\theta}_k,\pmb{e}_k)'\sigma_t^k(\pmb{\theta}_k,\pmb{e}_k)|^2dt<\infty,  \ \ \text{a.s.,}\ \  k=0,\dotso,n,$
\item[(b)] the family $\displaystyle \Big\{ U(\mathfrak{X}_{\tau}^k(\pmb{\theta}_k,\pmb{e}_k)):\ \tau \text{ is any }\mathbb{F}-\text{stopping time valued in }[\theta_k,T]\Big\}$ is uniformly integrable, i.e., $U(\mathfrak{X}^k(\pmb{\theta}_k,\pmb{e}_k))$ is of class (D), for $k=0,\dotso,n$,
\item[(c)] $\displaystyle \mathbb{E}\left[\int_{\theta_k}^T\int_E(-U)\left(\mathfrak{X}_s^k(\pmb{\theta}_k,\pmb{e}_k)+\pi_s^k(\pmb{\theta}_k,\pmb{e}_k)'\gamma_s^k(\pmb{\theta}_k,\pmb{e}_k,e)\right)\eta(de)ds\right]<\infty$, \ for $k=0,$ $\dotso, n-1.$
\end{itemize}
\end{definition}
\begin{remark}
Unlike in Definition 4.1, the admissible trading strategy here is in fact independent of stopping times. This is because the investor cannot choose when to stop.  
\end{remark}

\subsubsection{Backward recursive system of RBSDEs}
We decompose $\mathfrak{V}^0$ in \eqref{3-h} into a backward induction as before:
\begin{eqnarray}
\label{5-a}&&\mathfrak{\mathfrak{V}}^n(x,\pmb{\theta}_n,\pmb{e}_n)= \esssup_{\pi^n\in\mathcal{A}^n}  \essinf_{\tau^n\in\mathcal{T}^n}  \mathbb{E}\Big[U(\mathfrak{X}^{n,x}_{\tau^n}-\mathcal{H}_{\tau^n}^n)|\mathcal{F}_{\theta_n}\Big],\ (\pmb\theta_n,\pmb e_n)\in\Delta_n(T)\times E^n,\\
\label{5-b}&&\mathfrak{\mathfrak{V}}^k(x,\pmb{\theta}_k,\pmb{e}_k)=\esssup_{\pi^k\in\mathcal{A}^k} \essinf_{\tau^n\in\mathcal{T}^k}  \mathbb{E}\Big[U(\mathfrak{X}^{k,x}_{\tau^k}-\mathcal{H}_{\tau^k}^k)\\
&&\hspace{1cm}+\int_{\theta_k}^{\tau^k}\int_E\mathfrak{\mathfrak{V}}^{k+1}\Big(\mathfrak{X}_{\theta_{k+1}}^{k,x}+\pi_{\theta_{k+1}}^k\cdot\gamma_{\theta_{k+1}}^k(e_{k+1}),\pmb{\theta}_{k+1},\pmb{e}_{k+1}\Big)\eta(de_{k+1})d\theta_{k+1}|\mathcal{F}_{\theta_k}\Big],\notag
\end{eqnarray}
$(\pmb\theta_k,\pmb e_k)\in\Delta_k(T)\times E^k$, for $k=0,\dotso,n-1$, where 
$$\mathcal{H}^k=R^k+\frac{1}{p}\ln\alpha^k,\ \ k=0,\dotso,n.$$

Consider
\begin{equation}\label{3c}\mathfrak{V}^k(x,\pmb{\theta}_k,\pmb{e}_k)=U\big(x-\mathcal{Y}^k_{\theta_k}(\pmb{\theta}_k,\pmb{e}_k)\big),\text{{ }{ }}k=0,\dotso,n,\notag\end{equation}
where $\{\mathcal{Y}^k_t(\pmb{\theta}_k,\pmb{e}_k)\}_{k=0}^n$ satisfies the RBSDE \textbf{EQ}$(\mathfrak{\mathcal{H}}^k(\pmb\theta_k,\pmb e_k),\mathfrak{f}^k)_{\theta_k\leq t\leq T}$, with $\mathfrak{f}^k$ defined as
\begin{eqnarray}\label{4-ii} \mathfrak{f}^k(t,y,z,\pmb{\theta}_k,\pmb{e}_k)=\inf_{\pi\in A^k}\mathfrak{g}^k(\pi,t,y,z,\pmb{\theta}_k,\pmb{e}_k),\notag\end{eqnarray}
where
\begin{eqnarray}
\label{2vvv} &&\mathfrak{g}^k(\pi,t,y,z,\pmb{\theta}_k,\pmb{e}_k)=\frac{p}{2}\left| z-\sigma_t^k(\pmb{\theta}_k,\pmb{e}_k)'\pi\right|^2-b_t^k(\pmb{\theta}_k,\pmb{e}_k)'\pi\notag\\
&&\hspace{1.8cm}+\frac{1}{p}U(y)\displaystyle\int_EU\left(\pi\cdot\gamma_t^k(\pmb{\theta}_k,\pmb{e}_k,e)-\mathcal{Y}_t^{k+1}(\pmb{\theta}_k,t,\pmb{e}_k,e)\right)\eta(de)\notag
\end{eqnarray}
for $k=0,\dotso,n-1$, and 
$$\label{2www}\mathfrak{g}^n(\pi,t,y,z,\pmb{\theta}_n,\pmb{e}_n)=\frac{p}{2}\left| z-\sigma_t^n(\pmb{\theta}_n,\pmb{e}_n)'\pi\right|^2-b_t^n(\pmb{\theta}_n,\pmb{e}_n)'\pi.$$

\subsubsection{Existence to the recursive system of RBSDEs}
We will make the same boundedness assumption as \textbf{(HB)} in Section~\ref{sec:exsrcsys} except that we will replace $H^k$ with $\mathcal{H}^k$. Let us denote this assumption by (\textbf{HB'}).
\begin{theorem}
Under \textbf{(HB')}, there exists a solution $(\mathcal{Y}^k,\mathcal{Z}^k,\mathcal{K}^k)_{k=0}^n\in\mathop{\prod}\limits_{k=0}^n\mathcal{S}_c^\infty(\Delta_k(T),E^k)\times\mathbf{\bf L}_W^2$ $(\Delta_k(T),E^k)\times\mathbf{A}(\Delta_k(T),E^k) $ to the recursive system of indexed RBSDEs \textbf{EQ}$(\mathcal{H}^k,\mathfrak{f}^k),\ k=0,\dotso,n$.
\end{theorem}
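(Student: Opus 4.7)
The plan is to mirror the proof of Theorem~\ref{theorem 6-3}, performing a backward induction on $k = n, n-1, \dotso, 0$. Two features of \textbf{EQ} versus \textbf{eq} call for a dual treatment. First, the signs in the RBSDE are flipped, so larger generator still corresponds to larger $\mathcal{Y}$. Second, $\mathfrak{g}^k$ carries the coefficient $\tfrac{1}{p}U(y) = -\tfrac{1}{p}e^{-py}$ instead of $\tfrac{1}{p}U(-y)$, so the generator now explodes as $y\to-\infty$ rather than as $y\to+\infty$; consequently the truncation and the comparison argument need to be reflected around zero.

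For the base case $k = n$, the generator $\mathfrak{f}^n(t,z)$ is independent of $y$ and, under (HB'), satisfies $|\mathfrak{f}^n(t,z)| \leq C(1+|z|^2)$. Applying Theorem~1 of \cite{Kobylanski1} to \textbf{EQ}$(\mathcal{H}^n,\mathfrak{f}^n)$ with the bounded obstacle $\mathcal{H}^n$ yields $(\mathcal{Y}^n,\mathcal{Z}^n,\mathcal{K}^n)\in\mathcal{S}_c^\infty[\theta_n,T]\times\mathbf{L}_W^2[\theta_n,T]\times\mathbf{A}[\theta_n,T]$ with $|\mathcal{Y}^n|\le C$, and joint measurability in $(\pmb\theta_n,\pmb e_n)$ follows by BSDE approximation exactly as in the proof of Theorem~\ref{theorem 6-3}.

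For the inductive step $k<n$, assume $\mathcal{Y}^{k+1}$ is essentially bounded. Rewriting
\begin{equation*}
\mathfrak{g}^k(\pi,t,y,z) = -\lambda_t^k\cdot z - \tfrac{1}{2p}|\lambda_t^k|^2 + \tfrac{p}{2}\bigl|z+\tfrac{1}{p}\lambda_t^k-(\sigma_t^k)'\pi\bigr|^2 + \tfrac{1}{p}U(y)\int_E U\bigl(\pi\cdot\gamma_t^k(e)-\mathcal{Y}_t^{k+1}\bigr)\eta(de),
\end{equation*}
the last term is the product of two negative quantities divided by $p>0$, hence nonnegative. To control its blow-up as $y\to-\infty$, I would truncate from below by setting $\mathfrak{f}^{k,N}(t,y,z):=\inf_{\pi\in A^k}\mathfrak{g}^k(\pi,t,(-N)\vee y,z)$, test at $\pi=0$ to get $|\mathfrak{f}^{k,N}|\le C_N(1+|z|^2)$, and apply Kobylanski's theorem to produce $(\mathcal{Y}^{k,N},\mathcal{Z}^{k,N},\mathcal{K}^{k,N})$. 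To remove the truncation, I would establish a uniform a priori lower bound on $\mathcal{Y}^{k,N}$ independent of $N$ and of $(\pmb\theta_k,\pmb e_k)$: dropping the two nonnegative terms in $\mathfrak{g}^k$ gives the Lipschitz generator $\bar{\mathfrak{f}}^k(t,z):=-\lambda_t^k\cdot z-\tfrac{1}{2p}|\lambda_t^k|^2\le\mathfrak{f}^{k,N}$, so Theorem~5.2 of \cite{El1} furnishes a bounded solution $\bar{\mathcal{Y}}^k$ to \textbf{EQ}$(\mathcal{H}^k,\bar{\mathfrak{f}}^k)$ with $|\bar{\mathcal{Y}}^k|\le C$, and the RBSDE comparison principle in its \textbf{EQ} form (Lemma~2.1 of \cite{Kobylanski1}) gives $\mathcal{Y}^{k,N}\ge\bar{\mathcal{Y}}^k\ge-C$. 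Choosing $N\ge C$ deactivates the truncation, so $(\mathcal{Y}^{k,N},\mathcal{Z}^{k,N},\mathcal{K}^{k,N})$ solves the original \textbf{EQ}$(\mathcal{H}^k,\mathfrak{f}^k)$ and the induction closes.

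The main obstacle is precisely this uniform lower bound on $\mathcal{Y}^{k,N}$. It rests on the structural observation that $\tfrac{1}{p}U(y)\int_E U(\cdot)\eta(de)\ge 0$ because $U<0$, which is what permits dominating $\mathfrak{f}^{k,N}$ from below by a Lipschitz generator $\bar{\mathfrak{f}}^k$ whose RBSDE produces a bound independent of $N$ and of the indices. Once this is in hand, the rest of the argument is a routine relabeling of Theorem~\ref{theorem 6-3} suited to the \textbf{EQ} form.
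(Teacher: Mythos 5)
Your proposal is correct, but for the crucial step it takes a different (and heavier) route than the paper. Where you establish the uniform lower bound on $\mathcal{Y}^{k,N}$ by dominating $\mathfrak{f}^{k,N}$ from below with the Lipschitz generator $\bar{\mathfrak{f}}^k(t,z)=-\lambda_t^k\cdot z-\tfrac{1}{2p}|\lambda_t^k|^2$, invoking Theorem~5.2 of \cite{El1} for a bounded solution and then a comparison principle for quadratic RBSDEs to get $\mathcal{Y}^{k,N}\geq\bar{\mathcal{Y}}^k\geq -C$, the paper observes that no comparison is needed at all: the \textbf{EQ} form has the lower reflection constraint $\mathcal{Y}^{k,N}_t\geq\mathcal{H}^k_t$, and under \textbf{(HB')} the obstacle satisfies $\mathcal{H}^k\geq -C$ uniformly in $N$ and $(\pmb\theta_k,\pmb e_k)$, so the bound $\mathcal{Y}^{k,N}\geq -C$ is immediate from the definition of a solution. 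This is precisely the structural asymmetry between the two systems: in the buying case (Theorem~\ref{theorem 6-3}) the exponential term blows up as $y\to+\infty$, a direction the lower obstacle does not control, so the detour through $\bar Y^k$ and the comparison lemma of \cite{Kobylanski1} is genuinely necessary there; in the selling case the blow-up is as $y\to-\infty$, exactly the direction the obstacle controls, which is what lets the paper dispense with the auxiliary RBSDE. Your argument does go through (the sign bookkeeping is right: in the \textbf{EQ} form a larger generator yields a larger solution, and $\bar{\mathfrak{f}}^k\leq\mathfrak{f}^{k,N}$ since the dropped terms are nonnegative), but it costs you an extra existence result and a comparison theorem for reflected equations with a quadratic generator, whereas the paper's observation is one line and requires nothing beyond \textbf{(HB')}. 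Everything else in your proposal (base case, truncation at $(-N)\vee y$, the bound $|\mathfrak{f}^{k,N}|\leq C_N(1+|z|^2)$, measurability in the indices, and removing the truncation for large $N$) matches the paper's proof.
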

\begin{proof} We prove the result by a backward induction on $k=0,\dotso,n$

For $k=n$. Using the same argument as in the proof of \thref{theorem 6-3},  we can show that there exists a solution $(\mathcal{Y}^n,\mathcal{Z}^n,\mathcal{K}^n)\in\mathcal{S}_c^\infty(\Delta_n(T),E^n)\times\mathbf{\bf L}_W^2(\Delta_n(T),E^n)\times\mathbf{A}(\Delta_n(T),E^n)$ to \textbf{EQ}$(\mathcal{H}^n,\mathfrak{f}^n)$.
  
For $k\in\{0,1,\dotso,n-1\}$. Assume there exists $(\mathcal{Y}^{k+1},\mathcal{Z}^{k+1},\mathcal{K}^{k+1})\in\mathcal{S}_c^\infty(\Delta_{k+1}(T),E^{k+1})\times\mathbf{\bf L}_W^2$ $(\Delta_{k+1}(T),E^{k+1})\times\mathbf{A}(\Delta_{k+1}(T),E^{k+1})$ satisfying \textbf{EQ}$(\mathcal{H}^{k+1},\mathfrak{f}^{k+1})$. Consider the truncated generator
$$\mathfrak{f}^{k,N}(t,y,z,\pmb{\theta}_k,\pmb{e}_k)=\inf_{\pi\in A^k}\mathfrak{g}^k(\pi,t,-N\vee y,z,\pmb{\theta}_k,\pmb{e}_k).$$
Then there exists some constant $C_N>0$, independent of $(\pmb\theta_k,\pmb e_k)$, such that $|\mathfrak{f}^{k,N}|\leq C_N(1+z^2)$. Hence, there exists a solution $(\mathcal{Y}^{k,N}, \mathcal{Z}^{k,N},\mathcal{K}^{k,N})\in\mathcal{S}_c^\infty(\Delta_k(T),E^k)\times\mathbf{\bf L}_W^2(\Delta_k(T),E^k)\times\mathbf{A}(\Delta_k(T),E^k)$ to \textbf{EQ}$(\mathcal{H}^k,\mathfrak{f}^{k,N})$. By Assumption \textbf{(HB')}, $\mathcal{Y}^{k,N}\geq \mathcal{H}^k\geq -C$, where $C>0$ is a constant independent of $N$ and $(\pmb\theta_k,\pmb e_k)$. Therefore, for $N$ large enough, $(\mathcal{Y}^{k,N},\mathcal{Z}^{k,N},\mathcal{K}^{k,N})$ also solves \textbf{EQ}$(\mathcal{H}^k,\mathfrak{f}^k)$.
\end{proof}

\subsubsection{RBSDE characterization by verification theorem}
\begin{theorem}\label{theorem 4-6}
The value functions $(\mathfrak{V}^k)_{k=0}^n$ defined in \eqref{5-a} and \eqref{5-b}, are given by
\begin{equation}\label{3i} \mathfrak{V}^k(x,\pmb{\theta}_k,\pmb{e}_k)=U(x-\mathcal{Y}_t^k(\pmb{\theta}_k,\pmb{e}_k)),\end{equation}
for $\forall x\in\mathbb{R},\ (\pmb{\theta}_k,\pmb{e}_k)\in\Delta_k\times E^k$, where $(\mathcal{Y}^{k},\mathcal{Z}^{k},\mathcal{K}^{k})_{k=0}^n\in\mathop{\prod}\limits_{k=0}^{n}\mathcal{S}_c^\infty$ $(\Delta_{k}(T),E^{k})\times\mathbf{\bf L}_W^2(\Delta_{k}(T),E^{k})\times\mathbf{A}(\Delta_{k}(T),E^{k})$ is a solution of the system of RBSDEs \textbf{EQ}$(\mathcal{H}^k,\mathfrak{f}^k),\ k=0,\dotso,n$. Moreover, there exists a saddle point $(\pi,\tau)\sim(\hat\pi^k,\hat\tau^k)_{k=0}^n$ described by:
\begin{equation}\label{2sss}\hat\pi_t^k(\pmb{\theta}_k,\pmb{e}_k)\in\argmin_{\pi\in A^k}\mathfrak{g}^k\big(\pi,t,\mathcal{Y}_t^k(\pmb\theta_k,\pmb e_k),\mathcal{Z}_t^k(\pmb\theta_k,\pmb e_k),\pmb{\theta}_k,\pmb{e}_k\big),\notag\end{equation}
for $t\in[\theta_k,T]$, and
\begin{equation}\label{3l}\hat\tau^k(\pmb{\theta}_k,\pmb{e}_k):=\inf\left\{t\geq \theta_k: \ \mathcal{Y}_t^k(\pmb{\theta}_k,\pmb{e}_k)=\mathcal{H}_t^k(\pmb{\theta}_k,\pmb{e}_k)\right\},\end{equation}
for $(\pmb{\theta}_k,\pmb{e}_k)\in\Delta_k(T)\times E^k$, a.s., $k=0,\dotso,n$. More specifically, for any admissible trading strategy $(\pi,\tau)\sim(\pi^k,\tau^k)_{k=0}^n$, 
$$\mathbb{E}\left[U(\mathfrak{X}_{\hat\tau^n}^{n,x}-\mathcal{H}_{\hat\tau^n}^n)\big|\mathcal{F}_{\theta_n}\right]\leq\mathbb{E}\left[U(\hat{\mathfrak{X}}_{\hat\tau^n}^{n,x}-\mathcal{H}_{\hat\tau^n}^n)\big|\mathcal{F}_{\theta_n}\right]\leq\mathbb{E}\left[U(\hat{\mathfrak{X}}_{\tau^n}^{n,x}-\mathcal{H}_{\tau^n}^n)\big|\mathcal{F}_{\theta_n}\right],$$
and similar inequalities hold for $k=0,\dotso,n-1$, where $\hat{\mathfrak{X}}^{k,x}$ is the wealth process under control $\hat\pi^k,\ k=0,\dotso,n$.
\end{theorem}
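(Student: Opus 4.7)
My plan follows the verification structure of Theorem 4.4, with one genuinely new feature: the drift of the Itô-auxiliary process now has terms of opposite signs, and I exploit this dichotomy to extract the two sides of the saddle simultaneously. The preparatory results transfer essentially verbatim: the reverse-Hölder/BMO argument from Step 2 of Theorem 4.4, applied to $\exp(-q\mathcal{Y}^k)$ with $q>p$, shows that $\int_{\theta_k}^{\cdot}\mathcal{Z}^k\,dW$ is BMO; the quadratic coercivity of $\mathfrak{g}^k$ in the transformed control $\bar\pi=(\sigma^k)'\pi$ together with a Wagner-type measurable-selection argument produces $\hat\pi^k\in\mathcal{P}_{\mathbb{F}}(\Delta_k,E^k)$ realizing the infimum in $\mathfrak{f}^k(\cdot,t,\mathcal{Y}^k_t,\mathcal{Z}^k_t)$; and the bound $|(\sigma^k)'\hat\pi^k|\le C(1+|\mathcal{Z}^k|)$ together with the Skorohod flatness of $\mathcal{K}^k$ on $[\theta_k,\hat\tau^k)$ yields admissibility of $(\hat\pi^k,\hat\tau^k)$ in the new sense.

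The core computation is the Itô decomposition of
\[
\xi^k_t := U\!\bigl(\mathfrak{X}^{k,x}_t - \mathcal{Y}^k_t\bigr) + \int_{\theta_k}^{t}\!\!\int_E U\!\bigl(\mathfrak{X}^{k,x}_r + \pi^k_r\!\cdot\!\gamma^k_r(\pmb\theta_k,\pmb e_k,e) - \mathcal{Y}^{k+1}_r(\pmb\theta_k,r,\pmb e_k,e)\bigr)\eta(de)\,dr,
\]
evaluated under an admissible $\pi^k$. Writing $\Pi:=\mathfrak{X}^{k,x}-\mathcal{Y}^k$ and using the RBSDE dynamics $d\mathcal{Y}^k=-\mathfrak{f}^k\,dt+\mathcal{Z}^k\,dW-d\mathcal{K}^k$, the same computation as in the proof of Theorem 4.4 (with the sign flip induced by the $-\mathcal{Y}$ in $\Pi$ in place of $+Y$) yields
\[
d\xi^k_t \;=\; pU(\Pi_t)\Bigl[\bigl(\mathfrak{g}^k(\pi^k_t,t,\mathcal{Y}^k_t,\mathcal{Z}^k_t) - \mathfrak{f}^k\bigr)\,dt \;-\; d\mathcal{K}^k_t \;-\; \bigl((\sigma^k_t)'\pi^k_t - \mathcal{Z}^k_t\bigr)\,dW_t\Bigr].
\]
Because $U<0$ and $\mathfrak{g}^k\ge\mathfrak{f}^k=\inf_\pi\mathfrak{g}^k$, the $dt$-term is nonpositive; because $U<0$ and $d\mathcal{K}^k\ge0$, the $d\mathcal{K}^k$-term is nonnegative. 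At $\pi=\hat\pi^k$ the first vanishes, while on $[\theta_k,\hat\tau^k]$ the Skorohod condition $\int(\mathcal{Y}^k-\mathcal{H}^k)\,d\mathcal{K}^k=0$ makes the second vanish.

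The verification is then a backward induction on $k$. For the upper bound $\mathfrak{V}^k\le U(x-\mathcal{Y}^k_{\theta_k})$, I fix an arbitrary admissible $\pi^k$ and stop at $\hat\tau^k$: on $[\theta_k,\hat\tau^k]$ only the nonpositive drift survives, so $\xi^k_{\cdot\wedge\hat\tau^k}$ is a local supermartingale, upgraded to a supermartingale by the class-$(D)$/BMO argument of Step 2–3 of Theorem 4.4. Taking conditional expectation, using $\mathcal{Y}^k_{\hat\tau^k}=\mathcal{H}^k_{\hat\tau^k}$ to identify the terminal $U(\Pi_{\hat\tau^k})=U(\mathfrak{X}^\pi_{\hat\tau^k}-\mathcal{H}^k_{\hat\tau^k})$, and applying the induction hypothesis $\mathfrak{V}^{k+1}(\cdot,\pmb\theta_{k+1},\pmb e_{k+1})=U(\cdot-\mathcal{Y}^{k+1}_{\theta_{k+1}}(\pmb\theta_{k+1},\pmb e_{k+1}))$ to recognize the integrand of $\xi^k$ as $\mathfrak{V}^{k+1}(\mathfrak{X}+\pi\gamma,\cdot)$, I obtain the recursive upper bound against (5-a)–(5-b). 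For the lower bound, I fix $\pi^k=\hat\pi^k$ and let $\tau^k$ be arbitrary: the $\mathfrak{g}-\mathfrak{f}$ term vanishes and the remaining $-pU\,d\mathcal{K}^k\ge 0$ makes $\xi^k$ a submartingale on $[\theta_k,T]$; then $\mathbb{E}[\xi^k_{\tau^k}\mid\mathcal{F}_{\theta_k}]\ge U(x-\mathcal{Y}^k_{\theta_k})$ combined with $\mathcal{Y}^k\ge\mathcal{H}^k$, monotonicity of $U$, and the induction hypothesis gives the matching recursive lower bound. The two bounds force $\mathfrak{V}^k=U(x-\mathcal{Y}^k_{\theta_k})$, and specializing to $(\pi,\hat\tau^k)$ and $(\hat\pi^k,\tau^k)$ in the two estimates—noting that at $(\hat\pi^k,\hat\tau^k)$ both drifts vanish so $\xi^k$ becomes a true martingale producing equality—delivers the saddle-point inequalities precisely as stated.

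The main obstacle is this sign mismatch: unlike Theorem 4.4, where both drift contributions had the same sign and the process was directly a supermartingale suitable for the sup–sup problem, here only half of the drift is of the "correct" sign at any admissible pair $(\pi,\tau)$, and the saddle structure can be extracted only by aligning the stopping region with the control choice—$\hat\tau^k$ kills $d\mathcal{K}^k$ to drive the upper bound, while $\hat\pi^k$ kills $\mathfrak{g}^k-\mathfrak{f}^k$ to drive the lower bound. Once this is recognized, all the remaining technical work (BMO estimates for $\mathcal{Z}^k$, class-$(D)$ upgrades from local to true (super/sub)martingale, measurable selection for $\hat\pi^k$, and the inductive substitution of $\mathfrak{V}^{k+1}$ into the integrand of $\xi^k$) reduces to minor variants of arguments already developed for Theorem 4.4.
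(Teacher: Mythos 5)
Your proposal is correct and follows essentially the same route as the paper's own proof: Step 1 (arbitrary $\pi^k$, stop at $\hat\tau^k$, supermartingale of $\upxi^k_{\cdot\wedge\hat\tau^k}$ giving $\mathfrak{V}^k\le U(x-\mathcal{Y}^k_{\theta_k})$), Step 4 (under $\hat\pi^k$ the process is a submartingale for arbitrary $\tau^k$ and a martingale when also stopped at $\hat\tau^k$, giving the reverse bound and the saddle inequalities), with the BMO estimate, measurable selection and admissibility carried over from Theorem~\ref{theorem 4-4} and a backward induction in $k$. The only small imprecision is your appeal to Skorohod flatness of $\mathcal{K}^k$ on $[\theta_k,\hat\tau^k)$ for admissibility of $\hat\pi^k$: in the selling case admissibility is required on all of $[\theta_k,T]$, and the class-(D) bound comes instead from the sign $-p\mathcal{K}^k_t\le 0$ in the exponential factor, a minor variant of the Theorem~\ref{theorem 4-4} argument.
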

\begin{proof}
We follow the steps in the proof of \thref{theorem 4-4}. \\
\textbf{Step 1:} \textbf{We will show for $(\pmb\theta_k,\pmb e_k)\in\Delta_k(T)\times E^k$,}
\begin{equation}\label{3m}U(x-\mathcal{Y}^k_{\theta_k}(\pmb{\theta}_k,\pmb{e}_k,\nu^k))\geq \mathfrak{V}^k(x,\pmb{\theta}_k,\pmb{e}_k),\ \ \ k=0,\dotso,n.\end{equation}
Let  $(\mathcal{Y}^{k},\mathcal{Z}^{k},\mathcal{K}^{k})\in\mathcal{S}_c^\infty(\Delta_{k}(T),E^{k})\times\mathbf{\bf L}_W^2(\Delta_{k}(T),E^{k})\times\mathbf{A}(\Delta_{k}(T),E^{k})$ be a solution of the RBSDE system. For $\nu^k\in\mathcal{A}^k$, \ $\forall x\in\mathbb{R}, \ (\pmb{\theta}_k,\pmb{e}_k)\in\Delta_k(T)\times E^k$, define $(\upxi^k)_{k=0}^n$ as:
\begin{eqnarray}
\label{5-aa}\upxi_t^k(x,\pmb{\theta}_k,\pmb{e}_k,\nu^k)&:=&U\big(\mathfrak{X}_t^{k,x}-\mathcal{Y}_t^k(\pmb{\theta}_k,\pmb{e}_k)\big)\\
&+&\int_{\theta_k}^t\int_EU\left(\mathfrak{X}_r^{k,x}+\nu_r^k\cdot\gamma_r^k(\pmb{\theta}_{k},e_{k},e)-\mathcal{Y}_r^{k+1}(\pmb{e}_{k},r,\pmb{e}_k,e)\right)\eta(de)dr,\notag
\end{eqnarray}
for $k=0,\dotso,n-1$, and
\begin{equation}\label{5-bb}\upxi_t^n(x,\pmb{\theta}_n,\pmb{e}_n,\nu^n):=U\big(\mathfrak{X}_t^{n,x}-\mathcal{Y}_t^n(\pmb{\theta}_n,\pmb{e}_n)\big).\end{equation}
\noindent Applying It$\hat{\text{o}}$'s formula, we obtain, for $k=0,\dotso,n$,
\begin{eqnarray}
\label{3n}&&d\upxi_t^k(x,\pmb{\theta}_k,\pmb{e}_k,\nu^n)=pU(\mathfrak{X}_t^{k,x}-\mathcal{Y}_t^k(\pmb{\theta}_k,\pmb{e}_k)) \Big[\big(-\mathfrak{f}^k(t,\mathcal{Y}_t^k,\mathcal{Z}_t^k,\pmb{\theta}_k,\pmb{e}_k)\notag\\
&&\hspace{2cm}+\mathfrak{g}^k(\nu_t^k,t,\mathcal{Y}_t^k,\mathcal{Z}_t^k,\pmb{\theta}_k,\pmb{e}_k)\big)dt-d\mathcal{K}_t^k(\pmb{\theta}_k,\pmb{e}_k)+(\mathcal{Z}_t^k-\sigma_t^k(\pmb{\theta}_k,\pmb{e}_k)'\nu_t^k)\cdot dW_t\Big],\notag
\end{eqnarray}
Define $\hat\tau^k$ as in \eqref{3l}, then $d\mathcal{K}_{t\wedge\hat\tau^k}^k=0, \ \theta_k\leq t\leq T$. Therefore, $(\upxi^k_{t\wedge\hat\tau^k})_{\theta_k\leq t\leq T}$ is a local super-martingale. By introducing a localizing sequence of stopping times $(\rho_m)_m$, and then letting $m\rightarrow\infty$, we can show for $k=0,\dotso,n,$
\begin{equation}\label{3o}\upxi^k_{t\wedge\tilde\tau^k}\geq\mathbb{E}\left[\upxi^k_{s\wedge\tilde\tau^k}\Big|\mathcal{F}_t\right], \ \ \theta_k\leq t\leq s\leq T.\notag\end{equation}
In particular,
\begin{equation}\label{5ab}U\big(x-\mathcal{Y}^n_{\theta_n}(\pmb{\theta}_n,\pmb{e}_n)\big)=\upxi^n_{\theta_n}\geq\mathbb{E}\left[\upxi^n_{\tilde\tau^n}\Big|\mathcal{F}_{\theta_n}\right]=\mathbb{E}\Big[U(\mathfrak{X}^{n,x}_{\tilde\tau^n}-\mathcal{H}^n_{\tilde\tau^n})\Big|\mathcal{F}_{\theta_n}\Big].\end{equation}
Hence, 
\begin{equation}\label{3q}U\big(x-\mathcal{Y}^n_{\theta_k}(\pmb{\theta}_n,\pmb{e}_n)\big)\geq \essinf_{\tau^n\in\mathcal{T}^n} \mathbb{E}[U(\mathfrak{X}^{n,x}_{\tau^n}-\mathcal{H}^n_{\tau^n})|\mathcal{F}_{\theta_k}].\notag\end{equation}
for any $\nu^n\in\mathcal{A}^n$. So \eqref{3m} follows for $k=n$. Similarly, it holds for $k=0,\dotso,n-1$.
\medskip

\noindent\textbf{Steps 2\& 3:} Similar to the proof of \thref{theorem 4-4}. 
\medskip

\noindent\textbf{Step 4:} \textbf{We will show \eqref{3i} holds and $(\pi,\tau)\sim(\hat\pi^k,\hat\tau^k)_{k=0}^n$ is a saddle point.} Under the admissible control $\hat\pi^k$, the dynamics of ($\upxi^k)_k$ defined in \eqref{5-aa} and \eqref{5-bb} are given by 
\begin{equation}\label{3r}d\upxi_t^k(x,\pmb{\theta},\pmb{e},\hat\pi^k)=pU(\mathfrak{X}_t^{k,x}-\mathcal{Y}_t^k)\Big[-d\mathcal{K}_t^k(\pmb{\theta}_k,\pmb{e}_k)+(\mathcal{Z}_t^k-\sigma_t^k(\pmb{\theta}_k,\pmb{e}_k)'\nu_t^k)\cdot dW_t\Big],\notag\end{equation}
for $k=0,\dotso,n$. By the uniform integrality of $\upxi_t^k$, we know $\upxi_t^k$ is a sub-martingale. Consider when $k=n$.  For any $\mathbb{F}$-stopping time $\tau^n$ valued in $[\theta_n,T]$,
\begin{equation}\label{3u}U(x-\mathcal{Y}^n_{\theta_n})\leq \mathbb{E}[U(\hat{\mathfrak{X}}^{n,x}_{\tau^n}-\mathcal{Y}^n_{\tau^n})|\mathcal{F}_{\theta_n}]\leq\mathbb{E}[U(\hat{\mathfrak{X}}^{n,x}_{\tau^n}-\mathcal{H}^n_{\tau^n})|\mathcal{F}_{\theta_n}],\end{equation}
Therefore, we have
\begin{equation}\label{3s}U(x-\mathcal{Y}_{\theta_n}^n)\leq\essinf_{\tau^n\in\mathcal{T}^n}\mathbb{E}\Big[U(\hat{\mathfrak{X}}_{\tau^n}^{n,x}-\mathcal{H}_{\tau^n}^n)\big|\mathcal{F}_{\theta_n}\Big]\leq\esssup_{\pi^n\in\mathcal{A}^n}\essinf_{\tau^n\in\mathcal{T}^n}\mathbb{E}\Big[U(\mathfrak{X}_{\tau^n}^{n,x}-\mathcal{H}_{\tau^n}^n)\big|\mathcal{F}_{\theta_n}\Big].\notag\end{equation}
Now, the last equation along with \eqref{3m} implies that \eqref{3i} holds for $k=n$. 

By the definition and admissibility of $\hat\pi^n$, we can show that under control $\hat\pi^n$, $\upxi^n_{t\wedge \hat\tau^n}$ is a martingale. Thus from \eqref{3u} we have
\begin{equation}\label{3t} \begin{split}\mathbb{E}\left[U(\hat{\mathfrak{X}}_{\hat\tau^n}^{n,x}-\mathcal{H}_{\hat\tau^n}^n)\big|\mathcal{F}_{\theta_n}\right]=\mathbb{E}\left[U(\hat{\mathfrak{X}}_{\hat\tau^n}^{n,x}-\mathcal{Y}_{\hat\tau^n}^n)\big|\mathcal{F}_{\theta_n}\right]=U(x-\mathcal{Y}_{\theta_n}^n)\\
\leq \mathbb{E}\left[U(\hat{\mathfrak{X}}_{\tau^n}^{n,x}-\mathcal{Y}_{\tau^n}^n)\big|\mathcal{F}_{\theta_n}\right]\leq\mathbb{E}\left[U(\hat{\mathfrak{X}}_{\tau^n}^{n,x}-\mathcal{H}_{\tau^n}^n)\big|\mathcal{F}_{\theta_n}\right].\end{split}\notag\end{equation}
And from \eqref{5ab} we have
\begin{equation}\label{v}\begin{split} \mathbb{E}\left[U(\hat{\mathfrak{X}}_{\hat\tau^n}^{n,x}-\mathcal{H}_{\hat\tau^n}^n)\big|\mathcal{F}_{\theta_n}\right]=\mathbb{E}\left[U(\hat{\mathfrak{X}}_{\hat\tau^n}^{n,x}-\mathcal{Y}_{\hat\tau^n}^n)\big|\mathcal{F}_{\theta_n}\right]=U(x-\mathcal{Y}_{\theta_n}^n)\\
\geq\mathbb{E}\left[U(\mathfrak{X}_{\hat\tau^n}^{n,x}-\mathcal{Y}_{\hat\tau^n}^n)\big|\mathcal{F}_{\theta_n}\right]=\mathbb{E}\left[U(\mathfrak{X}_{\hat\tau^n}^{n,x}-\mathcal{H}_{\hat\tau^n}^n)\big|\mathcal{F}_{\theta_n}\right] \end{split}\notag\end{equation}
Thus, $(\hat\pi^n,\hat\tau^n)$ is a saddle point. Similarly, it can be shown that the corresponding conclusions hold for $k=0,\dotso,n-1$ using \eqref{5-b}.
\end{proof}

\bibliographystyle{siam}
\bibliography{ref}

\end{document}